\numberwithin{equation}{section}
\numberwithin{figure}{section}
\theoremstyle{plain}
\newtheorem{thm}{\protect\theoremname}
  \theoremstyle{definition}
  \newtheorem{defn}[thm]{\protect\definitionname}
  \theoremstyle{definition}
  \newtheorem{problem}[thm]{\protect\problemname}
  \theoremstyle{plain}
  \newtheorem{prop}[thm]{\protect\propositionname}
  \theoremstyle{plain}
  \theoremstyle{plain}
  \newtheorem{cor}[thm]{\protect\corollaryname}
  \theoremstyle{remark}
  \newtheorem{rem}[thm]{\protect\remarkname}
  \theoremstyle{plain}
  \newtheorem{observation}[thm]{\protect\observationname}
  \theoremstyle{definition}
  \theoremstyle{remark}
  \newtheorem{claim}[thm]{\protect\claimname}
  \theoremstyle{plain}
\numberwithin{thm}{section}
  \providecommand{\claimname}{Claim}
  \providecommand{\corollaryname}{Corollary}
  \providecommand{\definitionname}{Definition}
  \providecommand{\examplename}{Example}
  \providecommand{\factname}{Fact}
  \providecommand{\lemmaname}{Lemma}
  \providecommand{\problemname}{Problem}
  \providecommand{\propositionname}{Proposition}
  \providecommand{\remarkname}{Remark}
  \providecommand{\theoremname}{Theorem}
  \providecommand{\observationname}{Observation}
\begin{document}
\begin{onehalfspace}

\title{Measurability and
Perfect Set Theorems for Equivalence Relations with Small Classes}

\author{Ohad Drucker}
\maketitle
\begin{abstract}
We ask whether $\mathbf{\Delta^1_2}$ or $\mathbf{\Sigma^1_2}$ equivalence relations with $I$-small classes for $I$ a $\sigma$-ideal must have perfectly many classes. We show that for a wide class of ccc $\sigma$-ideals, a positive answer for $\mathbf{\Delta^1_2}$ equivalence relations is equivalent to the $I$-measurability of $\mathbf{\Delta^1_2}$ sets. However, the analogous statement for $\mathbf{\Sigma^1_2}$ equivalence relations is false:  $\mathbf{\Sigma^1_2}$ equivalence relations with meager classes have a perfect set of pairwise inequivalent elements if and only if $\mathbf{\Delta^1_2}$ sets have the Baire property.
\end{abstract}
\section{Introduction}

An equivalence relation $E$ on a Polish space $X$ is said to have
\textit{perfectly many classes} if there is a perfect set $P\subseteq X$
whose elements are pairwise inequivalent.

Given a
$\sigma$-ideal $I$, we say that a set $A$ is \emph{$I$-small} if
$A\in I$ and \emph{$I$-positive} if $A\notin I$. $\mathbb{P}_I$ is the partial order of $I$-positive Borel sets, ordered by inclusion. $I$ is \emph{proper} if $\mathbb{P}_I$ is.

A theorem due to Silver states that $\mathbf{\Pi^1_1}$ equivalence relations either have countably many classes or perfectly many classes. Therefore, a $\mathbf{\Pi^1_1}$ equivalence relation on a Borel $I$-positive set whose classes are $I$-small must have perfectly many classes -- a property we will denote by $PSP_I(\mathbf{\Pi_{1}^{1})}:$

\begin{defn}
\label{definition PSP}For $I$ a $\sigma$-ideal and $\Gamma$ a pointclass, $PSP_{I}(\Gamma)$
(for ``Perfect Set Property'') is the following statement:
``If $E\in\Gamma$ is an equivalence relation on $B$ Borel $I$-positive with
$I$-small classes then $E$ has perfectly many classes''.
\end{defn}

Silver's theorem proving $PSP_I(\mathbf{\Pi_{1}^{1})}$ relied on the definition of the equivalence relation -- namely, its $\Pi^1_1$ definition. Other theorems use the measurability of the equivalence relation to arrive at the same conclusion:

\begin{thm}
\label{mycielski_meager} \cite{Mycielski_meager} If $E$ is an equivalence relation on a Borel nonmeager set that has
the Baire property, and all $E$-classes are meager, then $E$ has
perfectly many classes.
\end{thm}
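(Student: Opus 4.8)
\emph{Proof proposal.} The plan is to reduce the statement to the fact that $E$, viewed as a subset of the plane, is meager, and then to run the classical Mycielski fusion directly so that the resulting perfect set lands inside $B$. First I would localize. Since $B$ has the Baire property and is nonmeager, fix a nonempty open set $U$ in which $B$ is comeager, and work inside $U$. The set $E \cap (U \times U)$ still has the Baire property, and every vertical section $[x]_E \cap U$ is meager by hypothesis; so by the Kuratowski--Ulam theorem a Baire-property set all of whose sections are meager is itself meager in $U \times U$. Hence I may write $E \cap (U \times U) = \bigcup_n F_n$ with each $F_n$ closed nowhere dense and $F_n \subseteq F_{n+1}$, and likewise $U \setminus B = \bigcup_n N_n$ with $N_n$ nowhere dense and increasing.

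Next I would build a Cantor scheme $\{U_s : s \in 2^{<\omega}\}$ of nonempty open subsets of $U$ with the usual properties: $\overline{U_{s^\frown i}} \subseteq U_s$, the closures $\overline{U_{s^\frown 0}}$ and $\overline{U_{s^\frown 1}}$ disjoint, and diameters tending to $0$ along each branch. The two extra requirements, imposed at stage $k$ once the $2^k$ sets at level $k$ are chosen, are: (i) for every pair of distinct $s, t \in 2^k$, $(U_s \times U_t) \cap F_k = \emptyset$; and (ii) for every $s \in 2^k$, $\overline{U_s} \cap N_k = \emptyset$. Both are achievable by finitely many successive shrinkings: since $F_k$ is closed nowhere dense in $U \times U$, its complement is open and dense, so inside any $U_s \times U_t$ I can find a basic open rectangle $W_1 \times W_2$ missing $F_k$ and replace $U_s, U_t$ by $W_1, W_2$; and since $N_k$ is nowhere dense I can shrink each $U_s$ so that its closure misses $N_k$.

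Let $f : 2^\omega \to U$ send a branch $b$ to the unique point of $\bigcap_n U_{b \restriction n}$; this is a continuous injection, so $P = f[2^\omega]$ is perfect. For each $b$ and each $k$ the point $f(b)$ lies in $\overline{U_{b \restriction k}}$, which is disjoint from $N_k$, whence $f(b) \notin \bigcup_k N_k$ and so $f(b) \in B$. For distinct branches $a \neq b$, let $n$ be least with $a \restriction (n+1) \neq b \restriction (n+1)$; then for every $k \geq n+1$ the nodes $a \restriction k$ and $b \restriction k$ are distinct, so (i) gives $(f(a), f(b)) \notin F_k$, and since the $F_k$ exhaust $E \cap (U \times U)$ we get $(f(a), f(b)) \notin E$. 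Thus $P \subseteq B$ is a perfect set of pairwise $E$-inequivalent points.

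I expect the only real bookkeeping subtlety to be that the rectangle-shrinking used to satisfy (i) for one pair must not destroy it for previously treated pairs; this is handled by monotonicity of the $F_k$ together with the observation that shrinking an open set preserves disjointness from a fixed set, so processing the finitely many pairs at level $k$ in turn yields all of them simultaneously. The conceptual heart of the argument, and the single place where the hypothesis is genuinely used, is the application of Kuratowski--Ulam, which turns ``all classes meager plus the Baire property'' into ``$E$ is meager''; everything after that is the standard perfect-set construction.
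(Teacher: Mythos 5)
Your proposal is correct, and in fact the paper offers no proof of this statement at all: it is quoted as Mycielski's theorem from the cited reference, so there is no internal argument to compare against. What you have written is essentially the classical proof — Kuratowski--Ulam converts ``Baire property plus all classes meager'' into ``$E$ is meager in the square,'' and a Cantor scheme avoiding an increasing sequence of closed nowhere dense sets (both those covering $E$ and those covering $U \setminus B$) produces a perfect set of pairwise inequivalent elements inside $B$. One cosmetic correction: a meager set need not be $F_\sigma$, so you should write $E \cap (U \times U) \subseteq \bigcup_n F_n$ and $U \setminus B \subseteq \bigcup_n N_n$ rather than equalities; since your argument only uses that these unions \emph{cover} the respective sets (together with the monotonicity of the $F_n$, which you correctly invoke to dispose of the finitely many levels below the splitting node), nothing else changes.
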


\begin{thm}
\label{Mycielski_measure_zero}\cite{Mycielski_null} If $E$ is a Lebesgue measurable equivalence relation on a Borel set of positive measure and all $E$-classes are null, then
$E$ has perfectly many classes.
\end{thm}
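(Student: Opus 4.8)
The plan is to reduce the statement to a Cantor--scheme construction and then to overcome the essential measure-theoretic difficulty by passing to the Lebesgue density topology. Write $B$ for the Borel positive-measure domain and $\mu$ for the (normalized) measure on it. First I would record the Fubini reduction: since $E$ is measurable and every section $E_{x}$ (the class of $x$) is null, Fubini's theorem gives $(\mu\times\mu)(E)=0$, so $E$ is a \emph{null} subset of $B\times B$. It therefore suffices to construct a perfect set $P\subseteq B$ with $(P\times P)\setminus\Delta\subseteq E^{c}$, since such a $P$ consists of pairwise inequivalent elements. The device is a fusion: build Borel sets $\{A_{s}:s\in 2^{<\omega}\}$ of positive measure, nested along branches, with Euclidean diameters tending to $0$, so that the induced map $2^{\omega}\to B$ is a continuous injection onto a perfect set and distinct branches yield $E$-inequivalent points.

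The key obstacle, and the reason this does \emph{not} reduce to the category statement of Theorem \ref{mycielski_meager}, is that the boxes cannot be separated at finite stages. For the Vitali relation $x\mathrel{E_{v}}y\iff x-y\in\mathbb{Q}$ all classes are null, yet by the Steinhaus theorem $A-A'$ contains an interval whenever $\mu(A),\mu(A')>0$, so $(A\times A')\cap E_{v}\neq\varnothing$ for \emph{every} pair of positive-measure boxes. Hence no finite stage of the fusion can make $A_{s}\times A_{t}$ disjoint from $E$; the separation must be forced only in the limit. Worse, $E$ may be null but comeager in $B\times B$, so it need not be covered by any $F_{\sigma}$ null set, which rules out the naive ``avoid one closed null piece per stage'' bookkeeping that drives the category proof.

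To get around this I would work in the Lebesgue density topology $\tau_{d}$, whose salient features are that the space is Baire with no isolated points, that a set is $\tau_{d}$-meager \emph{if and only if} it is Lebesgue null, and that a set is Lebesgue measurable \emph{if and only if} it has the Baire property relative to $\tau_{d}$. In this topology $E$ and all its classes become meager, so the situation is formally that of Theorem \ref{mycielski_meager}. I would then re-run Mycielski's fusion inside the density topology: writing $E=\bigcup_{n}M_{n}$ with each $M_{n}$ nowhere dense in the (two-dimensional) product density topology, at each stage split the current $\tau_{d}$-open positive-measure boxes in two and, processing one $M_{n}$ and one pair of nodes at a time, shrink to $\tau_{d}$-open subboxes $U,V$ whose product $U\times V$ avoids $M_{n}$, while simultaneously forcing the Euclidean diameters to $0$. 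Standard bookkeeping then guarantees that for every pair of branches the two Euclidean limit points lie outside every $M_{n}$, hence outside $E$.

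The main technical heart, where I expect the real work to lie, is this pair-separation step: it takes place in the product, and the product density topology is considerably more delicate than a product of metric topologies, so one needs a two-dimensional density (Kuratowski--Ulam-type) argument to locate, inside any two positive-measure boxes, $\tau_{d}$-open subboxes whose product meets a prescribed $\tau_{d}$-nowhere-dense set trivially. A secondary point requiring care is that $\tau_{d}$ is neither metrizable nor second countable, so ``shrinking diameters'' and ``perfect set'' must be read in the original Polish topology; slaving the construction to the Euclidean metric, by choosing each $A_{s}$ inside a ball of radius $2^{-|s|}$, resolves this and produces a genuine perfect set $P\subseteq B$ of pairwise inequivalent points.
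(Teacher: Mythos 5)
Your reduction breaks down at the step you yourself call the ``main technical heart'': the claim that $E$, being null in the plane, can be written as $\bigcup_{n}M_{n}$ with each $M_{n}$ nowhere dense in ``the (two-dimensional) product density topology.'' Two different topologies are being conflated here, and the argument fails for each of them. If you mean the genuine two-dimensional density topology on the square (defined from planar density points), then indeed null $=$ meager $=$ nowhere dense and the decomposition is trivial; but open sets in that topology need not contain any box $U'\times V'$ with $U',V'$ of positive measure --- by the very Steinhaus obstruction you describe, the complement of the Vitali relation is such an open set containing no box --- so the box-shrinking fusion cannot take even one step. If instead you mean the product topology $\tau_{d}\times\tau_{d}$, whose basic open sets are boxes and in which the fusion does make sense, then the decomposition claim is false: a planar null set need not be $\tau_{d}\times\tau_{d}$-meager. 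Concretely, let $Q\supseteq\mathbb{Q}$ be a dense null $G_{\delta}$ and $N=\{(x,y)\,:\,x-y\in Q\}$, a Borel planar null set. For any Euclidean-closed nowhere dense $C$, the set $\{(x,y)\,:\,x-y\in C\}$ is $\tau_{d}\times\tau_{d}$-nowhere dense (given a box $U\times V$, Steinhaus gives an interval inside $U-V$, hence a subinterval missing $C$, and one shrinks $U,V$ around points realizing such a difference); since $Q^{c}$ is meager, the complement of $N$ is therefore $\tau_{d}\times\tau_{d}$-meager. Moreover $\tau_{d}\times\tau_{d}$ is a Baire space: one runs exactly your fusion against countably many dense open sets, using inner regularity to trap each coordinate in a decreasing sequence of compact positive-measure sets. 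Hence $N$ is $\tau_{d}\times\tau_{d}$-comeager, so it is not meager. Thus the Kuratowski--Ulam-type principle your plan needs --- planar null implies meager in the product density topology --- is simply false, and nothing in your argument uses the equivalence-relation structure of $E$ to circumvent this (nor is it at all clear that restricting to equivalence relations would make it true).

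Note that the paper does not prove this statement; it cites Mycielski \cite{Mycielski_null}, so the comparison is only against a correct proof of the theorem. The known proofs accept that, as you correctly observe, separation can only occur in the limit, and they compensate with quantitative measure bookkeeping rather than with a change of topology: fix an increasing sequence of Euclidean-closed sets $F_{m}$ whose union is the complement of a null $G_{\delta}$ covering $E\setminus\Delta$; run the fusion with compact positive-measure sets $A_{s}$; when two nodes $s,t$ first split, choose $m(s,t)$ so large that $F_{m(s,t)}$ has relative measure $>1-\varepsilon_{s,t}$ in $A_{s}\times A_{t}$; and thereafter control what fraction of measure is discarded at every later stage so that $F_{m(s,t)}$ keeps meeting every box $A_{s'}\times A_{t'}$ with $s'\supseteq s$, $t'\supseteq t$. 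Closedness of $F_{m(s,t)}$ then places each limit pair inside $F_{m(s,t)}$, hence outside $E$. (Equivalently one can argue via pairs of mutually random reals over a countable model.) This relative-measure bookkeeping, which has no analogue in your scheme, is the missing idea; no refinement of the topology of the square whose basic open sets are boxes can make $E$ meager, as the example above shows.
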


In particular, since analytic sets are Lebesgue measurable and have the Baire property, Mycielski has shown $PSP_{meager}(\mathbf{\Sigma^1_1})$ and $PSP_{null}(\mathbf{\Sigma^1_1})$. Furthermore, in \cite{my_paper} we have shown that $PSP_{I}(\mathbf{\Sigma_{1}^{1})}$ is true
for any proper $\sigma$-ideal $I$.

This paper investigates $PSP_I(\mathbf{\Delta_{2}^{1}})$ and $PSP_I(\mathbf{\Sigma_{2}^{1}})$. We can use Mycielski's results above as a starting point -- they clearly imply:

\begin{observation}
\label{observation_mycielski}
\begin{enumerate}
\item If all $\mathbf{\Delta_{2}^{1}}$ sets are Lebesgue measurable (have the Baire property)
then  $PSP_{null}(\mathbf{\Delta_{2}^{1})}$ ($PSP_{meager}(\mathbf{\Delta_{2}^{1})}$). 
\item If all $\mathbf{\Sigma_{2}^{1}}$
sets are Lebesgue measurable (have the Baire property) then  $PSP_{null}(\mathbf{\Sigma_{2}^{1})}$ ($PSP_{meager}(\mathbf{\Sigma_{2}^{1})}$). 
\end{enumerate}
\end{observation}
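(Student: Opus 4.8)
The plan is to observe that each of the four implications is an immediate application of Mycielski's theorems, once one notes that the hypothesized measurability of the pointclass transfers to the equivalence relation itself, viewed as a subset of the product space. I would spell out part (1) in the null case in detail; the remaining three cases are identical after the obvious substitutions.

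First I would unwind the definitions. Assume every $\mathbf{\Delta^1_2}$ subset of a Polish space is Lebesgue measurable, and let $E\in\mathbf{\Delta^1_2}$ be an equivalence relation on a Borel set $B\subseteq X$ that is positive and has small classes with respect to the ideal of Lebesgue-null sets; by the definition of that ideal this means $B$ has positive Lebesgue measure and every $E$-class is Lebesgue null. The requirement $E\in\mathbf{\Delta^1_2}$ in Definition \ref{definition PSP} is precisely the assertion that $E$ is a $\mathbf{\Delta^1_2}$ subset of the product.

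The key step is then simply that $E$ is a $\mathbf{\Delta^1_2}$ subset of the Polish space $X\times X$. Since the measurability assumption applies to $\mathbf{\Delta^1_2}$ subsets of every Polish space, and in particular of $X\times X$, the relation $E$ is itself Lebesgue measurable as a subset of $X\times X$. Hence $E$ is a Lebesgue measurable equivalence relation on the positive-measure Borel set $B$ all of whose classes are null, so Theorem \ref{Mycielski_measure_zero} produces a perfect set of pairwise $E$-inequivalent points; this is exactly $PSP_{null}(\mathbf{\Delta^1_2})$. Substituting ``Baire property / meager / nonmeager'' for ``measurable / null / positive measure'' and invoking Theorem \ref{mycielski_meager} in place of Theorem \ref{Mycielski_measure_zero} gives the meager version, and replacing $\mathbf{\Delta^1_2}$ by $\mathbf{\Sigma^1_2}$ throughout gives part (2).

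I expect no real obstacle here; the single point worth stating explicitly is that the measurability hypothesis concerns subsets of arbitrary Polish spaces, so it genuinely applies to $E$ as a two-dimensional set in $X\times X$ rather than only to its vertical sections or slices. Because the projective pointclasses are defined uniformly across all Polish spaces and are closed under the coordinate identifications involved, this needs no argument, which is why the statement is recorded as an observation rather than a proposition.
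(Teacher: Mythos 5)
Your proposal is correct and is exactly the argument the paper intends: the paper records this as an immediate consequence of Mycielski's theorems (Theorems \ref{mycielski_meager} and \ref{Mycielski_measure_zero}), offering no further proof, and your unwinding --- $E$ is a $\mathbf{\Delta^1_2}$ (resp.\ $\mathbf{\Sigma^1_2}$) subset of $X\times X$, hence measurable/with the Baire property under the hypothesis, so Mycielski applies to the $I$-positive Borel set $B$ with $I$-small classes --- is precisely that argument made explicit. Your closing remark that the measurability hypothesis must be read as applying to subsets of all Polish spaces (in particular the product) is the right point to flag, and it is indeed the standard convention here.
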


In section \ref{sec:---measruable} we will see how measurability can be generalized to any $\sigma$-ideal. Then in light of the above observation we ask:

\begin{problem}
\label{problem}
Let $I$ be a $\sigma$-ideal.
\begin{enumerate}
\item Is $I$-measurability of all $\mathbf{\Delta^1_2}$ sets equivalent to $PSP_I(\mathbf{\Delta_{2}^{1}})$?
\item Is $I$-measurability of all $\mathbf{\Sigma^1_2}$ sets equivalent to $PSP_I(\mathbf{\Sigma_{2}^{1}})$?
\end{enumerate}
\end{problem}

\subsection{Previous results}

The main results on $PSP_I(\mathbf{\Delta^1_2})$ and $PSP_I(\mathbf{\Sigma^1_2})$ in \cite{my_paper} are:

\begin{thm}
\label{previous_delta_1_2} Let $I$ be a proper $\sigma$-ideal, and assume $\mathbf{\Sigma_{3}^{1}}$
$\mathbb{P}_{I}$-generic absoluteness. Then $PSP_{I}(\mathbf{\Delta_{2}^{1})}$.
\end{thm}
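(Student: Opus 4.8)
The plan is to prove the $\mathbf{\Sigma^1_3}$ statement ``$E$ has perfectly many classes'' by building the perfect set from reals that are mutually generic over a countable model, and then using the absoluteness hypothesis to pull the resulting inequivalences back to $V$. First I would record the complexity bookkeeping: coding a perfect set $P \subseteq B$ by a real, the assertion that its distinct elements are pairwise inequivalent reads $\forall x\forall y\,(x,y \in P \wedge x \neq y \to \neg (x\mathrel{E}y))$, and since $E$ is $\mathbf{\Delta^1_2}$ the matrix $\neg(x \mathrel E y)$ is $\mathbf{\Delta^1_2}$, so the whole inner formula is $\mathbf{\Pi^1_2}$; prefixing $\exists P$ makes ``$E$ has perfectly many classes'' a $\mathbf{\Sigma^1_3}$ statement in a code for $E$ and for $B$. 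This is exactly the level at which the hypothesis is stated, and it is what lets me move the conclusion between $V$ and its $\mathbb{P}_I$-extensions. I would stress at the outset that one cannot simply force the perfect set directly: when $\mathbb{P}_I$ is ccc there is no perfect set of reals mutually generic over $V$, so the construction must be carried out over a countable model, where mutual genericity is a requirement about only countably many dense sets and a perfect set of pairwise mutually generic reals does exist.

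The technical heart is a forcing fact: $\mathbb{P}_I \times \mathbb{P}_I$ forces that its two generic reals $\dot x_l, \dot x_r$ are $E$-inequivalent. To see this, pass first to $V[x_l]$. The statement ``all $E$-classes are $I$-small'' is, given the definability of $I$, of complexity $\mathbf{\Pi^1_3}$, hence preserved from $V$ to $V[x_l]$ by the two-sided $\mathbf{\Sigma^1_3}$ $\mathbb{P}_I$-generic absoluteness; in particular $[x_l]_E$ is contained in a Borel set $C \in I$ coded in $V[x_l]$. Now $x_r$ is $\mathbb{P}_I$-generic over $V[x_l]$, so it avoids the ground-model-coded $I$-small Borel set $C$, and since ``$[x_l]_E \subseteq C$'' is $\mathbf{\Pi^1_2}$ it remains true in $V[x_l][x_r]$ by Shoenfield absoluteness between a model and its forcing extension; therefore $x_r \notin [x_l]_E$, i.e. $\neg(x_l \mathrel E x_r)$.

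With the forcing fact in hand I would build the perfect set. Fix a large $\theta$ and a countable $M \prec H_\theta$ containing codes for $I$, $E$, $B$, and let $\bar M$ be its transitive collapse; by elementarity $\bar M$ satisfies the forcing fact and the absoluteness hypothesis. Properness of $\mathbb{P}_I$ gives that $\{x \in B : x \text{ is } \mathbb{P}_I\text{-generic over } \bar M\}$ is $I$-positive, and a fusion/Cantor-scheme argument along a tree of $I$-positive Borel conditions (with shrinking diameters and splitting into disjoint positive pieces) yields a continuous injection $z \mapsto x_z$ of $2^\omega$ into $B$ whose distinct pairs $(x_z, x_{z'})$ are mutually $\bar M$-generic. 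For such a pair the forcing fact, interpreted in $\bar M$, gives $\bar M[x_z, x_{z'}] \models \neg(x_z \mathrel E x_{z'})$.

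The final and most delicate step is to transfer $\neg(x_z \mathrel E x_{z'})$ from the countable model $\bar M[x_z, x_{z'}]$ up to $V$, and this is where the full strength of $\mathbf{\Sigma^1_3}$ $\mathbb{P}_I$-generic absoluteness is needed: $\mathbf{\Delta^1_2}$ facts about fixed reals are not absolute between a countable model and $V$, so Shoenfield alone will not do. I expect this to be the main obstacle, and I would resolve it by unwinding the hypothesis into its local form --- that $\mathbf{\Sigma^1_2}$ truth with real parameters is computed correctly in $\mathbb{P}_I$-generic, and, via a two-step argument, $\mathbb{P}_I\times\mathbb{P}_I$-generic, extensions of countable elementary submodels, in the same spirit as the level-one correspondence underlying Shoenfield absoluteness. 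The care here is twofold: checking that the hypothesis survives to the relevant product/iterated extension of $\bar M$ so that the second generic real is handled, and matching the definability of $I$ so that ``$I$-small classes'' genuinely sits at the $\mathbf{\Pi^1_3}$ level used above. Granting the transfer, $V \models \neg(x_z \mathrel E x_{z'})$ for all $z \neq z'$, so $P = \{x_z : z \in 2^\omega\}$ is a perfect set of pairwise $E$-inequivalent elements, establishing $PSP_I(\mathbf{\Delta^1_2})$.
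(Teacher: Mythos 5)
Your proposal is being measured against a theorem the present paper only quotes: Theorem \ref{previous_delta_1_2} is cited from \cite{my_paper} and not reproved here, so the fair comparison is with the intended argument and with the closely analogous proof the paper does give, that of Theorem \ref{main_thm_Sigma_1_2 mutually_generics}. Your skeleton --- a forcing fact saying two sufficiently mutually generic reals are $E$-inequivalent, a perfect set of such reals over a countable model, a transfer back to $V$ --- is the right one, but both load-bearing steps have genuine gaps. The forcing fact first: you prove it by treating ``all $E$-classes are $I$-small'' as a $\mathbf{\Pi^1_3}$ statement ``given the definability of $I$''. No definability is given: the theorem concerns an arbitrary proper $\sigma$-ideal, so $I$-smallness need not be projective at any level, and the hypothesis, which is about projective statements with real parameters, cannot be applied to it (for such $I$ there is not even a canonical reinterpretation of $I$, hence of ``$C \in I$'', in $V[x_l]$). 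Even waiving that, the argument confuses product with iteration: in a $\mathbb{P}_I\times\mathbb{P}_I$-extension, $x_r$ is generic over $V[x_l]$ only for the ground-model poset $\mathbb{P}_I^{V}$, and this does not make $x_r$ avoid an $I$-small Borel set $C$ coded in $V[x_l]$; avoiding such sets is what genericity for $\mathbb{P}_I^{V[x_l]}$, i.e.\ for the two-step iteration $\mathbb{P}_I*\dot{\mathbb{P}_I}$, would mean. This is exactly why Theorem \ref{main_thm_Sigma_1_2 mutually_generics}, where definability \emph{is} assumed, works with the iteration rather than the product. The irony is that the forcing fact needs neither definability nor absoluteness: if some $(B_1,B_2)$ forced $\varphi(x_l,x_r)$, where $\varphi$ is the $\mathbf{\Sigma^1_2}$ definition of $E$, pick a countable $M\preceq H_\theta$ and an $M$-generic $x_l\in B_1$; the reals of $B_2$ generic over $M[x_l]$ form an $I$-positive set, each such $x_r$ gives $M[x_l][x_r]\models\varphi(x_l,x_r)$, hence $\varphi(x_l,x_r)$ in $V$ by upward $\mathbf{\Sigma^1_2}$ absoluteness from countable wellfounded models, and $[x_l]_E$ would be $I$-positive --- contradicting smallness of classes.

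The second gap is the transfer step, which is where the hypothesis must genuinely be used, and your treatment of it is a plan rather than a proof. The forcing fact only yields $\bar M[x_z,x_{z'}]\models\neg\varphi(x_z,x_{z'})$, and $\neg\varphi$ is $\mathbf{\Pi^1_2}$, which indeed does not go up from a countable model; but your remedy --- ``unwinding the hypothesis into its local form'' about extensions of $\bar M$ --- is not available, since $\Sigma^1_3$-$\mathbb{P}_I$-generic absoluteness speaks of generic extensions of $V$, not of countable models, and does not localize. Your diagnosis that ``Shoenfield alone will not do'' also points the wrong way: what is needed is to upgrade the forcing fact so that the condition forces $\psi(x_l,x_r)$, where $\psi$ is the $\mathbf{\Sigma^1_2}$ definition of the \emph{complement} of $E$; then $\bar M[x_z,x_{z'}]\models\psi(x_z,x_{z'})$ transfers to $V$ by one-directional upward (Levy--Shoenfield) absoluteness --- precisely the ``$\Pi^1_1$ absoluteness'' step in the paper's proof of Theorem \ref{TFAE}, $(1)\Rightarrow(2)$. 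The sole role of the generic-absoluteness hypothesis is to keep $\varphi$ and $\psi$ complementary (``every pair of reals is decided'') in the relevant extensions, so that forcing $\neg\varphi$ yields forcing $\psi$; deriving this for pairs of generics (a product or two-step extension) from one-step absoluteness is the actual technical content of the theorem, and your proposal does not engage with it. A smaller issue of the same kind: your Cantor-scheme construction of a perfect set of pairwise mutually generic reals for an arbitrary proper ideal is not secured by ``shrinking diameters'' (decreasing Borel $I$-positive sets with vanishing diameter can have empty intersection), and the $I$-positivity of the set of reals generic over the \emph{extended} model $\bar M[x_z]$ is not a direct application of properness; this is nontrivial machinery, available in \cite{my_paper}, that should be cited or rebuilt rather than waved at.
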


\begin{thm}
\label{thm_countable_ideal)}(countable ideal) The following are
equivalent:
\begin{enumerate}
\item $PSP_{countable}(\mathbf{\Delta_{2}^{1})}.$
\item For any real $z$, $\mathbb{R}^{L[z]}\neq\mathbb{R}.$
\end{enumerate}
\end{thm}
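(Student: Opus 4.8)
The plan is to prove both implications around a single structural lemma: for a $\mathbf{\Delta^1_2}$ equivalence relation with countable classes, every class is bounded inside the relativized constructible universe of any of its members. Fix a $\Sigma^1_2$ definition of $E$ with real parameter $z_0$. Then for each $y$ the class $[y]_E=\{x:x\,E\,y\}$ is $\Sigma^1_2(z_0,y)$, and since it is countable it has no perfect subset; by the Mansfield--Solovay theorem (a $\Sigma^1_2(w)$ set without a perfect subset is contained in $L[w]$) we get $[y]_E\subseteq L[z_0,y]$. Hence $x\notin L[z_0,y]$ already implies $\neg(x\,E\,y)$. I would pair this with the complementary fact that hypothesis (2) forces every $\mathbb{R}^{L[z]}$ to be \emph{thin}: if $[T]\subseteq\mathbb{R}^{L[z]}$ for a perfect tree $T$ with code $p$, the canonical homeomorphism $b\mapsto x_b$ of $2^\omega$ onto $[T]$ is coded by $\langle z,p\rangle$ and takes all its values in $L[z]$, so $b\in L[z,p]$ for every $b$, i.e. $\mathbb{R}=\mathbb{R}^{L[z,p]}$, contradicting (2). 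Thus under (2) each $\mathbb{R}^{L[z]}$ is a thin $\Sigma^1_2(z)$ set.

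For $(2)\Rightarrow(1)$, after passing to a perfect subset of $B$ it suffices to build a perfect set $P$ that is $L[z_0]$-\emph{independent}, i.e. $x\notin L[z_0,y]$ for all distinct $x,y\in P$; by the displayed implication such a $P$ consists of pairwise inequivalent elements. I would construct $P$ by a Mycielski-style fusion: keeping finitely many pairwise disjoint perfect boxes at each stage, I split and shrink them so that, for each relevant pair of boxes $(A,B')$, the shrunk pieces $A'\subseteq A$, $B''\subseteq B'$ satisfy $x\notin L[z_0,y]$ for all $x\in A'$, $y\in B''$. The atomic step is where (2) is used essentially: since each vertical section $\mathbb{R}^{L[z_0,y]}$ of the relation $\{(x,y):x\in L[z_0,y]\}$ is thin, the genericity furnished by (2) — which I would identify with $I$-measurability of $\mathbf{\Delta^1_2}$ sets for the countable ideal — lets me pass inside $A\times B'$ to a perfect rectangle on which membership in this relation is decided, and thinness of the sections rules out the ``always'' branch, leaving a rectangle disjoint from it. A bookkeeping through all pairs then delivers $P$.

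For $(1)\Rightarrow(2)$ I argue by contraposition. If (2) fails, fix $z$ with $\mathbb{R}^{L[z]}=\mathbb{R}$; then $\aleph_1^{L[z]}=\aleph_1$ and $\mathfrak{c}=\aleph_1$ (CH holds), and the canonical wellordering $<_{L[z]}$ of $\mathbb{R}$ is $\mathbf{\Delta^1_2}(z)$. I enumerate, in $<_{L[z]}$-increasing order, all reals coding perfect trees as $\langle p_\alpha:\alpha<\omega_1\rangle$, and by recursion on $\alpha$ pick the two $<_{L[z]}$-least branches $x_\alpha,y_\alpha$ of $[p_\alpha]$ not lying among the countably many previously used reals, letting $E$ pair exactly $x_\alpha$ with $y_\alpha$ (all remaining classes singletons). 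Then $E$ has classes of size $\le 2$, and every perfect set, being some $[p_\alpha]$, contains the $E$-edge $\{x_\alpha,y_\alpha\}$, so no perfect set is a set of inequivalent elements and $PSP_{countable}(\mathbf{\Delta^1_2})$ fails. The point requiring care is that $E$ is $\mathbf{\Delta^1_2}$: I would write ``$x\,E\,y$'' both as ``there is a countable correct initial run of the recursion pairing $x$ with $y$'' ($\Sigma^1_2(z)$) and as ``every long enough correct initial run pairs $x$ with $y$'' ($\Pi^1_2(z)$), using that $<_{L[z]}$ and the decoding of perfect trees are $\mathbf{\Delta^1_2}(z)$ and that the recursion is absolutely computable.

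The forward construction and the class-boundedness lemma are routine; the genuine difficulty is the atomic cross-independence step of $(2)\Rightarrow(1)$ together with its fusion, that is, upgrading ``there is a non-constructible real'' to ``there is a perfect set of mutually non-constructible reals''. The obstacle is that $L[z_0,y]$ depends on the entire real $y$, so finite approximations do not control it; I would circumvent this by phrasing independence through the thin $\Sigma^1_2$ sets $\mathbb{R}^{L[z_0,y]}$ and invoking the $I$-measurability/genericity that (2) provides for them (much as in the hypothesis of Theorem \ref{previous_delta_1_2}, but now extracted directly from (2) rather than from full $\mathbf{\Sigma^1_3}$ generic absoluteness), and I expect the passage from pairwise cross-independence to a single perfect transversal to follow the classical Mycielski fusion.
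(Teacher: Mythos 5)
Your direction $(1)\Rightarrow(2)$ is essentially correct and is the standard construction: under $\mathbb{R}=\mathbb{R}^{L[z]}$ one has $\omega_1^{L[z]}=\omega_1$, CH, and a $\Sigma^1_2$-good $\mathbf{\Delta^1_2}(z)$ wellordering, and the recursion pairing the two $<_{L[z]}$-least unused branches of each perfect tree yields a $\mathbf{\Delta^1_2}(z)$ relation with classes of size at most $2$ meeting every perfect set in an equivalent pair. One repair is needed for the $\Pi^1_2$ side of the definition: as described, a real that the recursion never touches is never ``decided'' by any countable run, so ``every long enough correct run pairs $x$ with $y$'' becomes vacuously true for such reals. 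You must fold an enumeration of all reals into the recursion (at stage $\alpha$ also process the $\alpha$-th real, freezing it as a singleton if unused), so that every pair is decided at a countable stage; this is standard and fixable. Your structural lemma (Mansfield--Solovay giving $[y]_E\subseteq L[z_0,y]$) and the thinness of $\mathbb{R}^{L[z]}$ under (2) are also correct.

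The direction $(2)\Rightarrow(1)$, however, fails at exactly the step you flag as the genuine difficulty, and it cannot be repaired, because the set your fusion is designed to produce consistently does not exist under (2). First, the local objection: Sacks measurability of $\mathbf{\Delta^1_2}$ sets (which is indeed equivalent to (2), by \cite{brendle_lowe} 7.1) is a statement about subsets of the line; it gives no means of deciding the planar relation $R=\{(x,y):x\in L[z_0,y]\}$ on a perfect \emph{rectangle}. Deciding planar sets on rectangles is a product-forcing, mutual-genericity phenomenon --- precisely the ``one generic versus a perfect set of mutual generics'' upgrade that this paper isolates as the central obstacle (see theorem \ref{main_thm_Sigma_1_2 mutually_generics} and the open problem for the null ideal after corollary \ref{null case}) --- and moreover $R$ is only $\mathbf{\Sigma^1_2}$, while (2) yields measurability only for $\mathbf{\Delta^1_2}$ sets. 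Second, the global objection: it is consistent that (2) holds while the constructibility degrees of the reals are well-ordered in type $\omega_1$; this is what happens in the countable support iteration of Sacks forcing over $L$ of length $\omega_1$ (Groszek's analysis of iterated perfect set forcing). In that model any two reals $x,y$ satisfy $x\in L[y]$ or $y\in L[x]$, so for no parameter $z_0$ is there even a \emph{pair} of reals mutually non-constructible over $L[z_0]$, let alone a perfect $L[z_0]$-independent set. Even the weakened, asymmetric requirement (one direction of non-constructibility per unordered pair) fails there: a perfect set $P$ coded at a countable stage $\alpha$ has uncountably many branches lying in $L[G_\alpha]$, and these are spread over only countably many degrees, so two of them are mutually constructible. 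Since $PSP_{countable}(\mathbf{\Delta^1_2})$ \emph{does} hold in that model (it satisfies (2), and that is what the theorem asserts), pairwise $E$-inequivalence simply cannot be routed through constructibility-independence: the witnessing perfect sets there consist of reals that are pairwise comparable in $\leq_c$ yet inequivalent.

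The workable route --- and the one this theorem actually rests on --- is the absoluteness one: (2) is equivalent to $\Sigma^1_3$-Sacks-generic absoluteness (\cite{ikegami_sacks}; cf.\ \cite{brendle_lowe} 7.1), and then theorem \ref{previous_delta_1_2} applies to the countable ideal, which is proper. There the transfer of inequivalence is achieved by forcing and absoluteness arguments over countable elementary submodels and over $L$, never by manufacturing reals transcendent over $L[z_0,y]$; that is how the impossible upgrade your fusion requires is avoided. Your last paragraph anticipates that (2) should be a weaker hypothesis than the absoluteness used in theorem \ref{previous_delta_1_2}, but for the countable ideal the two are in fact equivalent, so nothing is lost by taking that route.
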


\begin{thm}
\label{previous_paper_thm_Sigma_1_2_meager}(meager ideal) If
for any real $z$ there is a Cohen real over $L[z]$, then $PSP_{I}(\mathbf{\Sigma_{2}^{1})}.$
\end{thm}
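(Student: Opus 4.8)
The plan is to exploit the equivalence between the hypothesis and the Baire property for $\mathbf{\Delta^1_2}$ sets and to realize the desired perfect set as a set of mutually Cohen-generic reals. First I would recall the classical Judah--Shelah characterization: ``for every real $z$ there is a Cohen real over $L[z]$'' is equivalent to all $\mathbf{\Delta^1_2}$ sets having the Baire property, and that Cohen forcing is precisely $\mathbb{P}_{meager}$. Note that Observation \ref{observation_mycielski}(2) is \emph{not} available here, since it needs the stronger statement that all $\mathbf{\Sigma^1_2}$ sets have the Baire property (equivalently, that the Cohen reals over each $L[z]$ are comeager); the whole point is to get by with the $\mathbf{\Delta^1_2}$ hypothesis, which only guarantees that such generics \emph{exist}. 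Fix the parameter $z$ defining $E$, and using the Baire property of the Borel domain $B$ shrink and reparametrize so that $B$ is comeager in $2^\omega$.

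The combinatorial core I would isolate is the following Main Lemma: if $c_0,c_1$ are mutually Cohen-generic over $L[z]$ then $\neg\,c_0 E c_1$. Granting it, a single Cohen real over $L[z]$ (which exists by hypothesis) decodes, via a fixed continuous map, a perfect set $\{d_t : t\in 2^\omega\}$ whose distinct members are pairwise mutually Cohen-generic over $L[z]$; by the Main Lemma they are pairwise $E$-inequivalent, and after intersecting with the comeager set $B$ and thinning to a perfect subset we obtain perfectly many classes. To prove the Main Lemma I argue by contradiction. If it fails, some condition forces $\dot c_0 E \dot c_1$ over $L[z]$, so by homogeneity of Cohen forcing a basic open box does; fix a generic $c_0$ in the corresponding coordinate. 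Working over $L[z][c_0]$, the class $[c_0]_E$ is $\mathbf{\Sigma^1_2}(z,c_0)$, and by Shoenfield absoluteness together with the standard Solovay approximation for $\mathbf{\Sigma^1_2}$ sets under Cohen forcing there is a Borel set $D\in L[z][c_0]$, comeager in a basic open set, that agrees with $[c_0]_E$ on every Cohen real over $L[z][c_0]$. Hence every Cohen real over $L[z][c_0]$ lying in that open set is $E$-equivalent to $c_0$, and by symmetry and transitivity of $E$ all such generics lie in the single class $[c_0]_E$.

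The main obstacle, and the place where the equivalence-relation structure must do real work, is the final step: converting ``$[c_0]_E$ contains every Cohen real over $L[z][c_0]$ in a fixed open set'' into ``$[c_0]_E$ is non-meager,'' which contradicts the hypothesis that all classes are meager. This is exactly the gap between the $\mathbf{\Delta^1_2}$ and $\mathbf{\Sigma^1_2}$ Baire properties: the weak hypothesis does \emph{not} tell us that the set of Cohen reals over $L[z][c_0]$ is non-meager (only that it is nonempty), so a naive counting of generics fails. I expect to close this gap not by measuring a single class but by arguing homogeneously across the parameter. I would pass to the Borel approximation $\tilde E\in L[z]$ of $E$ on mutually generic pairs, which under the failure of the lemma is comeager in a box of the plane, apply Kuratowski--Ulam inside $L[z]$, and then use the \emph{transitivity} of $E$ to show that the set of points whose class meets a comeager Borel approximation is $\mathbf{\Delta^1_2}$ rather than merely $\mathbf{\Sigma^1_2}$, and hence, by the hypothesis, has the Baire property and is comeager. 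This produces a genuinely non-meager class in $V$, the desired contradiction. Getting the definability bookkeeping right so that this ``large class'' set lands in $\mathbf{\Delta^1_2}$ — which is precisely where transitivity collapses the quantifier complexity — is the delicate point of the whole argument.

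Finally I would remark that the argument is robust in the same spirit as Theorem \ref{previous_delta_1_2}: the forcing $\mathbb{P}_{meager}$ and the existence of a generic over each $L[z]$ play the role of a weak generic-absoluteness input, one level lower than in the $\mathbf{\Delta^1_2}$ setting, which is exactly what lets the $\mathbf{\Delta^1_2}$ hypothesis suffice for a $\mathbf{\Sigma^1_2}$ equivalence relation.
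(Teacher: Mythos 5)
Your overall skeleton matches the paper's route to this theorem: isolate a Main Lemma saying that mutually Cohen-generic pairs over $L[z]$ are $E$-inequivalent, then invoke Brendle's result (\cite{mutual_generics} 1.1, exactly what the paper cites in corollary \ref{meager case}) to turn one Cohen real over $L[z]$ into a perfect set of pairwise mutually generic reals inside the domain. The problem is that your proof of the Main Lemma --- which is the entire mathematical content --- is not actually carried out. Your closing step, ``pass to a Borel approximation $\tilde{E}\in L[z]$, apply Kuratowski--Ulam, and use transitivity to show the relevant set is $\mathbf{\Delta^1_2}$ and hence has the Baire property,'' is a hope rather than an argument: you never exhibit the $\mathbf{\Delta^1_2}$ definition, and you yourself flag this ``definability bookkeeping'' as the delicate point. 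As written, the proposal reduces the theorem to an unproved claim.

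Moreover, the obstacle you built the detour around is illusory. You assert the hypothesis ``does not tell us that the set of Cohen reals over $L[z][c_0]$ is non-meager (only that it is nonempty).'' That is false, because the hypothesis is universal in the parameter: if the set of Cohen reals over $L[z][c_0]$ inside $[t]$ were meager, fix a meager Borel set $B_m$ covering it, with code $m$; by hypothesis there is a Cohen real $c_1$ over $L[z\oplus c_0\oplus m]$, and after a finite modification we may assume $c_1\in[t]$; then $c_1$ is Cohen over $L[z][c_0]$, hence lies in $B_m$, yet $B_m$ is a meager Borel set coded in $L[z\oplus c_0\oplus m]$, so $c_1\notin B_m$ --- contradiction. (Non-meager is all you need; comeager is what would require the $\mathbf{\Sigma^1_2}$ Baire property, but that distinction is irrelevant here.) With this observation your own ``naive counting'' finishes the Main Lemma: every Cohen real over $L[z][c_0]$ in $[t]$ is $E$-equivalent to $c_0$, so $[c_0]_E$ is non-meager, contradicting the smallness of classes. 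For comparison, the paper's proof of theorem \ref{main_thm_Sigma_1_2 mutually_generics} sidesteps the issue differently and more generally: it takes the first generic $z_1\in\mathbb{V}$ over $L[z]$, but the second generic $z_2$ over $\mathbb{V}$ itself (in a forcing extension), and shows via a countable elementary submodel plus Shoenfield that a generic over $\mathbb{V}$ escapes every ground-model class --- this is where $I$-smallness of classes enters, and no largeness of the set of generics is ever needed; that argument works for any $\mathbf{\Sigma^1_2}$ or $\mathbf{\Pi^1_2}$ provably ccc ideal, whereas the covering trick above is special to the universally quantified transcendence hypothesis. Either repair is short, but your text contains neither.
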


Theorem \ref{thm_countable_ideal)} serves as a positive evidence for problem \ref{problem} (1): in \cite{brendle_lowe} 7.1 it is shown that "for any real, $\mathbb{R}^{L[z]}\neq\mathbb{R}$" is equivalent to "all $\mathbf{\Delta^1_2}$ sets are measurable with respect to the countable ideal".

We note that in \cite{my_paper} the notion of $PSP_I$ refers to equivalence relations on all reals, whereas here it is a stronger notion referring to equivalence relations on Borel $I$-positive sets. However, all statements and proofs of \cite{my_paper} are valid for the stronger notion considered here, with the obvious changes in the proofs.

\subsection{Measurability, Generic Absoluteness and Transcendence over $L$}
\label{subsection_ikegami}
Judah
and Shelah \cite{ihoda-shelah-delta12} have shown that $\mathbf{\Delta_{2}^{1}}$ sets are Lebesgue
measurable if and only if for every $z$ there is a random real over
$L[z].$ In \cite{real_line_book} it is shown that
Lebesgue measurability of $\mathbf{\Delta_{2}^{1}}$ sets is
equivalent to $\Sigma_{3}^{1}$ random real generic absoluteness, which
is, $\Sigma_{3}^{1}$ statements are preserved under random real forcing.

The above results indicate a connection between measurability
of $\Delta_{2}^{1}$ sets, $\Sigma_{3}^{1}$-generic-absoluteness and transcendence over $L$ - namely, existence
of generics over $L$. This connection is not reserved
to the case of random real forcing - it exists for Cohen forcing
where measurability is replaced by the Baire property and random reals
by Cohen reals. It also exists for Sacks forcing \cite{ikegami_sacks} with the appropriate
generalizations of the notions of measurability and genericity. In
fact, Brendle and Lowe \cite{brendle_lowe} find similar equivalences for
most of the better known examples, whereas Ikegami in \cite{Ikegami_main} shows how to extend the above results to a wide
class of proper $\sigma$ - ideals.

The notion of $I$-measurability for a general $\sigma$-ideal is discussed in section \ref{sec:---measruable}. Here we list Ikegami's results, after translating them to the context of $I$-measurability we are working with in this paper.

A $\sigma$-ideal $I$ is said to be \emph{$\mathbf{\Sigma_{n}^{1}}$ or $\mathbf{\Pi^1_n}$}
if the set of Borel codes of $I$-small sets is. The term ``\textit{provably ccc}'' refers to $\sigma$-ideals which
are ccc in all models of $ZFC.$ An ideal is said to be \emph{Borel generated} if $I$-small sets are contained in $I$-small Borel sets. \emph{$\Sigma_{3}^{1}$-$\mathbb{P}_{I}$-generic-absoluteness} is the property that $\Sigma_{3}^{1}$ statements on ground model reals are absolute between the universe and $\mathbb{P}_I$-generic extensions of the universe.

For a forcing notion $\mathbb{P}$ , we say that $\mathbb{P}$ is
\textit{strongly arboreal} if the conditions of $\mathbb{P}$ are
perfect trees on $\omega$, and 
\[
T\in\mathbb{P},\ s\in T\Rightarrow T\restriction_{s}\in\mathbb{P}
\]
where $T\restriction_{s}=\{t\ :\ t\in T;\ t\supseteq s\ or\ t\subseteq s\}$.

\begin{thm}\cite{Ikegami_main}
\label{delta_1_2_and_L_generics}Let $I$ be a provably
ccc, provably $\mathbf{\Delta_{2}^{1}}$ and Borel generated $\sigma$-ideal such that $\mathbb{P}_{I}$ is strongly arboreal.  The following are equivalent:
\begin{enumerate}
\item Every $\mathbf{\Delta_{2}^{1}}$ set of reals is $I$-measurable.
\item For any real $z$ and $B\in\mathbb{P}_I,$ there is an $L[z]$ generic in $B$. 
\item $\Sigma^1_3$-$\mathbb{P}_I$-generic-absoluteness.
\end{enumerate}
\end{thm}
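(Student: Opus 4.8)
The plan is to establish the two biconditionals $(1)\Leftrightarrow(2)$ and $(2)\Leftrightarrow(3)$, taking $(2)$ (density of $L[z]$-generics) as the natural hinge. Throughout I would rely on three standing facts secured by the hypotheses. First, since $I$ is provably ccc and provably $\mathbf{\Delta^1_2}$, for every real $z$ the poset $\mathbb{P}_I^{L[z]}=\mathbb{P}_I\cap L[z]$ is correctly and absolutely computed, its maximal antichains (being countable) stay maximal in $V$, and the forcing relation for $\mathbf{\Pi^1_2}$ formulas is itself $\mathbf{\Pi^1_2}$ in a real coding the condition together with a (nice, ccc) name, hence absolute between $L[z]$ and $V$ by Shoenfield. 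Second, since $\mathbb{P}_I$ is strongly arboreal and $I$ is Borel generated, a real $x$ is $\mathbb{P}_I$-generic over $L[z]$ iff it avoids every $I$-small Borel set coded in $L[z]$; consequently the set $N_z$ of non-generics is $\mathbf{\Sigma^1_2}(z)$ and the set $G_z$ of generics is $\mathbf{\Pi^1_2}(z)$. Third, Shoenfield absoluteness for $\mathbf{\Sigma^1_2}$ and $\mathbf{\Pi^1_2}$ between $V$, its generic extensions, and the inner models $L[z]$ and $L[z][x]$.

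For $(3)\Rightarrow(2)$, fix $z$ and $B\in\mathbb{P}_I$ and force below $B$; the generic real $x_G$ lies in $B$ and avoids every $I$-small Borel set coded in $V\supseteq L[z]$, so $V[G]$ satisfies ``there is a real in $B$ avoiding every $I$-small Borel set coded in $L[z]$.'' By the second standing fact this is $\Sigma^1_3$ in $z$ and a code for $B$, so $(3)$ pulls it down to $V$, producing an $L[z]$-generic inside $B$. For $(2)\Rightarrow(3)$, upward absoluteness of $\Sigma^1_3=\exists x\,\mathbf{\Pi^1_2}$ is immediate from Shoenfield. For the downward direction, suppose $V[G]\models\exists x\,\psi(x,z)$ with $\psi\in\mathbf{\Pi^1_2}$, and fix $p\in G$ and a nice name $\dot x$ with $p\Vdash\psi(\dot x,z)$. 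Let $w\in V$ code $z$, $p$, and $\dot x$. The relation $p\Vdash_{\mathbb{P}_I}\psi(\dot x,z)$ is $\mathbf{\Pi^1_2}(w)$ and holds in $V$, hence holds in $L[w]$ by Shoenfield; applying $(2)$ to $w$ yields, inside $V$, a real $c$ that is $\mathbb{P}_I$-generic over $L[w]$ and passes through $p$, so $L[w][c]\models\psi(\dot x[c],z)$. Since $\dot x[c]\in L[w][c]\subseteq V$ and $\psi$ is $\mathbf{\Pi^1_2}$, Shoenfield lifts this to $V\models\psi(\dot x[c],z)$, giving the required witness.

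The heart is $(1)\Leftrightarrow(2)$, a Judah--Shelah-type equivalence. For $(2)\Rightarrow(1)$, take $A\in\mathbf{\Delta^1_2}(z)$ and $B\in\mathbb{P}_I$. Forcing below $B$, the color $A(x_G)$ of the generic is already computed in $L[z][x_G]$ by Shoenfield, so it is decided by some $C\in\mathbb{P}_I^{L[z]}$; since $x_G\in B\cap C$ is generic, $C'=B\cap C$ is $I$-positive Borel, $C'\subseteq B$, and (say) forces $\dot x\in A$. Every $L[z]$-generic inside $C'$ then lies in $A$, by the forcing decision together with $\mathbf{\Sigma^1_2}$-upward absoluteness, so $C'\setminus A\subseteq N_z$; by $(2)$ this residue contains no $I$-positive Borel set, which is the homogeneity required by the notion of $I$-measurability of Section~\ref{sec:---measruable}. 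For $(1)\Rightarrow(2)$ I would argue contrapositively: if some $B_0\in\mathbb{P}_I$ contains no $L[z]$-generic, then $B_0\subseteq N_z$ is covered by the $I$-small Borel sets coded in $L[z]$, enumerated along the $\mathbf{\Sigma^1_2}(z)$-good wellordering of $\mathbb{R}\cap L[z]$. The map sending $x\in B_0$ to the least index of such a set containing it is $\mathbf{\Delta^1_2}(z)$, and from it the canonical Bernstein-type construction along this wellordering produces a $\mathbf{\Delta^1_2}(z)$ set $A\subseteq B_0$ such that both $A$ and its relative complement meet every $I$-positive Borel subset of $B_0$, so $A$ is not $I$-measurable.

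The main obstacle is the pair of implications in $(1)\Leftrightarrow(2)$, and within them the genuinely delicate point is the bridge between the internal regularity statement --- $I$-measurability, a property of the actual reals of $V$ modulo $I$ --- and the external, forcing-theoretic picture in terms of generic reals. Two features make the $\mathbf{\Delta^1_2}$ level exactly right and exactly subtle: the correct condition is mere density of the generics (equivalently, that $N_z$ carries no $I$-positive Borel set) rather than co-smallness of $N_z$, so that in $(2)\Rightarrow(1)$ one must match ``no positive Borel subset'' to the chosen form of negligibility rather than to outright membership in $I$; and in $(1)\Rightarrow(2)$ one must verify that the Bernstein-type set is genuinely $\mathbf{\Delta^1_2}(z)$, which is precisely where provable $\mathbf{\Delta^1_2}$-ness of $I$ (yielding the $\mathbf{\Sigma^1_2}$ good wellordering and the $\mathbf{\Delta^1_2}$ least-index function) and provable ccc (yielding the absolute computation of $\mathbb{P}_I^{L[z]}$ and of the $\mathbf{\Pi^1_2}$ forcing relation) are consumed. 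By contrast, $(2)\Leftrightarrow(3)$ is comparatively soft, resting only on Shoenfield absoluteness and the reflection of a single $\Sigma^1_3$ instance through an $L[w]$-generic supplied by $(2)$.
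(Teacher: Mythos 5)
The paper does not prove this theorem itself---it is quoted from Ikegami as background---but it proves its own analogues in Section \ref{sec:Generic--Absoluteness}: Theorem \ref{TFAE} establishes $(1)\Leftrightarrow(3)$ via Zapletal's $\mathbb{P}_I$-Borel uniformization and the $\mathbb{P}_I$-Baire property, and Theorem \ref{prop_measurability_vs_transcendence_for_ccc} establishes $(1)\Leftrightarrow(2)$. Measured against those proofs, your $(2)\Leftrightarrow(3)$ is acceptable in outline (though your ``standing facts''---that genericity over $L[z]$ equals avoidance of $L[z]$-coded $I$-small Borel sets, and that the forcing relation for $\mathbf{\Pi^1_2}$ formulas is itself $\mathbf{\Pi^1_2}$---are exactly Ikegami's key lemmas and are asserted, not proved). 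The genuine problems are in both halves of $(1)\Leftrightarrow(2)$.

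In $(2)\Rightarrow(1)$ your closing inference is invalid. Definition \ref{def: I measurable} demands, below every $B\in\mathbb{P}_I$, an actual Borel $I$-positive set $B'$ with $B'\subseteq A$ or $B'\subseteq{}^{\sim}A$. What you establish---that $C'\setminus A$ contains no $I$-positive Borel set---is strictly weaker and does not produce such a $B'$: for the null ideal, a Bernstein set $A$ has the property that for \emph{every} condition $D$ neither $D\cap A$ nor $D\setminus A$ contains an $I$-positive Borel set, and it is the canonical non-measurable set. So ``the residue carries no positive Borel set'' is not ``the homogeneity required'' by the definition. The missing idea is the one the paper uses in Theorem \ref{prop_measurability_vs_transcendence_for_ccc}: first reduce, via Proposition \ref{omega1_inaccessible_implies_sigma_1_2_measurable}, to the case where some real $w$---which should code $z$, the parameter of $A$, \emph{and} a code for $B$---satisfies $\omega_1^{L[w]}=\omega_1$; then decompose the $\mathbf{\Sigma^1_2}$ sets $B\cap A$ and $B\setminus A$ into $\aleph_1$ Borel sets, a decomposition computed in $L[w]$ and correct in $\mathbb{V}$ because the two models share $\omega_1$. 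The generic in $B$ supplied by $(2)$ lands in one of these $L[w]$-coded Borel pieces; containing a generic, that piece is $I$-positive in $L[w]$, hence $I$-positive in $\mathbb{V}$ by the definability of $I$ and Shoenfield, and it lies inside $B\cap A$ or inside $B\setminus A$. That piece is the required $B'$; nothing in your argument produces it.

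In $(1)\Rightarrow(2)$ the phrase ``the canonical Bernstein-type construction along this wellordering produces a $\mathbf{\Delta^1_2}(z)$ set'' hides the entire difficulty and, taken literally, is false: a set both of whose halves meet every $I$-positive Borel subset of $B_0$ \emph{is} a Bernstein-type set, and diagonalizing against the continuum many positive Borel sets along a wellordering yields a non-definable set; no $\mathbf{\Delta^1_2}$ such set falls out of the least-index function alone. The classical definable witness in the Judah--Shelah argument is the planar set $\{(x,y)\,:\,f(x)\leq f(y)\}$, whose non-measurability is proved by a Fubini argument---but the Fubini property is not among the hypotheses on $I$ in this theorem (elsewhere the paper lists it as an additional assumption precisely because it is not automatic). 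The paper's route avoids all of this: $(1)$ implies $\Sigma^1_3$-$\mathbb{P}_I$-generic absoluteness (Theorem \ref{TFAE}, through $\mathbf{\Pi^1_1}$ Borel uniformization and $\mathbb{P}_I$-Baireness), the statement ``there is a real in $B$ avoiding every $I$-small Borel set coded in $L[z]$'' is $\Sigma^1_3$ and is forced by $B$, and absoluteness pulls it down to $\mathbb{V}$. You should replace your contrapositive argument by this one.
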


\begin{thm}\cite{Ikegami_main}
\label{sigma_1_2_and_L_generics}Let $I$ be a provably
ccc, provably $\mathbf{\Delta_{2}^{1}}$ and Borel generated $\sigma$-ideal such that $\mathbb{P}_{I}$ is strongly arboreal. The following are equivalent:
\begin{enumerate}
\item Every $\mathbf{\Sigma_{2}^{1}}$ set of reals is $\mathbb{P}_{I}$-measurable.
\item For any real $z$, the set of $\mathbb{P}_I$ generics over $L[z]$ is co-$I$.
\end{enumerate}
\end{thm}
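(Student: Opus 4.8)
The plan is to prove the two implications separately, and in both directions the crux is to compute the descriptive complexity of the set of generics and then feed it into the hypothesis. Writing $G_z$ for the set of $x$ that are $\mathbb{P}_I$-generic over $L[z]$ and $N_z = X \setminus G_z$, I would first check that, because $I$ is provably ccc and provably $\mathbf{\Delta^1_2}$, genericity of $x$ over $L[z]$ can be tested against the \emph{countable} maximal antichains coded in $L[z]$: $x \in G_z$ iff for every real $r \in L[z]$ coding a maximal antichain $A_r \subseteq \mathbb{P}_I$ there is $T \in A_r$ with $x \in [T]$. Since ``$r \in L[z]$'' is $\mathbf{\Sigma^1_2}(z)$ and the remaining clauses are Borel in $(r,x)$ (using that $I$ is $\mathbf{\Delta^1_2}$), this makes $G_z$ a $\mathbf{\Pi^1_2}(z)$ set and $N_z$ a $\mathbf{\Sigma^1_2}(z)$ set.

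For $(2)\Rightarrow(1)$, let $A$ be $\mathbf{\Sigma^1_2}(z)$. Working inside $L[z]$, I would form the set of conditions $D = \{T \in \mathbb{P}_I : T \Vdash \dot x_{gen} \in A\}$ (the forcing relation computed in $L[z]$, which makes sense as $A$ is $\mathbf{\Sigma^1_2}$ and upward absolute), choose a maximal antichain $A^\ast \subseteq D$, which is countable by ccc, and set $B = \bigcup_{T \in A^\ast}[T]$, a Borel set coded in $L[z]$. For $x \in G_z$ with associated filter $G_x$, the forcing theorem gives $x \in B$ iff $G_x$ meets $D$ iff $L[z][x] \models x \in A$; and by Shoenfield absoluteness applied between $V$, $L[z][x]$ and $L[x,z]$, the latter is equivalent to $x \in A$. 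Hence $A \triangle B \subseteq N_z$, and since $N_z \in I$ by (2), $A$ equals the Borel set $B$ modulo $I$, i.e.\ $A$ is $I$-measurable.

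For $(1)\Rightarrow(2)$, fix $z$ and suppose toward a contradiction that $N_z \notin I$. Since $N_z$ is $\mathbf{\Sigma^1_2}$ it is $I$-measurable by (1), and (using ccc to rule out $N_z$ being trivial on every subcondition) there is a condition $B$ with $[B] \setminus N_z \in I$, so $I$-almost every element of $[B]$ is non-generic over $L[z]$. Now I would force with $\mathbb{P}_I$ below $B$ over $V$ and let $x_G$ be the generic real. On one hand $x_G$ is $\mathbb{P}_I$-generic over $V \supseteq L[z]$, and because $I$ is provably ccc and Borel generated the inclusion $\mathbb{P}_I^{L[z]} \subseteq \mathbb{P}_I^V$ is nice enough that $x_G$ is $\mathbb{P}_I$-generic over $L[z]$, i.e.\ $V[G] \models x_G \in G_z$. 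On the other hand, strong arboreality and Borel generation guarantee that the generic real avoids every ground-model $I$-small Borel set (for any condition $T$ and $S \in I$ there is $T' \le T$ with $[T'] \cap S = \emptyset$, so these $T'$ are dense), so $x_G \notin [B] \setminus N_z$; since $x_G \in [B]$ and ``$I$-small'' is absolute for provably-$\mathbf{\Delta^1_2}$, provably ccc ideals, $V[G] \models x_G \in N_z$. This contradicts $x_G \in G_z$, so $N_z \in I$ and $G_z$ is co-$I$.

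The main obstacle is the bookkeeping hidden in the two ``niceness'' claims, both of which are exactly where the hypotheses on $I$ are spent: that $\mathbb{P}_I^{L[z]}$ is completely embedded in $\mathbb{P}_I^V$ (so that $V$-genericity restricts to $L[z]$-genericity) and that the predicate ``this Borel code names an $I$-small set'' is absolute between $L[z]$, $V$ and the forcing extension. Both rest on $I$ being provably ccc (so antichains, and hence the forcing relation and predensity, are computed correctly and absolutely), provably $\mathbf{\Delta^1_2}$ and Borel generated (so smallness is a suitably absolute, tree-reflected property), and on strong arboreality (so each real carries a canonical filter and the generic real is a genuine branch avoiding small sets). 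I would expect the complexity calculation of $N_z$ and these absoluteness verifications to be the only genuinely delicate points; the rest is the standard Solovay-style forcing-relation computation.
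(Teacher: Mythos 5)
First, a point of context: the paper never proves this theorem -- it is quoted from Ikegami \cite{Ikegami_main} in the survey subsection -- so your proposal can only be judged against the standard Judah--Shelah/Ikegami-style argument. Your complexity computation ($N_z$ is $\mathbf{\Sigma^1_2}(z)$, $G_z$ is $\mathbf{\Pi^1_2}(z)$) and your proof of $(2)\Rightarrow(1)$ are essentially correct: that direction is the usual Solovay-style argument (a maximal antichain in $L[z]$ of conditions forcing $\dot x_{gen}\in A$, the forcing theorem for the canonical filter $G_x$, Shoenfield absoluteness between $L[z][x]=L[z,x]$ and $V$, and the ccc + Borel generated characterization of measurability via symmetric difference). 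The one quibble is that maximality of the antichain coded by $r$ must be evaluated in $L[z]$ -- it is not a Borel condition on $(r,x)$ as you assert -- but since ``$\{B_n\}$ is a maximal antichain'' is equivalent to ``$^{\sim}\bigcup_n B_n \in I$'', which is $\mathbf{\Delta^1_2}$ and hence absolute, the complexity count survives.

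The direction $(1)\Rightarrow(2)$, however, contains a fatal error. Having found a condition $B$ with $B \setminus N_z \in I$, you force with $\mathbb{P}_I$ below $B$ over $V$ and claim $V[G]\models x_G\in N_z$ ``since $x_G\in[B]$ and $I$-smallness is absolute''. This inference is false: with $S$ a Borel $I$-small set covering $B\setminus N_z$, the inclusion $B\setminus S\subseteq N_z$ has the form $\forall x\,(\mathrm{Borel}\rightarrow\Sigma^1_2)$, i.e.\ it is $\Pi^1_3$, and it does not persist to $V[G]$; only its instances at ground-model reals do, and $x_G$ is not a ground-model real. In fact the opposite of your claim holds: $N_z^{V[G]}$ is the union of the same Borel $I$-small sets coded in $L[z]$, each still $I$-small in $V[G]$ by Shoenfield, and the generic real avoids every ground-model-coded $I$-small Borel set -- so $x_G\notin N_z^{V[G]}$, which is exactly what the first half of your own paragraph establishes. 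Your two halves contradict each other, not the assumption. A concrete test: take $V=L$ and $z=0$; then $N_z=\mathbb{R}$, so every condition $B$ satisfies $B\subseteq N_z$, yet the generic below $B$ is generic over $L$, so $V[G]\models x_G\notin N_z$. Since your argument uses measurability only to produce $B$, it would ``prove'' in ZFC that no condition is contained in $N_z$, which fails badly in $L$. The correct route does not force over $V$ at all: from (1) one gets $\mathbf{\Delta^1_2}$-measurability, hence by theorem \ref{prop_measurability_vs_transcendence_for_ccc} there is, already in $V$, a $\mathbb{P}_I$-generic over $L[z]$ inside $B$ (the proof of that theorem hides the parameter trick: one works over $L[z,c]$ where $c$ is a Borel code for $B$, and $L[z,c]$-genericity implies $L[z]$-genericity because maximal antichains of $L[z]$ stay maximal by $\mathbf{\Delta^1_2}$-absoluteness). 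That $V$-real lies in $B\cap G_z$, contradicting $B\subseteq N_z$. The idea your proof is missing is precisely this: the generic element of $B$ must be found in the ground model via the $\mathbf{\Delta^1_2}$-level transcendence theorem, not adjoined by forcing.
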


\subsection{The results of this paper}

We devote section \ref{sec:---measruable}
for a detailed exposition of the notion of $I$-measurability,
where $I$ is any $\sigma$-ideal:

\begin{defn}
\label{def: I measurable-1}\cite{brendle_lowe} Let $I$ be a $\sigma$-ideal, and $A\subseteq\mathbb{R}.$
We say that $A$ is $I$-measurable if for every $B\in\mathbb{P}_{I}$,
there is $B'\subseteq B$ Borel $I$-positive such that either $B'\subseteq A$ or $B'\subseteq^{\sim}A$.
\end{defn}

This notion extends the notion of measurability for ccc $\sigma$
-ideals, as the following proposition shows. Recall that $I$ is \emph{Borel generated} if any $I$-small set is contained in a Borel $I$-small set.

\begin{prop}
If $I$ is ccc and Borel generated, then $A$ is $I$-measurable if and only if there
is $B$ Borel such that $A\triangle B\in I$. 
\end{prop}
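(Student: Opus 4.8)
The plan is to prove the two implications separately. The backward direction (Borel approximation $\Rightarrow$ $I$-measurability) is a routine splitting argument and uses only that $I$ is Borel generated, while the forward direction carries the real content through an exhaustion / maximal antichain argument that uses the ccc.

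For the backward direction, suppose $A\triangle B\in I$ with $B$ Borel. Since $I$ is Borel generated I can fix a Borel $I$-small set $N\supseteq A\triangle B$, so that $A$ and $B$ coincide off $N$. Given any $C\in\mathbb{P}_I$, the set $C\setminus N$ is Borel and $I$-positive (otherwise $C=(C\setminus N)\cup(C\cap N)$ would be a union of two $I$-small sets). Writing $C\setminus N=\bigl[(C\setminus N)\cap B\bigr]\cup\bigl[(C\setminus N)\setminus B\bigr]$ as a union of two Borel sets, at least one summand is $I$-positive; and since $A$ agrees with $B$ off $N$, the first summand is contained in $A$ while the second is disjoint from $A$. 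Either summand is then the required $I$-positive Borel witness, so $A$ is $I$-measurable.

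For the forward direction, suppose $A$ is $I$-measurable, and let $\mathcal{D}$ be the family of $C\in\mathbb{P}_I$ with $C\subseteq A$ or $C\cap A=\emptyset$; $I$-measurability says precisely that $\mathcal{D}$ is dense in $\mathbb{P}_I$. I would fix a maximal antichain $\mathcal{M}$ among the elements of $\mathcal{D}$; as an antichain in the ccc poset $\mathbb{P}_I$ it is countable, say $\mathcal{M}=\{C_n\}_n$. Put $B=\bigcup\{C_n : C_n\subseteq A\}$, a Borel subset of $A$, so that $A\triangle B=A\setminus B$. The crucial step is that the Borel set $R=\mathbb{R}\setminus\bigcup_n C_n$ is $I$-small: were it $I$-positive, applying $I$-measurability to $R$ would produce a member of $\mathcal{D}$ contained in $R$, hence disjoint from every $C_n$, contradicting the maximality of $\mathcal{M}$. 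Since every $C_n$ not contained in $A$ is disjoint from $A$, each point of $A\setminus B$ must lie in $R$; thus $A\setminus B\subseteq R\in I$ and therefore $A\triangle B\in I$.

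The hard part is the forward direction, and specifically the interplay between Borelness and the ideal. The subtlety is that $A$ is an arbitrary, possibly non-Borel, set, so one may not apply $I$-measurability to pieces of $A$ itself; the whole argument must be conducted with Borel sets. Countability of the antichain, supplied by the ccc, is exactly what keeps $B$ and $\bigcup_n C_n$ Borel, and it is only because $R$ is Borel that being $I$-positive and containing a Borel $I$-positive subset coincide for it, which is what makes the maximality-versus-density contradiction go through. Everything else is bookkeeping with $\triangle$ and the downward closure of $I$.
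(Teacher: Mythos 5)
Your proof is correct and follows essentially the same route as the paper's: the backward direction uses a Borel $I$-small superset of $A\triangle B$ and splits a condition along $B$, while the forward direction takes a maximal antichain of decided conditions, uses ccc to keep the unions Borel, and derives $I$-smallness of the uncovered remainder from maximality. Your write-up just makes explicit a few points the paper leaves implicit (countability of the antichain and why the remainder being positive contradicts maximality).
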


A few basic facts on regularity of measurable sets are given, among
which:

\begin{prop}
If $A$ is universally Baire and $I$ is a proper $\sigma$-ideal, then $A$ is
$I$-measurable. 
\end{prop}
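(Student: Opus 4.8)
The plan is to exploit the tree representation of universally Baire sets together with the $\mathbb{P}_I$-generic real and a reflection argument through a countable elementary submodel. By the Feng--Magidor--Woodin characterisation, since $A$ is universally Baire there are trees $S,T$ (on $\omega\times\kappa$ for some ordinal $\kappa$) with $A=p[S]$ and $\sim A=p[T]$ in $V$, and such that $p[S]$ and $p[T]$ remain complementary in every forcing extension. I would also use that membership of a \emph{ground-model} real $z$ in $p[S]$ is absolute, since it amounts to ill-foundedness of the section tree $S(z)=\{u:(z\restriction|u|,u)\in S\}$, and ill-foundedness is absolute between transitive models. Fixing $B\in\mathbb{P}_I$, the goal is to produce an $I$-positive Borel $B''\subseteq B$ with $B''\subseteq A$ or $B''\subseteq\,\sim\! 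A$.

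First I would pass to the generic real. Forcing with $\mathbb{P}_I$ below $B$ adds a point $\dot x_{gen}$ lying in every ground-model Borel set in the generic filter; in the extension the absolute complementarity of the trees gives $\dot x_{gen}\in p[S]\cup p[T]$, so by the forcing theorem there is a condition $B_0\subseteq B$ deciding which disjunct holds, say $B_0\Vdash \dot x_{gen}\in p[\check S]$ (the other case is symmetric and yields containment in $\sim\! A$).

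Next comes the reflection, which I view as the heart of the argument. Using that $\mathbb{P}_I$ is proper, I would invoke Zapletal's characterisation of properness: for club-many countable $M\prec H_\theta$ and every condition $C\in\mathbb{P}_I\cap M$, the set $\{z\in C : z\text{ is }M\text{-generic}\}$ is Borel and $I$-positive. I would choose such an $M$ containing $S,T,I,B_0$ and the name $\dot x_{gen}$, and let $\pi:M\to\bar M$ be the transitive collapse; note $\pi$ fixes Borel codes (being reals), so $\pi(B_0)=B_0$, while $\bar S:=\pi(S)$ is a tree on $\omega\times\bar\kappa$ with $\bar\kappa<\omega_1$. I then claim that \emph{every} $M$-generic $z\in B_0$ lies in $A$: for such $z$ the filter $G_z=\pi[\{C\in\mathbb{P}_I\cap M:z\in C\}]$ is $\bar{\mathbb P}$-generic over $\bar M$ with $\dot x_{gen}$ evaluated to $z$, so by elementarity and the forcing theorem $\bar M[z]\models z\in p[\bar S]$. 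Since $\bar M[z]$ is transitive, a branch through $\bar S(z)$ witnessing this is a genuine branch in $V$, and un-collapsing it by $\pi^{-1}$ produces a branch through $S(z)$; hence $z\in p[S]=A$.

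Finally I would set $B'':=\{z\in B_0:z\text{ is }M\text{-generic}\}$. By the choice of $M$ this is Borel and $I$-positive, and by the claim $B''\subseteq A$, which is exactly the instance of $I$-measurability required at $B$. I expect the main obstacle to be precisely this reflection step: one must arrange that the $M$-generic points land \emph{inside} $A$ on the nose rather than merely modulo $I$, which is what allows the argument to avoid any Borel-generatedness hypothesis. The two delicate ingredients there are the absoluteness of ill-foundedness for the collapsed tree $\bar S$ and the fact, guaranteed by properness, that enough $M$-generic reals exist inside $B_0$ to form an $I$-positive Borel set.
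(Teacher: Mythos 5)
Your proof is correct and is essentially the paper's own argument: both decide membership of the generic real in $A$ by a condition below $B$, use properness to extract an $I$-positive Borel set of $M$-generic reals inside that condition, and then use the universally Baire representation of $A$ to turn the forced membership into genuine membership of those generics. The only difference is that you spell out, via the Feng--Magidor--Woodin trees and the transitive-collapse/branch-uncollapsing argument, the step the paper compresses into the phrase ``using the universally Baire definition of $A$''.
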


\begin{prop}
If there is a measurable cardinal and $I$ is a proper $\sigma$-
ideal, then $\mathbf{\Sigma_{2}^{1}}$ sets are $I$-measurable.
\end{prop}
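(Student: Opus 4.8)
The plan is to combine the preceding proposition (universally Baire sets are $I$-measurable) with the classical fact that a measurable cardinal supplies absolutely complementing tree representations for $\mathbf{\Sigma^1_2}$ sets. The observation that makes this work is that $\mathbb{P}_I$ is \emph{small}: there are only $2^{\aleph_0}$ Borel sets, so $|\mathbb{P}_I|\le 2^{\aleph_0}$, and since a measurable cardinal $\kappa$ is inaccessible we have $|\mathbb{P}_I|<\kappa$. Hence full universal Baireness is not needed; Baireness with respect to the single forcing $\mathbb{P}_I$ suffices, and this is exactly what one measurable cardinal delivers.

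First I would fix a $\mathbf{\Sigma^1_2}$ set $A$ and a measurable cardinal $\kappa$ with a normal measure $\mu$. By the Shoenfield analysis, $A=p[T]$ for a tree $T$ on $\omega\times\omega_1$. Using $\mu$, the Martin--Solovay construction yields a tree $S$ (on $\omega\times\lambda$ for a suitable $\lambda<\kappa$) with $p[S]=\mathbb{R}\setminus A$, and the pair $(T,S)$ is \emph{absolutely complementing} for every forcing of size $<\kappa$: in any such generic extension $V[g]$ one still has $p[T]\cap p[S]=\emptyset$ and $p[T]\cup p[S]=\mathbb{R}^{V[g]}$. In particular, since $|\mathbb{P}_I|<\kappa$, the trees $T$ and $S$ remain complementing in every $\mathbb{P}_I$-generic extension; this is precisely the property witnessing that $A$ is ``Baire'' with respect to $\mathbb{P}_I$ (equivalently, $A$ is $<\kappa$-universally Baire).

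With this in hand I would re-run the argument behind the previous proposition, replacing the use of global universal Baireness by the absolute complementation of $(T,S)$ over $\mathbb{P}_I$. Concretely, given $B\in\mathbb{P}_I$, pass to the $\mathbb{P}_I$-extension below $B$ and consider the generic real $\dot x_{gen}\in B$; by absolute complementation it lies in exactly one of $p[T]$, $p[S]$, so some condition $B'\le B$ decides which. The standard pullback in the idealized-forcing framework --- for Borel $I$-positive $B'$ one has $B'\Vdash \dot x_{gen}\in p[T]$ if and only if $B'\subseteq A$ modulo $I$, and properness lets us thin $B'$ to a genuine Borel $I$-positive subset contained in $A$ or in its complement --- yields the required $B'\subseteq A$ or $B'\subseteq^{\sim}A$. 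This gives $I$-measurability of $A$.

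The main obstacle, and the step deserving the most care, is verifying that the Martin--Solovay tree $S$ still projects to the complement of $A$ after forcing with $\mathbb{P}_I$; this is exactly where the hypothesis $|\mathbb{P}_I|<\kappa$ is used, and where measurability of $\kappa$ (rather than mere inaccessibility) enters, through the wellfoundedness of the ultrapowers underlying $S$. A secondary point is to confirm that the proof of the earlier proposition really invokes only $\mathbb{P}_I$-Baireness and not Baireness in arbitrary extensions, so that $<\kappa$-universal Baireness is enough; given the smallness of $\mathbb{P}_I$ this is immediate.
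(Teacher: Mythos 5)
Your proposal is correct, but it takes a genuinely different route from the paper's own proof. The paper argues directly: given $B$ deciding the statement $x_{gen}\in A$, it fixes a countable $M\preceq H_{\theta}$ containing the measurable cardinal and works with the Borel $I$-positive set of $M$-generics in $B$; the positive case is just upward $\mathbf{\Sigma^1_2}$ absoluteness, and in the negative case (where $M[x]\models x\notin A$ but, for contradiction, $x\in A$) it iterates the measure of $N=M[x]$ through $\omega_1$ stages so that the iterate $N_{\omega_1}$ contains the countable ordinal $\alpha$ capturing the Shoenfield witness, collapses $\alpha$, and uses Shoenfield absoluteness together with elementarity of the iteration map to contradict $N\models x\notin A$. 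You instead package that same ultrapower technology into a black box --- the Martin--Solovay tree --- concluding that $\mathbf{\Sigma^1_2}$ sets are ${<}\kappa$-universally Baire, noting $|\mathbb{P}_I|\le 2^{\aleph_0}<\kappa$ by inaccessibility, and then reducing to the preceding proposition on universally Baire sets, whose proof (as you correctly observe) only uses absolute complementation with respect to the single forcing $\mathbb{P}_I$ plus properness. Both arguments ultimately rest on wellfoundedness of iterated ultrapowers; yours is more modular and isolates the real point (smallness of $\mathbb{P}_I$ plus ${<}\kappa$-Baireness), while the paper's is self-contained, avoids invoking the Martin--Solovay construction, and its iteration technique is reused elsewhere (it is the same argument as in \cite{my_Borel_canonization}, and it is adapted again in proposition \ref{omega1_inaccessible_implies_sigma_1_2_measurable}). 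Two minor inaccuracies in your write-up, neither of which affects correctness: the Martin--Solovay tree lives on $\omega\times\lambda$ for some $\lambda$ \emph{above} $\kappa$ (it is built from ultrapowers by measures concentrating on $\kappa$), not $\lambda<\kappa$; and the asserted equivalence ``$B'\Vdash x_{gen}\in p[T]$ if and only if $B'\subseteq A$ modulo $I$'' is delicate for general proper (non-ccc) $I$, since $B'\setminus A$ need not be covered by an $I$-small Borel set --- what the argument actually yields, and what you in fact use, is that the set of $M$-generics in $B'$ is a Borel $I$-positive subset of $A$ (or of its complement).
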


We say that $I$ is \textit{provably ccc} if ``$I$ is ccc'' is
a theorem of $ZFC.$ We say that $I$ is $\Sigma_{n}^{1}$ or $\Pi_{n}^{1}$
if
\[
\{c\ :\ c\in BC,\ B_{c}\in I\}
\]
 is $\Sigma_{n}^{1}$ or $\Pi_{n}^{1}$ ,where $BC$ is the set of
Borel codes and for $c\in BC$, $B_{c}$ is the Borel set coded by
$c$.

\begin{prop}
Let $I$ be a $\mathbf{\Sigma^1_2}$ provably ccc $\sigma$-ideal. If $\omega_1$ is inaccessible to the reals, then $\mathbf{\Sigma^1_2}$ sets are $I$-measurable.
\end{prop}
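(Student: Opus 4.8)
The plan is to run a Solovay-style argument relative to $L[z]$, exploiting that ``$\omega_1$ inaccessible to the reals'' forces $L[z]$ to contain only countably many reals. Fix a $\mathbf{\Sigma^1_2}$ set $A$, defined by $x\in A\iff\exists y\,\psi(x,y,z)$ with $\psi\in\mathbf{\Pi^1_1}$, and fix a condition $B\in\mathbb{P}_I$; absorbing a Borel code of $B$ into the parameter, we may assume $z$ computes codes for both $A$ and $B$. I must produce a Borel $I$-positive $B'\subseteq B$ with $B'\subseteq A$ or $B'\cap A=\emptyset$. The central object is the set $G_z$ of reals that are $\mathbb{P}_I$-generic over $L[z]$, and the two things I would establish are: (i) $G_z$ is Borel and $I$-co-small; and (ii) on $G_z$, the set $A$ coincides with a genuinely Borel set.

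For (i), let $N_z=\bigcup\{B_c: c\in BC\cap L[z],\ L[z]\models B_c\in I\}$ be the union of all $I$-small Borel sets coded in $L[z]$. Since $I$ is $\mathbf{\Sigma^1_2}$, the statement ``$B_c\in I$'' is $\Sigma^1_2(c)$ and hence upward absolute from $L[z]$ to $V$, so each such $B_c$ is genuinely $I$-small. Because $\omega_1$ is inaccessible to the reals, $\omega_1^{L[z]}$ is countable, so $L[z]$ has only countably many reals, hence only countably many Borel codes; thus $N_z$ is a countable union of $I$-small sets, so $N_z\in I$, and $N_z$ is Borel. Set $G_z=\mathbb{R}\setminus N_z$, a Borel $I$-co-small set. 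The provable ccc-ness of $I$ is what makes $G_z$ exactly the generics: a maximal antichain of $\mathbb{P}_I^{L[z]}$ is countable, so its union is a Borel set whose complement is a Borel $I$-small set coded in $L[z]$, hence contained in $N_z$; therefore every $x\notin N_z$ meets every maximal antichain coded in $L[z]$, and $G_x=\{B'\in\mathbb{P}_I^{L[z]}:x\in B'\}$ is an $L[z]$-generic filter whose associated real is $x$, giving $L[z][x]=L[z,x]$.

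For (ii), I would define $C=\bigcup\{B'\in\mathbb{P}_I^{L[z]}: B'\Vdash_{L[z]}\text{``}\dot x_{\mathrm{gen}}\in A\text{''}\}$, where $\dot x_{\mathrm{gen}}$ names the generic real and membership in $A$ is read off the $\mathbf{\Sigma^1_2}$ definition. As $L[z]$ has only countably many conditions, this is a countable union of Borel sets, so $C$ is Borel. By the truth lemma, for $x\in G_z$ we have $x\in C$ iff some condition in $G_x$ forces the formula, iff $L[z][x]\models\exists y\,\psi(x,y,z)$. Finally, Shoenfield absoluteness between $V$ and the inner model $L[z,x]=L[z][x]$ gives $L[z][x]\models\exists y\,\psi(x,y,z)$ iff $x\in A$. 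Hence $A\cap G_z=C\cap G_z$.

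To conclude, $B\cap G_z=B\setminus N_z$ is Borel and $I$-positive, and it splits as the disjoint union of the Borel sets $C\cap B\cap G_z$ and $(\mathbb{R}\setminus C)\cap B\cap G_z$, so at least one of them is $I$-positive. If the first is positive, take $B'=C\cap B\cap G_z\subseteq C\cap G_z=A\cap G_z\subseteq A$; if the second is positive, take $B'=(\mathbb{R}\setminus C)\cap B\cap G_z\subseteq(\mathbb{R}\setminus C)\cap G_z=(\mathbb{R}\setminus A)\cap G_z$, which is disjoint from $A$. Either way $B'$ witnesses $I$-measurability of $A$ below $B$. The main obstacle, and the step deserving the most care, is the reliance in (i)--(ii) on $\mathbb{P}_I$ behaving as a genuine forcing whose generic object is the real itself: one must verify that for a provably ccc, $\mathbf{\Sigma^1_2}$ ideal the characterization ``avoids every $L[z]$-coded $I$-small Borel set $\Leftrightarrow$ generic over $L[z]$'' holds, and that the truth lemma applies to the $\mathbf{\Sigma^1_2}$ membership statement. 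The $\mathbf{\Sigma^1_2}$-definability of $I$ enters precisely in guaranteeing the absoluteness of $I$-smallness needed to keep $N_z$ small in $V$. These are exactly the forcing-theoretic facts underpinning the Brendle--L\"owe and Ikegami framework.
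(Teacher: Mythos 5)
Your proof is correct, but it takes a genuinely different route from the paper's. You run the global Solovay--Judah--Shelah-style argument over $L[z]$: inaccessibility makes $\mathbb{R}^{L[z]}$ countable, so the union $N_z$ of all $I$-small Borel sets coded in $L[z]$ is a countable union and stays $I$-small in $V$ by upward $\mathbf{\Sigma^1_2}$ absoluteness of ``$B_c\in I$''; provable ccc-ness (applied inside $L[z]$, via maximal antichains) makes every real off $N_z$ a $\mathbb{P}_I^{L[z]}$-generic over $L[z]$; and the truth lemma over $L[z]$ plus Shoenfield absoluteness between $L[z,x]$ and $V$ let you replace $A$ on the complement of $N_z$ by the Borel set $C$ built from $L[z]$'s forcing relation, so that in fact $A\triangle C\in I$. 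The paper argues locally instead, in the style of its measurable-cardinal result (Proposition \ref{measurable_implies_sigma_1_2_measurable}): extend $B$ to a condition $B'$ of $V$'s own forcing $\mathbb{P}_I$ deciding ``$x_{gen}\in A$'', take a countable $M\preceq H_\theta$ containing all constructible reals (this is where inaccessibility enters there), use ccc-ness to see that the $M$-generics in $B'$ form an $I$-positive Borel set, and transfer the decided statement to $V$: by upward absoluteness from $M[x]$ in the positive case, and in the negative case via the chain $M[x]\models x\notin A$, hence $L_{\omega_1^L}[x]\models x\notin A$ (analytic absoluteness), hence $V\models x\notin A$ (Shoenfield, using that a $\mathbb{P}_I$-generic over $L$ preserves $\omega_1^L$ by ccc-ness in $L$). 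The hypotheses are consumed at the same junctures in both arguments, but the products differ: your argument never touches $V$'s forcing relation or elementary submodels, and it directly yields the stronger classical form of measurability ($A$ equals a Borel set modulo $I$, which the paper separately shows is equivalent to $I$-measurability for ccc Borel generated ideals); the paper's argument reuses its submodel machinery verbatim (its first case is quoted from Proposition \ref{measurable_implies_sigma_1_2_measurable}), and that machinery is what survives outside the ccc setting, where your global approach is intrinsically blocked because the $L[z]$-generics need not form a co-$I$ set for merely proper ideals. The two points you flag as standard --- that a real avoiding $N_z$ induces a genuine $L[z]$-generic filter whose associated generic real is the real itself, and that the truth lemma applies to the $\mathbf{\Sigma^1_2}$ membership statement --- are indeed the only delicate steps, and both go through under your hypotheses; for instance, directedness of your filter $G_x$ is exactly the observation that $B_1\cap B_2$ is coded in $L[z]$ and contains $x\notin N_z$, hence cannot be $I$-small in $L[z]$.
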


In section \ref{sec:Generic--Absoluteness} we elaborate on equivalent formulations of measurability in terms of generic absoluteness and transcendence properties over $L$. This section is heavily based on ideas, proofs and arguments from  Ikegami \cite{ikegami_sacks} and \cite{Ikegami_main}, presented in a somewhat different context. Establishing those equivalences in the context we work in will prove useful in understanding the perfect set properties of equivalence relations with small classes. An overview of Ikegami's original results can be found in subsection \ref{subsection_ikegami} above. 

Recall that \emph{$\Sigma_{3}^{1}$-$\mathbb{P}_{I}$-generic-absoluteness} is the property that $\Sigma_{3}^{1}$ statements on ground model reals are absolute between the universe and $\mathbb{P}_I$-generic extensions of the universe.

We use the following notion due to Zapletal to establish an equivalence
between $\Sigma_{3}^{1}$-$\mathbb{P}_{I}$-generic absoluteness and
$\mathbf{\Delta_{2}^{1}}$ $I$-measurability for any proper $\sigma$
-ideal $I$:

\begin{defn}
(\cite{zapletal_book} 2.3.4) For $\Gamma$ a pointclass, we say that $\Gamma$
has $\mathbb{P}_{I}$-Borel uniformization if given $A\in\Gamma$
a subset of $(\omega^{\omega})^{2}$ with nonempty sections and $B\subseteq\omega^{\omega}$
$I$-positive, there is $B'\subseteq B$ Borel $I$-positive and a Borel
function $f\subseteq A$ with domain $B'$. 
\end{defn}

\begin{thm}
Let $I$ be a proper $\sigma$-ideal. The following are equivalent:
\begin{enumerate}
\item $\Sigma_{3}^{1}$-$\mathbb{P}_{I}$-generic-absoluteness.
\item $\mathbf{\Sigma_{2}^{1}}$ has $\mathbb{P}_{I}$-Borel uniformization.
\item $\mathbf{\Pi_{1}^{1}}$ has $\mathbb{P}_{I}$-Borel uniformization.
\item $\mathbf{\Delta_{2}^{1}}$ sets are $I$-measurable.
\end{enumerate}
\end{thm}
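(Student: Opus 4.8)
The plan is to establish the cycle $(1)\Rightarrow(3)\Rightarrow(2)\Rightarrow(4)\Rightarrow(1)$, which visits all four conditions. Two of the arrows are soft. For $(3)\Rightarrow(2)$, given $A\in\mathbf{\Sigma_2^1}$ with nonempty sections, write $A(x,y)\Leftrightarrow\exists z\,P(x,y,z)$ with $P\in\mathbf{\Pi_1^1}$, fix a Borel bijection $w\mapsto((w)_0,(w)_1)$ of $\omega^\omega$ with $(\omega^\omega)^2$, and apply $(3)$ to the $\mathbf{\Pi_1^1}$ set $Q(x,w)\Leftrightarrow P(x,(w)_0,(w)_1)$, whose sections are nonempty exactly when those of $A$ are; composing the resulting Borel selector with $w\mapsto(w)_0$ uniformizes $A$. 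For $(2)\Rightarrow(4)$, given $A\in\mathbf{\Delta_2^1}$ and $B\in\mathbb{P}_I$, apply $(2)$ to the $\mathbf{\Sigma_2^1}$ relation $R(x,y)$ stating ``$y(0)=0$ and $x\in A$, or $y(0)=1$ and $x\notin A$'' (both disjuncts are $\mathbf{\Sigma_2^1}$ since $A\in\mathbf{\Delta_2^1}$), which has nonempty sections; splitting the Borel domain of the selector along the Borel value $f(x)(0)\in\{0,1\}$ produces an $I$-positive Borel piece lying entirely in $A$ or in its complement, i.e.\ $I$-measurability of $A$.

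The first substantial arrow is $(1)\Rightarrow(3)$. Let $P\in\mathbf{\Pi_1^1}$ have nonempty sections and fix $B\in\mathbb{P}_I$. The assertion ``$P$ has nonempty sections'' is $\Pi^1_3$ in the ground-model parameters, so by $(1)$ it persists to the $\mathbb{P}_I$-extension; evaluating it at the generic real $x_G\in B$ produces a name $\dot y$ and a condition $B'\le B$ with $B'\Vdash P(x_G,\dot y)$. I would then read $\dot y$ off as a ground-model Borel function $f$ of the generic real on a positive Borel $B''\subseteq B'$, so that $B''\Vdash P(x_G,f(x_G))$. The point of aiming at $\mathbf{\Pi_1^1}$ rather than $\mathbf{\Sigma_2^1}$ is that the failure set $N=\{x\in B'':\neg P(x,f(x))\}$ is then analytic: no $I$-positive Borel $C\subseteq N$ can exist (as a condition, $C$ would force the generic into $N$, against $B''\Vdash P(x_G,f(x_G))$), and since analytic sets, being universally Baire, are $I$-measurable for proper $I$ by the earlier proposition, $B''$ has an $I$-positive Borel subset $B'''$ disjoint from $N$. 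Then $f\restriction B'''$ is a Borel uniformizer of $P$, giving $(3)$.

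The second substantial arrow is $(4)\Rightarrow(1)$. Upward preservation of $\Sigma^1_3$ statements is free from Shoenfield absoluteness (a ground witness to the inner $\Pi^1_2$ matrix keeps working). For the downward direction, suppose $B\Vdash\psi(\dot x)$ with $\psi\in\Pi^1_2$ in ground parameters; I would use $\mathbf{\Delta_2^1}$-measurability to manufacture a genuine ground-model witness, following the pattern of Theorem \ref{delta_1_2_and_L_generics}: measurability furnishes, inside every $I$-positive Borel set, reals sufficiently generic over the relevant inner models $L[z]$, and evaluating the name along such a real and transferring the $\Pi^1_2$ truth by Shoenfield absoluteness yields a witness already present in $V$. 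The adaptation needed here is to run this argument for a general proper $I$ rather than the provably-ccc ideals of Theorem \ref{delta_1_2_and_L_generics}.

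The main obstacle is the reading-of-names step common to $(1)\Rightarrow(3)$ and, in dual form, to $(4)\Rightarrow(1)$: turning a forced witness into a Borel function (resp.\ a ground-model real) over an $I$-positive Borel set. For a general proper $\mathbb{P}_I$ one has neither continuous reading of names nor a usable fusion, and indeed Borel reading of names cannot be a theorem of $\mathrm{ZFC}$ here, since it would already force the downward $\Sigma^1_3$ absoluteness of $(1)$ and hence $\mathbf{\Delta_2^1}$-measurability. The reading must therefore be extracted from the hypothesis itself, in the style of Ikegami and Zapletal, via an auxiliary product or iteration of $\mathbb{P}_I$ together with $\Sigma^1_3$ absoluteness between the intermediate extensions; once the name is captured as a Borel function, the $I$-regularity of the low-complexity failure set (analytic in the $(1)\Rightarrow(3)$ direction) does the remaining work of locating the $I$-positive Borel domain.
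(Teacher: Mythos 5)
Your soft arrows are fine: $(3)\Rightarrow(2)$ and $(2)\Rightarrow(4)$ are correct coding arguments (the paper instead proves $(3)\Rightarrow(4)$ directly from a pair of $\Pi^1_1$ sets projecting onto $A$ and ${}^\sim A$; your factorization through $(2)$ is an equivalent reshuffling). Your $(1)\Rightarrow(3)$ is essentially the paper's $(1)\Rightarrow(2)$: absoluteness yields a name for a witness at the generic, properness turns the name into a Borel function, and one then verifies the function uniformizes on an $I$-positive Borel set -- the paper verifies with $M$-generics for countable $M\preceq H_\theta$ and $\Pi^1_1$ upward absoluteness, while you use $I$-measurability of the analytic failure set $N$ via proposition \ref{UB_measurability}; both work. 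But your closing paragraph rests on a misconception: Borel reading of names \emph{is} a theorem of $ZFC$ for proper $\sigma$-ideals (\cite{zapletal_book} 2.3.1), and the paper invokes it without ceremony (``Since $I$ is proper, there is $B'\subseteq B$ in $\mathbb{P}_{I}$ and $g:B'\to\omega^{\omega}$ Borel such that $B'\Vdash g(x_{G})=\tau[G]$''). Moreover it does \emph{not} imply downward $\Sigma^1_3$ absoluteness: from $B\Vdash\Psi(f(x_{gen}))$ with $\Psi\in\Pi^1_2$ you obtain no ground-model witness, since $f(x_{gen})$ is a new real and Shoenfield does not pull new reals down; indeed in $L$ Borel reading of names holds for Cohen forcing, yet $\Sigma^1_3$-Cohen-absoluteness fails there. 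So there is no obstacle where you located one -- and no shortcut either, which is precisely why the last arrow is the hard one.

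The genuine gap is $(4)\Rightarrow(1)$. You propose to manufacture a ground-model witness by adapting the $L[z]$-generics argument of theorem \ref{delta_1_2_and_L_generics} (theorem \ref{prop_measurability_vs_transcendence_for_ccc} in this paper), but that argument is intrinsically tied to $I$ being $\mathbf{\Sigma^1_2}$ and provably ccc: it needs $I\cap L[z]$ to be, in $L[z]$, a ccc ideal, maximal antichains to be countable and hence coded by reals of $L[z]$, and the $\Sigma^1_2$ definition of $I$ to transfer $I$-positivity between $L[z]$ and $\mathbb{V}$. For an arbitrary proper $\sigma$-ideal -- the actual hypothesis -- none of this is available: $I$ need not be definable at all, $I\cap L[z]$ need not belong to $L[z]$, and a filter ``generic over $L[z]$'' decides nothing about $\mathbb{P}_I$ as computed in $\mathbb{V}$. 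The paper's proof of $(4)\Rightarrow(1)$ is a different argument that your proposal does not contain: it shows that $I$-measurability of $\mathbf{\Delta^1_2}$ sets implies every $\mathbf{\Delta^1_2}$ set is $\mathbb{P}_I$-Baire, working on the Stone space $st(\mathbb{P}_I)$ -- a Baire measurable $f$ agrees on a comeager set with the evaluation of a name $\tau$, properness reads $\tau$ as a Borel $g$ on a condition, and $I$-measurability of the $\mathbf{\Delta^1_2}$ set $g^{-1}(A)$ gives the required density -- and then invokes \cite{Ikegami_main} 3.9 together with \cite{UB_sets} to pass from $\mathbb{P}_I$-Baireness of $\mathbf{\Delta^1_2}$ sets to $\Sigma^1_3$-$\mathbb{P}_I$-generic-absoluteness. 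That Feng--Magidor--Woodin/Ikegami transfer is the missing idea; alternatively you could close the cycle elementarily via the paper's second argument for $(3)\Rightarrow(1)$, which combines reading of names with $\Pi^1_1$-Borel-uniformization of the set $\{(x,y):\neg\Phi(f(x),y)\}$ -- but some such ingredient beyond reading of names is indispensable, and your sketch defers exactly this step.
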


For definable enough provably ccc $\sigma$-ideals, an argument from \cite{Ikegami_main} adds transcendence over $L$ to the list of equivalent statements:

\begin{thm}
Let $I$ be a $\mathbf{\Sigma^1_2}$ provably ccc $\sigma$-ideal. Then the following are equivalent:
\begin{enumerate}
\item $\mathbf{\Delta_{2}^{1}}$ sets are $I$-measurable.
\item For every $B \in \mathbb{P}_I$ and for every real $z$, there is a $\mathbb{P}_I$-generic over $L[z]$ in $B$.
\end{enumerate}
\end{thm}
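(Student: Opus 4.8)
The plan is to keep (2) fixed while replacing (1) by $\Sigma^1_3$-$\mathbb{P}_I$-generic-absoluteness. Since a provably ccc ideal is proper, the preceding theorem applies and shows that (1) holds iff $\Sigma^1_3$-$\mathbb{P}_I$-generic-absoluteness holds; so it suffices to prove the equivalence of (2) with $\Sigma^1_3$-$\mathbb{P}_I$-generic-absoluteness. The first thing I would establish is a complexity lemma: that ``$x$ is $\mathbb{P}_I$-generic over $L[z]$'' is $\mathbf{\Pi^1_2}(z)$. The one nonroutine input is the description of maximal antichains afforded by the hypotheses on $I$: because $\mathbb{P}_I$ is \emph{provably} ccc, every maximal antichain is countable, and for a countable family $\{B_n\}$ of conditions the assertion ``$\{B_n\}$ is a maximal antichain'' is equivalent to ``$\bigcup_n B_n$ is co-$I$'', which — as $I$ is $\mathbf{\Sigma^1_2}$ and the complement of a countable union is Borel-coded — is a $\mathbf{\Sigma^1_2}$ property of a code for the family. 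Since ``$c\in L[z]$'' is $\Sigma^1_2(z)$ and a $\Sigma^1_2$ statement is Shoenfield-absolute between $L[z]$ and $V$, two consequences follow at once: the predicate $G_z(x)\equiv$``$x$ is generic over $L[z]$'' is $\mathbf{\Pi^1_2}(z)$, and every maximal antichain of $\mathbb{P}_I^{L[z]}$ remains maximal in $\mathbb{P}_I^{V}$. The latter shows that any $\mathbb{P}_I^V$-generic real is automatically $\mathbb{P}_I$-generic over $L[z]$.

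For the direction from $\Sigma^1_3$-generic-absoluteness to (2), fix $B\in\mathbb{P}_I$ with Borel code $c$ and a real $z$, and force below $B$ with $\mathbb{P}_I$. The generic real $x_g$ lies in $B$ and, by the complexity lemma, is generic over $L[z]$; hence $V[g]\models\exists x\,(x\in B_c\wedge G_z(x))$. By the lemma this statement is $\Sigma^1_3$ in the ground-model parameters $c,z$, so $\Sigma^1_3$-$\mathbb{P}_I$-generic-absoluteness reflects it to $V$, producing a generic over $L[z]$ inside $B$. This is (2).

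The converse, (2)$\Rightarrow\Sigma^1_3$-absoluteness, is the substantial direction. Upward preservation of a $\Sigma^1_3(a)$ statement $\exists y\,\psi(y,a)$ (with $\psi\in\mathbf{\Pi^1_2}$, $a\in V$) is immediate from Shoenfield absoluteness of the $\Pi^1_2$ matrix. For downward preservation suppose $B\Vdash^{V}_{\mathbb{P}_I}\exists y\,\psi(y,\check a)$. Using the standard facts that forcing with $\mathbb{P}_I$ adds a generic real with $V[g]=V[x_g]$ and that, below a condition, a name for a real is computed by a ground-model Borel function of the generic real, I would fix $B_1\le B$ and a Borel function $\tau$ with $B_1\Vdash^{V}\psi(\tau(\dot x_{gen}),\check a)$. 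Let $z$ be a real coding $a$, $\tau$ and a code for $B_1$, so that these all lie in $L[z]$. If one can show that the same forcing statement holds over $L[z]$, namely $B_1\Vdash^{L[z]}_{\mathbb{P}_I}\psi(\tau(\dot x_{gen}),\check a)$, then applying (2) to obtain an $L[z]$-generic $x_h\in B_1$ yields $L[z][x_h]\models\psi(\tau(x_h),a)$; the witness $y=\tau(x_h)$ lies in $V$ and satisfies $\psi(y,a)$ in $V$ by Shoenfield absoluteness of $\psi$, giving $V\models\exists y\,\psi(y,a)$.

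The main obstacle is exactly this last transfer: moving the forcing relation $B_1\Vdash\psi(\tau(\dot x_{gen}),\check a)$ — whose matrix is $\mathbf{\Pi^1_2}$ in the generic real — from $V$ down to $L[z]$. I would handle it, following \cite{Ikegami_main}, by showing that this forcing relation is equivalent to a statement that is Shoenfield-absolute between $L[z]$ and $V$. Concretely, the antichain analysis of the complexity lemma lets one express ``$B_1$ forces the $\Pi^1_2$ statement $\theta(\dot x_{gen})$'' without reference to the full universe: it reduces to an assertion about $B_1$, the $\omega_1$-Suslin tree representing the $\mathbf{\Sigma^1_2}$ set $\{x:\neg\theta(x)\}$, and the $\mathbf{\Sigma^1_2}$, provably ccc structure of $I$, and this assertion is $\Sigma^1_2$ over Borel codes, hence absolute. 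Verifying that this reduction is correct and survives the passage to $L[z]$ is the technical heart of the argument, and the place where the definability and provable-ccc hypotheses on $I$ are indispensable.
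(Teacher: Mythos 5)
Your reduction of (1) to $\Sigma^1_3$-$\mathbb{P}_I$-generic-absoluteness via Theorem \ref{TFAE}, the complexity computation showing that ``$x$ is $\mathbb{P}_I$-generic over $L[z]$'' is $\mathbf{\Pi^1_2}(z)$, and the direction from absoluteness to (2) are all correct and essentially coincide with the paper's proof of $(1)\Rightarrow(2)$. The problem is the converse. You route it through deriving full $\Sigma^1_3$-generic-absoluteness from (2), and the one step you do not carry out --- transferring $B_1\Vdash^{V}\psi(\tau(\dot x_{gen}),\check a)$ down to $B_1\Vdash^{L[z]}\psi(\tau(\dot x_{gen}),\check a)$ --- is precisely the mathematical content of that direction, and the sketch you offer in its place does not hold up as stated. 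The forcing relation for a $\mathbf{\Pi^1_2}$ matrix is not ``$\Sigma^1_2$ over Borel codes'' in any evident way: written out, ``$B_1$ forces $\theta(\dot x_{gen})$'' quantifies over all conditions (equivalently, over all maximal antichains), so the naive computation gives a higher complexity, and the Solovay-style reformulation ``$\{x\in B_1:\neg\theta(x)\}\in I$'' presupposes an extension of $I$-smallness to $\mathbf{\Sigma^1_2}$ sets together with its absoluteness between $L[z]$ and $V$ --- a task of essentially the same difficulty as the theorem being proved. So the proposal has a genuine gap at its central step.

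Two remarks on closing it. First, the transfer can be obtained without any absolute description of the forcing relation, by a genericity argument: if $B_1\nVdash^{L[z]}\psi(\tau(\dot x_{gen}),\check a)$, pick $B_2\le B_1$ in $L[z]$ with $B_2\Vdash^{L[z]}\neg\psi(\tau(\dot x_{gen}),\check a)$; since ``$B_2\notin I$'' is $\mathbf{\Sigma^1_2}$, hence Shoenfield-absolute, $B_2$ is still a condition in $V$, and forcing over $V$ below $B_2$ yields a real $x_g$ generic over both $V$ and $L[z]$ with $B_2$ in both induced filters (this is exactly your antichain-preservation observation). The forcing theorem applied in $L[z]$ gives $L[z][x_g]\models\neg\psi(\tau(x_g),a)$, upward Shoenfield absoluteness from the inner model $L[z][x_g]$ gives $V[x_g]\models\neg\psi(\tau(x_g),a)$, contradicting the forcing theorem applied in $V$, since $x_g\in B_2\subseteq B_1$. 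Second, note that the paper avoids this direction altogether: it proves $(2)\Rightarrow(1)$ directly, by reducing (via Proposition \ref{omega1_inaccessible_implies_sigma_1_2_measurable}) to the case where $\omega_1^{L[z]}=\omega_1$ for some real $z$, decomposing $B\cap A$ and $B-A$ into $\aleph_1$ constructible Borel pieces, and observing that the $L$-generic in $B$ supplied by (2) must land in one of those pieces, which is therefore $I$-positive in $L$ and hence, by the $\mathbf{\Sigma^1_2}$-definability of $I$, $I$-positive in $V$. That argument is both shorter than your route and never needs the forcing relation over $L[z]$ at all.
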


Sections \ref{sec:sigma_1_2 transcendence to PSP} and \ref{sec:sigma_1_2 psp to transencdence} focus on 
$PSP_{I}(\mathbf{\Delta_{2}^{1})}$,  $PSP_{I}(\mathbf{\Sigma_{2}^{1})}$ and problem \ref{problem}.
Section \ref{sec:sigma_1_2 transcendence to PSP} presents properties
of transcendence over $L$ which are sufficient  conditions for $PSP_{I}(\mathbf{\Sigma_{2}^{1})}$ when $I$ is provably ccc and definable enough. Section \ref{sec:sigma_1_2 psp to transencdence} completes the picture by providing necessary conditions for both $PSP_{I}(\mathbf{\Sigma_{2}^{1})}$
and $PSP_{I}(\mathbf{\Delta_{2}^{1})}$ for the same class of $\sigma$-ideals.

A set $A$ of reals is\emph{ a set of $\mathbb{P}_{I}*\dot{\mathbb{P}_{I}}$} generics if for every $x\in A$
and $y\in A$ which are not equal, $(x,y)$ is $\mathbb{P}_{I}*\dot{\mathbb{P}_{I}}$ generic.

\begin{thm}
Let $I$ be $\mathbf{\mathbf{\Sigma_{2}^{1}}}$
or $\mathbf{\Pi_{2}^{1}}$ and provably
ccc. If for any real $z$ and $B\in\mathbb{P}_{I}$ there is a perfect set $P \subseteq B$ of $\mathbb{P}_{I}*\dot{\mathbb{P}_{I}}$
generics over $L[z]$, then $PSP_{I}(\mathbf{\Sigma_{2}^{1})}.$
\end{thm}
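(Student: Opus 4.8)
The plan is to show that the perfect set of generics supplied by the hypothesis is, after possibly thinning, a set of pairwise $E$-inequivalent reals. Fix a $\Sigma^1_2(z)$ equivalence relation $E$ on a Borel $I$-positive set $B$ with $I$-small classes, and use the hypothesis (applied to $z$ and $B$) to fix a perfect $P\subseteq B$ consisting of $\mathbb{P}_I*\dot{\mathbb{P}_I}$-generics over $L[z]$. The definition is symmetric in the two coordinates, so for distinct $x,y\in P$ both $(x,y)$ and $(y,x)$ are iteration-generic; in particular $y$ is $\mathbb{P}_I$-generic over $M:=L[z][x]$ and $x$ over $L[z][y]$. I would first aim at the Core Claim that distinct iteration-generics $x,y$ over $L[z]$ satisfy $x\not E y$ (so that $P$ itself works), holding in reserve the option of passing to a perfect subset.

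First I would harvest regularity from the hypothesis. Taking a single point of $P$ shows that for every real $w$ and every $B'\in\mathbb{P}_I$ there is a $\mathbb{P}_I$-generic over $L[w]$ in $B'$; by the $\Sigma^1_2$ (resp.\ $\Pi^1_2$) provably ccc measurability equivalence established earlier this yields $\mathbf{\Delta^1_2}$ $I$-measurability, and hence, by the proper-ideal equivalence theorem, $\Sigma^1_3$-$\mathbb{P}_I$-generic-absoluteness together with $\mathbb{P}_I$-Borel uniformization for $\mathbf{\Pi^1_1}$ and $\mathbf{\Sigma^1_2}$. I would also use that analytic and coanalytic sets, being universally Baire, are $I$-measurable in every model, so by provable ccc they possess Borel $I$-hulls.

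Write $x E y\Leftrightarrow\exists a\,\phi(x,y,a)$ with $\phi\in\Pi^1_1(z)$, and suppose $x E y$. Following the meager prototype (Theorem \ref{previous_paper_thm_Sigma_1_2_meager}), the strategy is to contradict the $I$-smallness of $[x]_E$ in $V$ rather than to ``trap'' the single generic $y$. If the instance is witnessed in $M[y]$, then some condition $C$ in the generic filter forces $x E\dot y$ over $M$, so by upward Shoenfield absoluteness every $\mathbb{P}_I$-generic over $M$ below $C$ lands in $[x]_E$. Relativizing the hypothesis to $\langle z,x\rangle$ with $B'=C$ produces a perfect set of further iteration-generics over $M$ lying inside $[x]_E$; the aim is to leverage the second iteration step and $\Sigma^1_3$-generic-absoluteness to see this family as $I$-positive, contradicting smallness of $[x]_E$.

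The hard part will be exactly this last passage, from ``a perfect family of $M$-generics lands in $[x]_E$'' to ``$[x]_E$ is $I$-positive''. Under $\mathbf{\Sigma^1_2}$ $I$-measurability it would be immediate, since then the generics over $M$ form a co-$I$ set (Theorem \ref{sigma_1_2_and_L_generics}); but we assume only the weaker transcendence hypothesis, so $[x]_E$ may be $I$-positive over $M$ and the $M$-generics need not be $I$-positive. This is precisely where the strength of a perfect set of $\mathbb{P}_I*\dot{\mathbb{P}_I}$-generics (as opposed to single generics) must be spent: the second step supplies over $M$ mutually generic reals whose $E$-behaviour is analyzed inside a $\mathbb{P}_I$-extension of $M$ in which, via $\Sigma^1_3$-generic-absoluteness and $\mathbf{\Sigma^1_2}$-Borel uniformization, the witness $a$ is reflected and the coanalytic slice $\{v:\phi(x,v,a)\}$ is a universally Baire $I$-small set with a ground-model-coded Borel hull. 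The genuinely $\mathbf{\Sigma^1_2}$-specific difficulty is the remaining case, where no witness lies in $M[y]$ (equivalently, the Shoenfield branch for $x E y$ is cofinal in $\omega_1^{M}$ when the latter is countable in $V$); downward absoluteness fails here and one must again use $\Sigma^1_3$-generic-absoluteness to locate a witness in a suitable generic extension of $M$. These two reflection steps, together with the direction in which ``$B_c\in I$'' is absolute — governed by whether $I$ is $\mathbf{\Sigma^1_2}$ or $\mathbf{\Pi^1_2}$ — are where I expect essentially all of the work to lie; once they are in place the contradiction with $I$-smallness closes as in the measurable case.
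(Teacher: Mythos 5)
Your proposal has a genuine gap at its central step, and you have (correctly) located it yourself but not closed it. You reduce everything to the Core Claim, and to prove it you suppose $xEy$ and try to contradict the $I$-smallness of $[x]_E$ by producing, below the condition $C$, a perfect set of $\mathbb{P}_I$-generics over $M=L[z][x]$ inside $[x]_E$. But a perfect set need not be $I$-positive (for the null ideal there are perfect null sets), so this yields no contradiction. The inference you would need -- that the generics over an inner model below a condition form an $I$-positive set -- is precisely $\mathbf{\Sigma^1_2}$ $I$-measurability (Theorem \ref{sigma_1_2_and_L_generics}), which is strictly stronger than the transcendence hypothesis of the theorem; the sketch you offer in its place (second iteration step plus $\Sigma^1_3$-absoluteness plus uniformization ``reflecting the witness'') never becomes an argument. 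Moreover, the case distinction you flag as the genuinely $\mathbf{\Sigma^1_2}$-specific difficulty is spurious: $M[y]=L[z,x,y]$ is an inner model of $\mathbb{V}$ containing all the parameters, so ``$xEy$'' is absolute between $\mathbb{V}$ and $M[y]$ by Shoenfield in \emph{both} directions; the ``no witness in $M[y]$'' case cannot occur, and the real difficulty is the $I$-positivity passage above.

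The paper's proof avoids this trap by extracting $I$-positivity from generics over $\mathbb{V}$ itself rather than over $L$-like inner models. First it shows that no $B'\subseteq B$ can force (over $\mathbb{V}$) the generic into a ground-model class: otherwise all generics over a countable $M\preceq H_\theta$ lying in $B'$ would land in that class, and for a ccc ideal those generics form a co-$I$ subset of $B'$, making the class $I$-positive. Then it proves a density claim \emph{in $L$}: no iteration condition inside $B$ forces $z_1Ez_2$, via the hybrid argument of \cite{foreman_magidor}: take $z_1\in\mathbb{V}$ generic over $L$ in $B_1$ (this is the only place the transcendence hypothesis is used, besides the final step), then take $z_2\in\dot B_2[z_1]$ generic over $\mathbb{V}$ in a further forcing extension, and apply the first step to reach a contradiction. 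Finally, any pair from the given perfect set of $\mathbb{P}_{I}*\dot{\mathbb{P}_{I}}$-generics meets the resulting dense set, so $L[x][y]\models\neg(xEy)$, and Shoenfield transfers this to $\mathbb{V}$. This ``contradiction via forcing over $\mathbb{V}$, then density reflected into $L$'' structure is the idea missing from your outline. (A minor additional point: your regularity-harvesting step invokes Theorem \ref{prop_measurability_vs_transcendence_for_ccc}, which is stated only for $\mathbf{\Sigma^1_2}$ ideals, whereas the theorem at hand also allows $\mathbf{\Pi^1_2}$ ones.)
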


Then together with another result of \cite{mutual_generics} on the existence of a perfect set
of $\mathbb{P}_{I}*\dot{\mathbb{P}_{I}}$ generics, we have:

\begin{cor}
Let $I$ be $\mathbf{\mathbf{\Sigma_{2}^{1}}}$ or $\mathbf{\Pi_{2}^{1}}$, provably
ccc, homogeneous and with the Fubini property. If $\omega_1$ is inaccessible to the reals then $PSP_{I}(\mathbf{\Sigma_{2}^{1})}.$
\end{cor}

We remark that stronger large cardinal assumptions clearly imply $PSP_{I}(\mathbf{\Sigma_{2}^{1}})$ and more. For example, if a measurable cardinal exists, then for any proper $\sigma$-ideal $I$, $PSP_{I}(\mathbf{\Sigma_{3}^{1}})$  and $PSP_{I}(\mathbf{\Pi_{3}^{1}})$ are true. To see why, reread section 2 of \cite{my_paper} and replace the last line of the proof of claim 2.6 by the argument from \cite{my_Borel_canonization} theorem 3.9.

As to the necessary conditions:

\begin{thm}
Let $I$ be $\mathbf{\Sigma_{2}^{1}}$ and provably ccc. $PSP_{I}(\mathbf{\Sigma_{2}^{1}})$
implies that for every $B \in \mathbb{P}_I$ and for every real $z$ there exists a perfect set $P \subseteq B$ of $\mathbb{P}_{I}$-generics
over $L[z]$.
\end{thm}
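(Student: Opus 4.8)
The plan is to argue contrapositively: fixing $B\in\mathbb{P}_I$ and a real $z$ for which the conclusion fails, I would manufacture a $\mathbf{\Sigma^1_2}$ equivalence relation on $B$ with $I$-small classes and no perfect set of pairwise inequivalent elements, contradicting $PSP_I(\mathbf{\Sigma^1_2})$. The starting point is the standard characterization of genericity for definable ccc forcings: since $I$ is provably ccc and $\mathbf{\Sigma^1_2}$, a real $x\in B$ is $\mathbb{P}_I$-generic over $L[z]$ exactly when it avoids every $I$-small Borel set whose code lies in $L[z]$. Hence the set $N$ of non-generics over $L[z]$ is $\mathbf{\Sigma^1_2}(z)$ (an existential quantifier over codes $c\in L[z]$ with $B_c\in I$ and $x\in B_c$), while the set $G=\mathbb{R}\setminus N$ of generics is $\mathbf{\Pi^1_2}(z)$. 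I will also use that for a proper ideal singletons are $I$-small.

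The natural relation to try is $x\mathrel{E}y\iff x=y\ \lor\ (x,y\in N)$, collapsing all non-generics into one class and leaving each generic as a singleton. This relation is $\mathbf{\Sigma^1_2}$, and any perfect set of $E$-inequivalent elements meets the class $N$ in at most one point, so after deleting that point it contains a perfect set inside $G$ --- a perfect set of generics, which we assumed does not exist. Thus $E$ witnesses the failure of $PSP_I(\mathbf{\Sigma^1_2})$, \emph{provided} its classes are $I$-small. Here lies the crux: the singleton classes are fine, but the non-generic class $N\cap B$ need not be $I$-small (its smallness is precisely the $\mathbf{\Sigma^1_2}$-measurability-level statement of Theorem \ref{sigma_1_2_and_L_generics}, which is strictly stronger than what $PSP_I(\mathbf{\Sigma^1_2})$ should yield).

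To repair this I would split into cases according to whether $N\cap B$ is $I$-small. If it is, the relation above finishes the argument immediately. If $N\cap B$ is $I$-positive, I would instead refine the single non-generic class into $I$-small pieces using the $\mathbf{\Sigma^1_2}$ good wellordering $<_{L[z]}$ of the reals of $L[z]$: assign to each non-generic $x$ the $<_{L[z]}$-least code $c$ witnessing $x\in B_c\in I$ with $c\in L[z]$, and declare two non-generics equivalent iff they receive the same code. Each such class then sits inside a single $I$-small $B_c$, so all classes are $I$-small, and the non-generic part of any $E$-transversal injects into the $\le\aleph_1$ Borel codes of $L[z]$; since a subset of a perfect set that meets every perfect subset must have size continuum, a perfect subset of the transversal is forced to lie in $G$. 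The main obstacle --- and the technical heart of the proof --- is to carry out this refinement while keeping the relation $\mathbf{\Sigma^1_2}$ (the least-code definition naively costs an extra quantifier, pushing it to $\mathbf{\Sigma^1_3}$), and to make the final perfect-set extraction uniform (the cardinality argument is transparent when the continuum exceeds $\aleph_1$, but under $\mathsf{CH}$ one must instead code a single generic into a perfect set of generics). This is where I expect to lean hardest on the provable ccc-ness and $\mathbf{\Sigma^1_2}$-definability of $I$ --- to render the relevant smallness predicates and the initial segments of $<_{L[z]}$ sufficiently absolute and $\mathbf{\Delta^1_2}$ --- together with the splitting machinery underlying the $\mathbb{P}_I*\dot{\mathbb{P}_I}$ results used in the companion direction.
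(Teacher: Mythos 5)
Your core construction is the same as the paper's: in its proof of Theorem \ref{necessary_sigma_1_2} the paper defines $D(x,c)$ (``$c$ is the $\leq_L$-least code in $L$ of an $I$-small Borel set containing $x$'') and the induced $\mathbf{\Sigma^1_2}$ equivalence relation $E$ whose classes are the generic singletons and, for each code, the set of non-generics attached to it --- exactly your refined relation. The definability worry you flag (the least-code clause naively costing a quantifier) is real but routine; the paper dispatches it by deciding $D(x,c)$ inside a countable model of a large enough fragment of $ZFC$, keeping it $\mathbf{\Sigma^1_2}$. The genuine gap is your endgame. From the perfect set $P$ of pairwise $E$-inequivalent reals you extract a perfect set of generics by pure counting: the non-generic part of $P$ injects into $BC\cap L[z]$, hence has size at most $\aleph_1$, and a subset of $P$ of size $<\mathfrak{c}$ misses some perfect subset of $P$. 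This only works when $\aleph_1<\mathfrak{c}$. Under $CH$ the non-generic part of $P$ can have size $\mathfrak{c}$ and could, for all you have shown, meet every perfect subset of $P$ (like a Bernstein set), so you obtain no perfect set of generics and no contradiction. Moreover the hypothesis $PSP_I(\mathbf{\Sigma^1_2})$ is consistent with $CH$ (for the meager ideal this follows from Corollary \ref{meager case}, e.g.\ in an extension of $L$ by $\omega_1$ Cohen reals), so the case you leave open is the substantive one, not a degenerate one.

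Your proposed fallback for $CH$ --- ``code a single generic into a perfect set of generics'' --- is not an available tool at this level of generality: upgrading one generic over $L[z]$ to a perfect set of generics is precisely the kind of statement that is problematic here (for the null ideal the nearby question is Brendle's open problem, \cite{mutual_generics} 2.8, discussed after Corollary \ref{null case}), and the $\mathbb{P}_I*\dot{\mathbb{P}_I}$ machinery you invoke is what the paper uses for the \emph{converse} direction (Theorem \ref{main_thm_Sigma_1_2 mutually_generics}), under homogeneity and Fubini hypotheses not assumed in this theorem. The paper closes the gap uniformly in the continuum by a forcing-absoluteness argument rather than counting: $PSP_I(\mathbf{\Sigma^1_2})$ implies $PSP_{countable}(\mathbf{\Delta^1_2})$, hence by Theorem \ref{thm_countable_ideal)} and \cite{ikegami_sacks} Sacks forcing preserves $\Sigma^1_3$ statements; since ``there is a perfect set of $\mathbb{P}_I$-generics over $L[z]$ inside $B$'' is $\Sigma^1_3$, it suffices to make it true in a Sacks extension. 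The perfect set $P$ is used as a Sacks condition: Sacks forcing below $P$ adds a perfect set of new elements of $P$; by Shoenfield, $P$ stays pairwise $E$-inequivalent and ground-model classes disjoint from $P$ stay disjoint from it, so every new element of $P$ lies in a new $E$-class; and if a new element satisfied $D(x,c)$ for some ground-model $c$, then ``$\exists y\in B\ D(y,c)$'' would hold in the extension, a $\mathbf{\Sigma^1_2}$ statement that reflects down and exhibits its class as an old one --- contradiction. So all the new elements of $P$ are $\mathbb{P}_I$-generic over $L[z]$, and $\Sigma^1_3$ Sacks absoluteness pulls the perfect set back to the ground model. You could keep your argument for $\aleph_1<\mathfrak{c}$ and patch the $CH$ case this way, but note that the Sacks step is the heart of the proof, not a technical afterthought.
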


\begin{thm}
Let $I$ be  $\mathbf{\Sigma_{2}^{1}}$ and provably ccc. $PSP_{I}(\mathbf{\Delta_{2}^{1}})$
implies that for every $B \in \mathbb{P}_I$ and for every real $z$ there exists a $\mathbb{P}_{I}$-generic over $L[z]$ in $B$.
\end{thm}

At that point we can answer problem \ref{problem} (1) positively for a wide class of ccc $\sigma$-ideals:

\begin{cor}
Let $I$ be  $\mathbf{\Sigma_{2}^{1}}$ and provably ccc. The following are equivalent:
\begin{enumerate}
\item $PSP_{I}(\mathbf{\Delta_{2}^{1}})$.
\item $\mathbf{\Delta_{2}^{1}}$ sets are $I$-measurable.
\end{enumerate}
\end{cor}

However, problem \ref{problem} (2) has a negative answer:

\begin{cor}
The following are equivalent:
\begin{enumerate}
\item $PSP_{meager}(\mathbf{\Sigma_{2}^{1})}.$
\item $\mathbf{\Delta_{2}^{1}}$ sets have the Baire property.
\end{enumerate}
\end{cor}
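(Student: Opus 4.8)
The plan is to read this off as the $I=meager$ instance of the machinery built in the previous sections, exploiting two features special to this ideal. First, $\mathbb{P}_{meager}$ is forcing-equivalent to Cohen forcing $\mathbb{C}$, so ``$\mathbb{P}_{meager}$-generic over $L[z]$'' means ``Cohen-generic over $L[z]$''. Second, since $meager$ is ccc and Borel generated, the proposition characterizing $I$-measurability via Borel symmetric difference tells us that ``$\mathbf{\Delta^1_2}$ is $meager$-measurable'' is literally ``$\mathbf{\Delta^1_2}$ has the Baire property'' (as $A\triangle B\in meager$ is the Baire property). I will also note that $meager$ is provably ccc, provably Borel (hence $\mathbf{\Delta^1_2}$), Borel generated, with $\mathbb{P}_{meager}$ strongly arboreal, so Ikegami's equivalences (Theorem \ref{delta_1_2_and_L_generics}) and the sufficient/necessary-condition theorems of Sections \ref{sec:sigma_1_2 transcendence to PSP}--\ref{sec:sigma_1_2 psp to transencdence} all apply with $I=meager$.

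The direction $(1)\Rightarrow(2)$ is the easy one. Since every $\mathbf{\Delta^1_2}$ equivalence relation is $\mathbf{\Sigma^1_2}$, $PSP_{meager}(\mathbf{\Sigma_{2}^{1}})$ trivially implies $PSP_{meager}(\mathbf{\Delta_{2}^{1}})$; the penultimate corollary (with $I=meager$) then gives that $\mathbf{\Delta^1_2}$ sets are $meager$-measurable, i.e. have the Baire property. (Equivalently one can feed $PSP_{meager}(\mathbf{\Delta^1_2})$ into the last necessary-condition theorem to get a Cohen real over $L[z]$ in every $B\in\mathbb{P}_{meager}$, and then apply Theorem \ref{delta_1_2_and_L_generics}.)

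The substantive direction is $(2)\Rightarrow(1)$. By Theorem \ref{delta_1_2_and_L_generics} for $meager$, the Baire property of $\mathbf{\Delta^1_2}$ sets is equivalent to: for every real $w$ there is a Cohen real over $L[w]$. To invoke the sufficient-condition theorem I must upgrade this single-generic statement to the existence, for every $z$ and every non-meager Borel $B$, of a perfect set $P\subseteq B$ of pairwise $\mathbb{P}_{meager}*\dot{\mathbb{P}_{meager}}$-generic reals over $L[z]$; for Cohen forcing this means a perfect set of pairwise \emph{mutually} Cohen-generic reals, since $(x,y)$ is $\mathbb{C}*\dot{\mathbb{C}}$-generic iff $x$ is Cohen over $L[z]$ and $y$ is Cohen over $L[z][x]$. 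Here is the construction. Fix $z,B$; let $b$ be a Borel code for $B$, put $w=z\oplus b$, and fix $s$ with $B$ comeager in $[s]$. Take a Cohen-generic $g\in 2^{2^{<\omega}}$ over $L[w]$ and set $c_x(n)=g(x\restriction n)$ for $x\in 2^\omega$. The map $x\mapsto s^\frown c_x$ is continuous and, after restricting to a perfect set of $x$'s, injective, so its range $P$ is perfect. For $x\ne y$ splitting at level $k$, the reals $c_x,c_y$ are read off \emph{disjoint} coordinates of $g$ above level $k$, so they are mutually Cohen-generic over $L[w]$, hence over $L[z]$; prepending the fixed string $s\in L[w]$ and their finite initial agreement do not affect genericity. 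Finally each $s^\frown c_x$ is Cohen over $L[w]\ni b$ and lies in $[s]$, so it avoids the meager set $[s]\setminus B$ coded in $L[w]$ and thus lies in $B$. Hence $P\subseteq B$, and the sufficient-condition theorem yields $PSP_{meager}(\mathbf{\Sigma_{2}^{1}})$.

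I expect the upgrade step to be the crux, and it is exactly where the meager ideal behaves specially. For a general provably ccc ideal one needs an extra hypothesis (as in the earlier corollary, $\omega_1$ inaccessible to the reals) to manufacture a perfect set of mutual generics, whereas for Cohen forcing the decoding $g\mapsto\{c_x\}$ converts one generic into a perfect family of pairwise mutually generic reals, so the weaker assumption ``$\mathbf{\Delta^1_2}$ has the Baire property'' already suffices. This is precisely what makes Problem \ref{problem}(2) fail here: $PSP_{meager}(\mathbf{\Sigma_{2}^{1}})$ descends to the $\mathbf{\Delta^1_2}$ level rather than demanding $\mathbf{\Sigma^1_2}$-measurability. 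The only points needing careful (but routine) verification are the disjointness-of-coordinates argument for mutual genericity and the passage to a perfect, injective reparametrization.
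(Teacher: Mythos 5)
Your proof has the same skeleton as the paper's: for $(1)\Rightarrow(2)$ you pass through $PSP_{meager}(\mathbf{\Delta_{2}^{1}})$, the corollary identifying $PSP_I(\mathbf{\Delta_{2}^{1}})$ with $I$-measurability, and the fact that meager-measurability is the Baire property; for $(2)\Rightarrow(1)$ you pass through the existence of Cohen reals over every $L[z]$, an upgrade to a perfect set of pairwise mutually Cohen-generic reals inside every non-meager Borel $B$, and then theorem \ref{main_thm_Sigma_1_2 mutually_generics}. The one place you diverge is that the paper's entire written proof consists of citing \cite{mutual_generics} 1.1 for exactly that upgrade step, whereas you set out to prove it. The construction you pick (reading reals off the branches of $2^{<\omega}$ labelled by a single Cohen generic $g\in 2^{2^{<\omega}}$, prepending $s$, and absorbing the code of $B$ into $w$) is indeed the standard proof of that lemma, and the surrounding bookkeeping is fine: $s^{\frown}c_x$ stays Cohen over $L[w]$, avoids the meager set $[s]\setminus B$ coded in $L[w]$, hence lies in $B$, and a continuous injection of $2^\omega$ has perfect range.

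The gap is in the one sentence you flag as routine: ``$c_x,c_y$ are read off disjoint coordinates of $g$, so they are mutually Cohen-generic over $L[w]$.'' Disjointness of the coordinate sets yields product-genericity only when those sets belong to $L[w]$, so that the forcing $2^{2^{<\omega}}$ factors \emph{in $L[w]$} as a product; but your perfect set must be free for \emph{all} pairs of branches $x\neq y$ in $V$, and such branches are in general not in $L[w]$ --- indeed one of them, the ``diagonal'' branch $x^{*}$ defined by $x^{*}(n)=g(x^{*}\restriction n)$ (for which $c_{x^{*}}=x^{*}$), is explicitly constructed from $g$ itself, so no factorization argument over $L[w]$ can apply to it. As stated the inference is unsound. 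The correct argument is a density argument carried out entirely in $L[w]$: for each pair of incomparable nodes $(\sigma_0,\tau_0)$ and each dense open $D\in L[w]$ on $\mathbb{C}\times\mathbb{C}$, one shows dense the set of conditions $p$ (say with domain all nodes of length $\leq N$) such that every pair of length-$N$ nodes extending $\sigma_0$ and $\tau_0$ respectively already has a pair of read-off initial segments lying in $D$; proving this density requires extending whole cones uniformly, so that only finitely many distinct read-off strings must be steered into $D$, one pair at a time, and the witness then persists to all further extensions and all branches. This density argument is the actual mathematical content of \cite{mutual_generics} 1.1, not a routine disjointness check; either carry it out or cite the lemma, as the paper does --- with that repaired, the rest of your argument is correct.
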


Trying to characterize $PSP_{null}(\mathbf{\Sigma_{2}^{1})}$ leads to an open problem of Brendle (\cite{mutual_generics} 2.8):  is the existence of a perfect set of random reals equivalent to the existence of
a perfect set of mutually random reals? A positive answer will imply that $PSP_{null}(\mathbf{\Sigma_{2}^{1})}$ is equivalent to the existence of a perfect set of random reals over $L[z]$ for any $z$.

Another problem yet to be solved is characterizing $PSP_{countable}(\mathbf{\Sigma_{2}^{1})}.$
In light of theorem \ref{thm_countable_ideal)} and \cite{brendle_lowe} 7.1, we conjecture:

\begin{problem}
If for every $z$ there is $x\notin L[z]$, then $PSP_{countable}(\mathbf{\Sigma_{2}^{1})}$.
\end{problem}

\subsection{Acknowledgments}

This research was carried out under the supervision of Menachem Magidor,
and would not be possible without his elegant ideas and deep insights.
The author would like to thank him for his dedicated help. The author
would also like to thank Asaf Karagila for useful discussions, and
Amit Solomon for a fruitful and surprising collaboration around the
last section of this paper. 

\section{\label{sec:---measruable}$I$-measurable sets}

\begin{defn}
\label{def: I measurable}Let $I$ be a $\sigma$-ideal, and $A\subseteq\mathbb{R}.$
We say that $A$ is \emph{ $I$-measurable} if for every $B\in\mathbb{P}_{I}$,
there is $B'\subseteq B$ Borel $I$-positive such that either $B'\subseteq A$ or $B'\subseteq^{\sim}A$.
\end{defn}

\begin{rem}
The following are easy to observe:
\begin{enumerate}
\item Borel sets are measurable.
\item There is a non measurable set.
\item If $A$ is $I$-measurable, then either $A$ or $^{\sim}A$ contain
a Borel $I$-positive set.
\item If $I$ is such that $I$-positive sets contain $I$-positive Borel sets, then all
$I$-small sets are measurable. In that case, a set $A$ will be
$I$ small if and only if for every $B\in\mathbb{P}_{I}$ there is
$B'\subseteq B$ in $\mathbb{P}_{I}$ such that $B'\subseteq^{\sim}A.$
\end{enumerate}
\end{rem}

\begin{prop}
If $I$ is ccc and Borel generated, then $A$ is $I$-measurable if and only if there
is $B$ Borel such that $A\triangle B\in I$. Therefore, for $I$ ccc and Borel generated,
the $I$-measurable sets form a $\sigma$-algebra.
\end{prop}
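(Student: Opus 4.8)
The plan is to prove the stated characterization in both directions and then read off the $\sigma$-algebra assertion as a formal consequence. Throughout I would use two features of the hypotheses: that, $I$ being Borel generated, every $I$-small set sits inside a Borel $I$-small set, and that in $\mathbb{P}_{I}$ two Borel $I$-positive sets are incompatible exactly when their intersection lies in $I$ (so that ccc means every family of Borel $I$-positive sets with pairwise $I$-small intersections is countable).

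For the easy direction—$A\triangle B\in I$ for some Borel $B$ implies $I$-measurability—I would first invoke Borel generatedness to fix a Borel $I$-small set $N\supseteq A\triangle B$, so that $A$ and $B$ agree off $N$. Given any $C\in\mathbb{P}_{I}$, the set $C\setminus N$ is Borel and $I$-positive (since $N\in I$ and $I$ is a $\sigma$-ideal), and the two Borel sets $(C\setminus N)\cap B$ and $(C\setminus N)\setminus B$ partition it, so one of them is $I$-positive. Off $N$ we have $B\subseteq A$ and ${}^{\sim}B\subseteq{}^{\sim}A$, so whichever piece is $I$-positive is a Borel $I$-positive subset of $C$ contained in $A$ or in ${}^{\sim}A$, which is exactly what measurability demands.

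The converse is the main step and is where ccc is used. Assuming $A$ is $I$-measurable, the family $\mathcal{D}$ of Borel $I$-positive sets contained either in $A$ or in ${}^{\sim}A$ is dense in $\mathbb{P}_{I}$, immediately by the definition of measurability. I would take a maximal antichain $\{B_{n}:n\in\omega\}$ inside $\mathcal{D}$, which is countable by ccc and is a maximal antichain of $\mathbb{P}_{I}$ because $\mathcal{D}$ is dense. Maximality forces the Borel set $\mathbb{R}\setminus\bigcup_{n}B_{n}$ to be $I$-small: were it $I$-positive, density of $\mathcal{D}$ would yield an element of $\mathcal{D}$ below it, disjoint from and hence incompatible with every $B_{n}$, contradicting maximality. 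Setting $B=\bigcup\{B_{n}:B_{n}\subseteq A\}$, a Borel set with $B\subseteq A$, I would then note that any $x\in A\setminus B$ lies either in $\mathbb{R}\setminus\bigcup_{n}B_{n}$ or in some $B_{n}$ with $B_{n}\subseteq{}^{\sim}A$; the latter is impossible since such $B_{n}$ is disjoint from $A$, so $A\setminus B\subseteq\mathbb{R}\setminus\bigcup_{n}B_{n}\in I$, whence $A\triangle B=A\setminus B\in I$. I expect this predensity-and-covering computation to be the delicate point: one must check both that the "wrong side" pieces contribute nothing to $A\setminus B$ and that the leftover $\mathbb{R}\setminus\bigcup_{n}B_{n}$ is genuinely $I$-small via maximality.

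Finally, the $\sigma$-algebra claim is immediate from the characterization, which identifies the $I$-measurable sets with $\{B\triangle N:B\text{ Borel},\ N\in I\}$. Using the identity ${}^{\sim}(B\triangle N)={}^{\sim}B\triangle N$ one sees closure under complements, and using $\bigcup_{n}(B_{n}\triangle N_{n})\triangle\bigcup_{n}B_{n}\subseteq\bigcup_{n}N_{n}$ together with the fact that $I$ is a $\sigma$-ideal (so $\bigcup_{n}N_{n}\in I$) one gets closure under countable unions; Borel sets are of this form trivially. I anticipate no obstacle here beyond routine bookkeeping with symmetric differences.
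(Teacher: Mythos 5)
Your proof is correct and follows essentially the same route as the paper's: the easy direction via a Borel $I$-small superset of $A\triangle B$, and the converse via a maximal antichain (countable by ccc) inside the dense set of conditions decided by $A$, with the leftover complement shown $I$-small by maximality. You additionally spell out the symmetric-difference bookkeeping for the $\sigma$-algebra claim, which the paper leaves as an immediate consequence; this is fine and changes nothing essential.
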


\begin{proof}
First assume there is  a Borel set $B$ such that $A\triangle B \in I$, and fix $C\in I$ such that $A\triangle B \subseteq C$. Given a condition $D$, $D-C$ is $I$ positive and disjoint of $A \triangle B$. Hence, $(D-C)\cap B \subseteq A$ and $(D-C)\cap (^{\sim}B) \subseteq ^{\sim}A$.

For the other
direction, let $A$ be $I$-measurable. The set \[D=\{B\ :\ B \in \mathbb{P}_I;\ B\subseteq A\ or\ B\subseteq ^{\sim}A\}\] is dense -- let $B_{n}$ be a maximal antichain of elements of $D$. Define 
\[
B=\bigcup_{B_{n}'\subseteq A}B_{n}'
\]
and
\[
C=\bigcup_{B_{n}'\subseteq ^{\sim} A}B_{n}'.
\]
The complement $^{\sim}(B\cup C)$ must be $I$-small, otherwise we could extend the maximal antichain. We then claim that $B$ is the required approximation of $A$, since $B \subseteq A$ and
\[
A-B \subseteq ^{\sim}(B\cup C)\]
which is $I$-small.
\end{proof}

The last proposition shows that definition \ref{def: I measurable}
coincides with the traditional definition for a wide class of ccc ideals.
For many other examples of $\sigma$-ideals, definition \ref{def: I measurable}
is not new as well. For example, for the case of the countable ideal the notion
of Sacks measurability is well known for years. A set $A\subseteq\mathbb{R}$
is Sacks measurable if for any perfect set $P$ there is $P'\subseteq P$
perfect such that $P'\subseteq A$ or $P'\subseteq^{\sim}A$ -- exactly
the same as the definition discussed here.

We list related regularity properties of universally Baire and projective
sets. $M\preceq H_\theta$ always mean that $M$ is a countable elementary submodel of a large enough $H_\theta$.

\begin{prop}
\label{UB_measurability}
If $A$ is universally Baire and $I$ is a proper $\sigma$-ideal, then $A$ is
$I$-measurable. Furthermore, some $B\in\mathbb{P}_{I}$ forces
$x_{gen}\in A$ if and only if $A$ contains a Borel $I$-positive
subset.
\end{prop}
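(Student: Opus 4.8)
The plan is to exploit the absolute Souslin (tree) representation of universally Baire sets together with the properness of $\mathbb{P}_I$, via a countable elementary submodel argument. By the Feng--Magidor--Woodin tree characterization, $A$ being universally Baire yields trees $T,S$ on $\omega\times\kappa$ (for some ordinal $\kappa$) with $A=p[T]$ and ${}^{\sim}A=p[S]$, such that $p[T]$ and $p[S]$ remain complementary in every set-forcing extension. In particular the reinterpretation $A^{V[G]}:=p[T]^{V[G]}$ in a $\mathbb{P}_I$-extension is well defined, and throughout ``$x_{gen}\in A$'' means $x_{gen}\in p[\dot T]$. I would establish the ``furthermore'' equivalence first and then read off $I$-measurability from it.

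The direction ``$A$ contains a Borel $I$-positive subset $\Rightarrow$ some $B$ forces $x_{gen}\in A$'' is the soft one. If $C\subseteq A$ is Borel $I$-positive, write $C=p[U]$ for a tree $U$ on $\omega\times\omega$; then $C\subseteq A$ says $p[U]\cap p[S]=\emptyset$, i.e.\ the product tree $U\otimes S$ is wellfounded, which is absolute to $V[G]$. Hence $C^{V[G]}\subseteq p[T]^{V[G]}=A^{V[G]}$, and since a Borel condition forces the generic real into its own reinterpretation, $C\Vdash x_{gen}\in C^{V[G]}\subseteq A$.

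The substantive direction is ``some $B\in\mathbb{P}_I$ forces $x_{gen}\in A$ $\Rightarrow$ $A$ contains a Borel $I$-positive subset''. Fix such a $B$, so $B\Vdash x_{gen}\in p[\dot T]$. Take a countable $M\preceq H_\theta$ with $I,B,T\in M$, let $\pi\colon M\to\bar M$ be the transitive collapse and $\bar T=\pi(T)$, a tree on $\omega\times\bar\kappa$ with $\bar\kappa$ countable. Since $\mathbb{P}_I$ is proper, by Zapletal's characterization \cite{zapletal_book} there is a Borel $I$-positive set $B_M\subseteq B$ every element of which is $\mathbb{P}_I$-generic over $M$. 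For $x\in B_M$ the filter $G_x=\{D\in\mathbb{P}_I\cap M: x\in D\}$ is $M$-generic, contains $B$, and has $x$ as its generic real; by elementarity $M\models(B\Vdash x_{gen}\in p[\dot T])$, so the forcing theorem inside $M$ gives $M[x]\models x\in p[\bar T]$, i.e.\ there is $\bar y\in\bar\kappa^{\,\omega}\cap M[x]$ with $(x,\bar y)\in[\bar T]$. As $M[x]\subseteq V$ this branch is genuine, and pushing it back through $\pi^{-1}$ yields $(x,\pi^{-1}\circ\bar y)\in[T]$, whence $x\in p[T]=A$. Thus $B_M\subseteq A$ is the desired Borel $I$-positive subset, and since $B_M\subseteq B$ the argument is local.

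Finally, $I$-measurability follows. Given any $B_0\in\mathbb{P}_I$, force below $B_0$; in $V[G]$ the real $x_{gen}$ lies in exactly one of the complementary sets $p[T]^{V[G]}$, $p[S]^{V[G]}$, so some condition below $B_0$ forces $x_{gen}\in A$, or else $B_0$ itself forces $x_{gen}\in{}^{\sim}A$. Applying the local form of the main direction to $A$, or to the (also universally Baire) complement ${}^{\sim}A=p[S]$, produces a Borel $I$-positive $B'\subseteq B_0$ with $B'\subseteq A$ or $B'\subseteq{}^{\sim}A$, which is exactly $I$-measurability. The step I expect to be the main obstacle is the uncountability of $\kappa$: the witness $\bar y$ for $x_{gen}\in p[\dot T]$ a priori lives in an uncountable power and need not be captured in the ground model. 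The countable-submodel collapse is precisely what tames $\kappa$ to a countable ordinal, so that $(x,\bar y)\in[\bar T]$ becomes an absolute, real-coded statement, while properness is what guarantees that the set of $M$-generic reals contains a genuine Borel $I$-positive condition rather than a merely positive set.
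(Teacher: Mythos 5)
Your proof is correct and follows essentially the same route as the paper's: a countable elementary submodel $M\preceq H_\theta$, properness (Zapletal's characterization) to get a Borel $I$-positive set of $M$-generic reals below a condition deciding $x_{gen}\in A$, and absoluteness of the universally Baire representation to pull membership in $A$ (or ${}^{\sim}A$) back to $\mathbb{V}$. The paper compresses the entire tree/collapse argument into the phrase ``using the universally Baire definition of $A$,'' which you spell out in detail, but the substance is identical.
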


\begin{proof}
Let $B\in\mathbb{P}_{I}$ and let $A,B\in M\preceq H_{\theta}$ a countable elementary submodel of a large enough $H_\theta$ . Let $B'\subseteq B$
be such that $B'\Vdash x_{gen}\in A$ or $B'\Vdash x_{gen}\notin A$. Let
$B''\subseteq B'$ be the set of $M$-generics. 
Using the universally Baire definition of $A$ we find that $B''\subseteq A$
or $B''\subseteq^{\sim}A$.
\end{proof}

\begin{cor}
Let $A$ be universally Baire and $I$ a ccc $\sigma$-ideal. Then $A$ is either contained in a Borel
$I$-small set, or contains a Borel $I$-positive set.
\end{cor}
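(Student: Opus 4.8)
The plan is to reduce the corollary to the $I$-measurability of $A$ already established in Proposition \ref{UB_measurability}, and then to ``globalize'' the local dichotomy using the countable chain condition. First I would observe that since $I$ is ccc the forcing $\mathbb{P}_I$ is ccc and hence proper, so $I$ is a proper $\sigma$-ideal and Proposition \ref{UB_measurability} applies: $A$ is $I$-measurable. Thus for every $B\in\mathbb{P}_I$ there is a Borel $I$-positive $B'\subseteq B$ with $B'\subseteq A$ or $B'\subseteq{}^{\sim}A$.

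Next I would split along the two alternatives of the desired dichotomy. If there is even one $B\in\mathbb{P}_I$ with $B\subseteq A$, then $A$ contains a Borel $I$-positive set and we are in the second case. Otherwise no Borel $I$-positive set is a subset of $A$; combined with $I$-measurability this forces the set $D'=\{B\in\mathbb{P}_I\ :\ B\subseteq{}^{\sim}A\}$ to be dense in $\mathbb{P}_I$, since for any $B$ the witness $B'$ cannot satisfy $B'\subseteq A$ and must therefore satisfy $B'\subseteq{}^{\sim}A$.

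The crucial step then mirrors the maximal-antichain construction used in the proof of the ccc, Borel-generated proposition above. I would take a maximal antichain $\{B_n\}_{n<\omega}$ consisting of elements of $D'$; by the countable chain condition it is countable, so $\bigcup_n B_n$ is Borel and $N:={}^{\sim}\bigcup_n B_n$ is Borel as well. A standard maximality argument shows $N$ is $I$-small: were it $I$-positive, density of $D'$ would supply a condition inside $N$ disjoint from every $B_n$, contradicting maximality. Finally, since each $B_n\subseteq{}^{\sim}A$ we have $A\subseteq{}^{\sim}B_n$ for all $n$, hence $A\subseteq\bigcap_n{}^{\sim}B_n={}^{\sim}\bigcup_n B_n=N$, placing $A$ inside a Borel $I$-small set. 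The two alternatives are mutually exclusive because a Borel $I$-positive subset of $A$ could not lie inside a Borel $I$-small superset.

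I expect the only point requiring genuine care to be the appeal to ccc: it is precisely the countability of the maximal antichain that guarantees $\bigcup_n B_n$, and hence $N$, is genuinely Borel, which is what upgrades the pointwise $I$-measurability dichotomy into the single global Borel set demanded by the statement; without ccc one would at best obtain an uncountable union, which need not be Borel. I would also note that universal Baireness is used only to enter Proposition \ref{UB_measurability}, and that the remainder of the argument works verbatim for \emph{any} $I$-measurable set whenever $I$ is ccc.
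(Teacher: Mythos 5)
Your proof is correct, but it takes a genuinely different route from the paper's. The paper's proof of this corollary re-enters the forcing argument: it splits on whether some condition forces $x_{gen}\in A$ (in which case the \emph{furthermore} clause of Proposition \ref{UB_measurability} immediately yields a Borel $I$-positive subset of $A$) or the empty condition forces $x_{gen}\notin A$; in the latter case it uses universal Baireness once more to conclude that all $M$-generics lie in $^{\sim}A$, and then uses ccc to see that the set of $M$-generics contains a Borel co-$I$ set (each maximal antichain in $M$ is countable, so its union is Borel with $I$-small complement), whence $A$ sits inside a Borel $I$-small set. You instead use only the bare $I$-measurability conclusion of Proposition \ref{UB_measurability} and then globalize it by pure order combinatorics: if no condition sits inside $A$, the conditions inside $^{\sim}A$ are dense, a maximal antichain of them is countable by ccc, and the complement of its (Borel) union is an $I$-small Borel set covering $A$. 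This is essentially the same antichain argument the paper uses for its earlier proposition on ccc Borel-generated ideals, redeployed here. What your version buys is generality and economy of hypotheses: it shows that for any ccc $\sigma$-ideal, \emph{every} $I$-measurable set in the sense of Definition \ref{def: I measurable} satisfies the dichotomy, with universal Baireness needed only to invoke measurability; the paper's version instead leans a second time on the tree representation of $A$ and on the elementary-submodel machinery it had already set up, which makes it shorter given that machinery but less modular. Both arguments use ccc at exactly one point, and for the same purpose: to make a maximal antichain countable so that its union is genuinely Borel.
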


\begin{proof}
If some condition forces $x_{gen}\in A$, then by the last proposition, $A$ contains a Borel
$I$-positive set. Otherwise, $\Vdash_{\mathbb{P}_{I}}x_{gen}\in^{\sim}A$.
Hence the $M$-generics are all in $^{\sim}A$, so $^{\sim}A$ contains
a Borel co-$I$ set, and $A$ is contained in a Borel $I$-small set.
\end{proof}

\begin{prop}
\label{measurable_implies_sigma_1_2_measurable}
If there is a measurable cardinal and $I$ is a proper $\sigma$-
ideal, then $\mathbf{\Sigma_{2}^{1}}$ sets are $I$-measurable.
\end{prop}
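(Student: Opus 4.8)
The plan is to reduce to Proposition \ref{UB_measurability}, which already establishes that every universally Baire set is $I$-measurable for proper $I$. Thus it suffices to show that a measurable cardinal $\kappa$ supplies enough absoluteness of $\mathbf{\Sigma_2^1}$ sets to rerun that argument against the one forcing that matters, namely $\mathbb{P}_I$.

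First I would invoke the Martin--Solovay analysis. From a measurable cardinal $\kappa$ one obtains, for each $\mathbf{\Sigma_2^1}(z)$ set $A$, a complementing pair of trees: the Shoenfield tree $T$ on $\omega\times\omega_1$ with $p[T]=A$, and the Martin--Solovay tree $S$ with $p[S]={}^{\sim}A$, whose projections remain complementary in every generic extension by a poset of size $<\kappa$. In the terminology of Feng--Magidor--Woodin this says precisely that $A$ is $\kappa$-universally Baire. The key arithmetical point is that a measurable cardinal is inaccessible, so $2^{\aleph_0}<\kappa$; since $|\mathbb{P}_I|\le 2^{\aleph_0}$, the forcing $\mathbb{P}_I$ is small relative to $\kappa$, and hence the trees $T,S$ witness the universally Baire behaviour of $A$ against $\mathbb{P}_I$ itself.

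Second, I would observe that the proof of Proposition \ref{UB_measurability} uses universal Baireness of $A$ only against this single forcing: given $B\in\mathbb{P}_I$ one takes a countable $M\preceq H_\theta$ with $A,B,T,S\in M$, passes to $B'\subseteq B$ deciding ``$x_{gen}\in A$'', and reads off the answer on the set $B''\subseteq B'$ of $M$-generics through the tree representation. What makes this step legitimate is exactly the absoluteness of $p[T]$ and $p[S]$ between $M$, $V$, and the $\mathbb{P}_I$-extension, which $\kappa$-universal Baireness provides because $|\mathbb{P}_I|<\kappa$. By elementarity $M$ computes $T,S$ correctly, and the generic real's membership in $A$ is read off correctly, so the verbatim argument of Proposition \ref{UB_measurability} delivers $B''\subseteq A$ or $B''\subseteq{}^{\sim}A$; as $B''$ is $I$-positive by properness, $A$ is $I$-measurable.

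The only genuinely delicate point, and the one I would flag as the main obstacle, is recognizing that \emph{$\kappa$-universal} Baireness suffices here rather than full universal Baireness. A single measurable does not make $\mathbf{\Sigma_2^1}$ sets universally Baire in the absolute sense (by Feng--Magidor--Woodin that requires sharps for all sets, not merely for sets below $\kappa$). The argument survives only because Proposition \ref{UB_measurability} never forces with anything larger than $\mathbb{P}_I$, and the size estimate $|\mathbb{P}_I|\le 2^{\aleph_0}<\kappa$ keeps us strictly within the range where the Martin--Solovay trees remain complementing. Once this reconciliation of scales is made explicit, the tree absoluteness does all the remaining work.
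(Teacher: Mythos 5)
Your proof is correct, but it takes a genuinely different route from the paper's. The paper never constructs tree representations: it reuses the skeleton of Proposition \ref{UB_measurability} (fix a countable $M\preceq H_{\theta}$ containing the measurable cardinal, pass to $B'\subseteq B$ deciding ``$x_{gen}\in A$'', and let $B''$ be the set of $M$-generics in $B'$, which is $I$-positive by properness), but it secures correctness of the decision without universal Baireness. When $B'\Vdash x_{gen}\in A$, upward Shoenfield absoluteness from $M[x]$ to $\mathbb{V}$ suffices. In the hard case $B'\Vdash x_{gen}\notin A$, the paper argues by contradiction: if an $M$-generic $x\in B''$ were in $A$, there is $\alpha<\omega_{1}$ such that every inner model in which $\alpha$ is countable sees $x\in A$; since $M$ contains the measurable, $N=M[x]$ is iterable, and iterating it $\omega_{1}$-many times produces $N_{\omega_{1}}$ containing all countable ordinals, in particular $\alpha$; then $N_{\omega_{1}}[coll(\omega,\alpha)]\models x\in A$, Shoenfield absoluteness pulls this down to $N_{\omega_{1}}\models x\in A$, and elementarity of the iteration map contradicts $M[x]\models x\notin A$. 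You instead package the strength of the measurable into the Martin--Solovay machinery: $\mathbf{\Sigma_{2}^{1}}$ sets are $\kappa$-universally Baire, $|\mathbb{P}_{I}|\le 2^{\aleph_{0}}<\kappa$ by inaccessibility, and then Proposition \ref{UB_measurability} applies verbatim. Your flagged point of delicacy is exactly right -- a single measurable gives only $\kappa$-universal Baireness, and the reduction survives only because $\mathbb{P}_{I}$ is a small forcing -- and it is the step a naive reduction would miss. Comparing the two: your argument is more modular and yields slightly more for free (the ``furthermore'' clause of Proposition \ref{UB_measurability}, hence the forcing characterization of containing an $I$-positive Borel set, and $I$-measurability of $\mathbf{\Pi_{2}^{1}}$ sets as well), at the cost of invoking the heavier classical black box of weakly homogeneous trees; the paper's argument is self-contained given only Shoenfield absoluteness and iterability of countable structures, and is the same technique as \cite{my_Borel_canonization} theorem 3.9, which the text cites.
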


\begin{proof}
Let $A$ be $\Sigma_{2}^{1}$ and $B\in\mathbb{P}_{I}$. We may assume
that $B\Vdash x_{gen}\in A$ or $B\Vdash x_{gen}\notin A.$ Now let
$M\preceq H_{\theta}$ contain all the relevant information and the
measurable cardinal. Let $B'\subseteq B$ be the set of $M$-generics
in $B$. There are 2 cases:

\begin{itemize}
\item $B\Vdash x_{gen}\in A$: We show that $B'\subseteq A$. Indeed, let
$x\in B'.$ Then $M[x]\models x\in A$, hence $x\in A$. 
\item $B\Vdash x_{gen}\notin A$: We show that $B'\subseteq^{\sim}A.$ Indeed,
let $x\in B'$. Then \[M[x]\models x\notin A.\] We claim that $x\notin A$. The argument is as in \cite{my_Borel_canonization} theorem 3.9. 
Assume otherwise -- $x\in A$ -- so there is some $\alpha<\omega_{1}$ such that
inner models in which $\alpha$ is countable think that $x\in A$.
For ease of notation, we let $N=M[x]$, and iterate $N$ uncountably
many times, so that $N_{\omega_{1}}$ will contain all countable ordinals.
In $N_{\omega_{1}}[coll(\omega,\alpha)],$ $x\in A,$ and using Shoenfield's
absoluteness, \[N_{\omega_{1}}\models x\in A\] as well. But $N_{\omega_{1}}$
is an elementary extension of $N=M[x]$ , in contradiction with
$M[x]\models x\notin A.$
\end{itemize}
\end{proof}

\begin{prop}
\label{omega1_inaccessible_implies_sigma_1_2_measurable}
Let $I$ be a $\mathbf{\Sigma^1_2}$ provably ccc $\sigma$-ideal. If $\omega_1$ is inaccessible to the reals, then $\mathbf{\Sigma^1_2}$ sets are $I$-measurable.
\end{prop}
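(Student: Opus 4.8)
The plan is to run a Solovay-style argument, using $\omega_1$-inaccessibility to the reals in place of the iterable inner models that drove the proof of Proposition~\ref{measurable_implies_sigma_1_2_measurable}. Fix a $\mathbf{\Sigma^1_2}$ set $A$, defined by a $\Sigma^1_2$ formula with real parameter $r$, and a condition $B\in\mathbb{P}_I$. Choose a real $z$ coding both $r$ and a Borel code for $B$, so that $r,B\in L[z]$. Since $I$ is $\mathbf{\Sigma^1_2}$, the statement ``$B$ is $I$-positive'' is $\Pi^1_2(z)$, so by Shoenfield absoluteness between the inner model $L[z]$ and $V$ it holds in $L[z]$ as well; thus $B\in\mathbb{P}_I^{L[z]}$. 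Working inside $L[z]$, I pass to a condition $B^*\leq B$ deciding ``$x_{gen}\in A$''; without loss of generality $B^*\Vdash^{L[z]}x_{gen}\in A$ (the other case is dual and yields ${}^{\sim}A$). Applying Shoenfield absoluteness once more to the $\Pi^1_2$ statement ``$B^*$ is $I$-positive'', $B^*$ is a genuine element of $\mathbb{P}_I$ in $V$.

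Next I would show that the set $G^*=\{x\in B^*:x\text{ is }\mathbb{P}_I^{L[z]}\text{-generic over }L[z]\}$ is Borel and co-$I$ in $B^*$. Because $\omega_1$ is inaccessible to the reals we have $\omega_1^{L[z]}<\omega_1$, so $L[z]$ contains only countably many reals; and since $I$ is provably ccc, $\mathbb{P}_I^{L[z]}$ is ccc in $L[z]$, so every maximal antichain of $\mathbb{P}_I^{L[z]}$ is countable and coded by a single real of $L[z]$. Hence $L[z]$ has only countably many maximal antichains $\{A^{(k)}_n\}_n$, $k<\omega$, below $B^*$. For each of them, maximality gives $L[z]\models B^*\setminus\bigcup_n A^{(k)}_n\in I$, and upward absoluteness of the $\Sigma^1_2$ predicate ``$\cdot\in I$'' (again using that $I$ is $\mathbf{\Sigma^1_2}$) keeps this set $I$-small in $V$. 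A point of $B^*$ is generic over $L[z]$ exactly when it meets each such antichain, so $B^*\setminus G^*$ is the union over $k<\omega$ of these $I$-small sets; as $I$ is a $\sigma$-ideal, $B^*\setminus G^*\in I$. Therefore $G^*$ is Borel, and since $B^*$ is $I$-positive in $V$, $G^*$ is $I$-positive.

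Finally I would transfer the forced decision to actual membership. For $x\in G^*$ the filter $\{C\in\mathbb{P}_I^{L[z]}:x\in C\}$ is $L[z]$-generic and contains $B^*$, and the corresponding generic extension is $L[z][x]=L[z,x]$. By the forcing theorem applied in $L[z]$ and the choice $B^*\Vdash^{L[z]}x_{gen}\in A$, we get $L[z,x]\models x\in A$; since $L[z,x]$ is an inner model containing $r$ and $x$, Shoenfield absoluteness gives $x\in A$ in $V$. Thus $G^*\subseteq A$ (respectively $G^*\subseteq{}^{\sim}A$ in the dual case), and $B'=G^*$ is the required Borel $I$-positive subset of $B$, establishing $I$-measurability of $A$.

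The main obstacle is the co-$I$ computation of the second step: it is where all three hypotheses are used together, and where one must reconcile $I$-smallness as computed in $L[z]$ with $I$-smallness in $V$. Provable ccc guarantees that $L[z]$ sees only countable maximal antichains, $\omega_1$-inaccessibility to the reals guarantees there are genuinely only countably many of them because $L[z]$ has only countably many reals, and the $\mathbf{\Sigma^1_2}$-definability of $I$ supplies the one-directional absoluteness that keeps the witnessing $I$-small sets small in $V$. The two-way Shoenfield absoluteness needed to know $B^*\in\mathbb{P}_I$ in $V$ is a secondary point that likewise rests on $I$ being $\mathbf{\Sigma^1_2}$. By contrast, the decision transfer in the last step is routine once these absoluteness facts are in place, and it replaces the inner-model iteration of Proposition~\ref{measurable_implies_sigma_1_2_measurable} entirely.
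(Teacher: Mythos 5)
Your proof is correct, and it takes a genuinely different route from the paper's. The paper decides ``$x_{gen}\in A$'' inside the forcing $\mathbb{P}_I$ of $\mathbb{V}$ and then argues with a countable elementary submodel $M\preceq H_{\theta}$: the positive case uses only upward $\Sigma^1_2$ absoluteness from $M[x]$ (exactly as in proposition \ref{measurable_implies_sigma_1_2_measurable}, with no definability assumption on $I$), while the negative case chooses $M$ containing $L_{\omega_1^L}$ and pushes ``$x\in{}^{\sim}A$'' along the chain $M[x]\rightarrow L_{\omega_1^L}[x]\rightarrow L[x]\rightarrow\mathbb{V}$, via analytic absoluteness, preservation of $\omega_1^L$ by the ccc forcing over $L$, and finally Shoenfield. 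You instead run the classical Solovay--Judah--Shelah style argument wholly over $L[z]$: decide the statement by a condition $B^*$ of $\mathbb{P}_I^{L[z]}$ (paying an extra application of Shoenfield to move $I$-positivity of $B$ and of $B^*$ between $L[z]$ and $\mathbb{V}$), show that the set $G^*$ of $\mathbb{P}_I^{L[z]}$-generics in $B^*$ is Borel and co-$I$ in $B^*$, and transfer the forced decision to each $x\in G^*$ by the forcing theorem applied in $L[z]$ followed by Shoenfield between $\mathbb{V}$ and the inner model $L[z,x]$. Your version buys symmetry: both cases follow from a single argument, because $L[z,x]$ is a proper-class inner model where Shoenfield absoluteness goes in both directions, whereas $\Pi^1_2$ truth does not pass upward from the countable model $M[x]$ -- which is precisely what forces the paper's detour through $L_{\omega_1^L}[x]$. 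It also establishes the sharper quantitative fact that the generics over $L[z]$ are co-$I$ below $B^*$, essentially the transcendence condition of theorem \ref{sigma_1_2_and_L_generics}, rather than mere $I$-positivity of the set of $M$-generics. What the paper's version buys in exchange is that its positive case works for arbitrary proper $\sigma$-ideals, so the proof isolates the negative case as the only place where $\Sigma^1_2$-definability, provable ccc-ness and inaccessibility to the reals are actually needed.

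Two steps you assert without proof are standard but deserve a word, since they carry real content: first, that meeting every maximal antichain of $L[z]$ below $B^*$ already makes $G_x=\{C\in\mathbb{P}_I^{L[z]}:x\in C\}$ a generic \emph{filter} -- filterhood is not automatic and needs the equivalent avoidance characterization, namely that such an $x$ lies in no Borel set coded in $L[z]$ and $I$-small there, which is what guarantees $C_1\cap C_2\notin I^{L[z]}$ for $C_1,C_2\in G_x$; and second, that $L[z][G_x]=L[z,x]$, i.e.\ that the generic real generates the extension. Both are routine for definable ccc ideals, and the paper leans on the same facts just as implicitly (``since $x$ is generic over $L$ (using the assumptions on $I$)''), so this is a matter of exposition rather than a gap.
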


\begin{proof}
Let $A$ be $\Sigma^1_2$, and $B\in\mathbb{P}_I$. Extend $B$ to $B'$ forcing $x_{gen}\in A$ or $x_{gen}\notin A$. The first case is exactly the same as the first case in proposition \ref{measurable_implies_sigma_1_2_measurable}. For the second case, let $M \preceq H_{\theta}$ be a countable elementary submodel containing all the relevant information and $L_{\omega_1^L}$, and in particular containing all constructible reals. It will be enough to show that the $M$-generics in $B'$ are elements of $^{\sim}A$. Indeed, if \[M[x] \models x_{gen}\in ^{\sim}A\] then \[L_{\omega_1^L}[x]\models x_{gen}\in ^{\sim}A\] by analytic absoluteness only. Since $x$ is generic over $L$ (using the assumptions on $I$), $\omega_1^L=\omega_1^{L[x]}$ and we can use Shoenfield's absoluteness to reflect the last statement to $\mathbb{V}$ and complete the proof.
\end{proof}

\begin{rem}
In fact, a sufficient assumption on the $\sigma$-ideal $I$ is
that for every $z$, $I\cap L[z]\in L[z]$ , and $L[z]\models I\cap L[z]\ is\ ccc.$
\end{rem}

\begin{prop}
\label{condition_forces_sigma_1_2}
Let $A$ be $\mathbf{\Sigma_{2}^{1}}$ and $I$ a proper $\sigma$-ideal. If some $B\in\mathbb{P}_{I}$ forces $x_{gen}\in A$ then
$A$ contains a Borel $I$-positive subset.
\end{prop}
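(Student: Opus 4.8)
The plan is to reproduce the upward half of the argument in Proposition \ref{measurable_implies_sigma_1_2_measurable}, extracting the witnessing Borel $I$-positive subset directly from the set of generic reals over a countable elementary submodel. Recall that in that proposition the measurable cardinal was only invoked to handle the downward direction $B\Vdash x_{gen}\notin A$; here we are \emph{forcing} $x_{gen}\in A$, so we only need to push a $\mathbf{\Sigma^1_2}$ truth \emph{up} from a transitive model to $V$, which is exactly what plain Shoenfield absoluteness provides without any extra hypothesis.

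First I would fix a countable elementary submodel $M\preceq H_\theta$ containing $I$, the condition $B$, and $A$ (that is, the $\mathbf{\Sigma^1_2}$ formula and the real parameter defining it). Set
\[
B'=\{x\in B\ :\ x\ \text{is}\ M\text{-generic}\}.
\]
Because $M$ is countable, $B'$ is the intersection over the countably many dense sets $D\in M$ of the sets $\bigcup(D\cap M)$, and each of these is a countable union of Borel conditions; hence $B'$ is Borel. The essential input is properness: since $I$ is a proper $\sigma$-ideal, $\mathbb{P}_I$ is proper, and the standard characterization of properness of $\mathbb{P}_I$ says precisely that the set of $M$-generic reals inside any condition is $I$-positive. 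Thus $B'\in\mathbb{P}_I$.

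It then remains to show $B'\subseteq A$, which exhibits $B'$ as the desired Borel $I$-positive subset of $A$. Fix $x\in B'$ and let $\pi:M\to\bar M$ be the transitive collapse. Since $x$ is $M$-generic and $x\in B$, the filter it determines is $\pi(\mathbb{P}_I)$-generic over $\bar M$, contains $\pi(B)$, and the generic real it produces is $x$ itself. By elementarity $M$ satisfies ``$B$ forces $x_{gen}\in A$'', so after collapsing $\bar M$ satisfies ``$\pi(B)$ forces $x_{gen}\in A$'' (the real parameter of $A$ lies in $M$ and is therefore fixed by $\pi$); the forcing theorem then yields $\bar M[x]\models x\in A$. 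Now $\bar M[x]$ is a countable transitive model containing $x$ and the parameter of $A$, and $x\in A$ is a $\mathbf{\Sigma^1_2}$ assertion, so by the upward direction of Shoenfield absoluteness it reflects to $V$, giving $x\in A$.

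I expect the main obstacle to be the properness step: justifying that the set of $M$-generic reals in $B$ is $I$-positive (as well as Borel). This is exactly where the hypothesis that $I$ is proper enters, and it rests on the standard equivalence between properness of $\mathbb{P}_I$ and $I$-positivity of the set of $M$-generic points. By contrast, the closing absoluteness step is routine, precisely because a $\mathbf{\Sigma^1_2}$ statement true in a transitive model remains true in $V$ with no further set-theoretic assumptions — which is why, unlike in Proposition \ref{measurable_implies_sigma_1_2_measurable}, no large cardinal is needed here.
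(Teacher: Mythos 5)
Your proof is correct, but it follows a genuinely different route from the paper's own proof of this proposition. The paper argues via the classical decomposition of $\mathbf{\Sigma^1_2}$ sets: write $A=\bigcup_{\alpha<\omega_1}B_\alpha$ as a union of $\aleph_1$ Borel sets, use the fact that a proper forcing preserves $\omega_1$ to find $C'\subseteq B$ and a single $\alpha<\omega_1$ with $C'\Vdash x_{gen}\in B_\alpha$, and then note that this $B_\alpha$ must be $I$-positive, since the generic real avoids every $I$-small Borel set coded in the ground model; as $B_\alpha\subseteq A$, this is the desired subset. You instead extract the positive set from the elementary-submodel side: Zapletal's characterization of properness (the $M$-generic points inside a condition $B\in M$ form a Borel $I$-positive set), the forcing theorem over the transitive collapse, and upward $\mathbf{\Sigma^1_2}$ absoluteness from the countable transitive model $\bar M[x]$ to $V$ (this last step is really Mostowski absoluteness applied to the $\Pi^1_1$ matrix, though your attribution to the upward half of Shoenfield is harmless and the content is right). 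Your route is exactly the first case of the paper's Proposition \ref{measurable_implies_sigma_1_2_measurable}, which the paper itself observes (in the proof of Proposition \ref{omega1_inaccessible_implies_sigma_1_2_measurable}) requires no large-cardinal hypothesis, so you are reusing machinery already present in the paper; it also has the small bonus of producing the Borel $I$-positive subset of $A$ inside $B$ itself. The paper's argument, by contrast, is shorter and uses strictly less than properness---only that $\mathbb{P}_I$ preserves $\omega_1$, together with the density fact that the generic avoids coded $I$-small sets---so it would apply verbatim to any $\omega_1$-preserving $\sigma$-ideal, whereas your appeal to the $I$-positivity of the set of $M$-generics uses properness in full.
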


\begin{proof}
$A$ can be represented as a union of $\aleph_1$ Borel sets: \[A=\bigcup_{\alpha<\omega_{1}}B_{\alpha}.\] Let $C\Vdash x_{gen}\in A$.
Then there is $C'\subseteq C$ such that $C'\Vdash x_{gen}\in B_{\alpha}$
-- where we have used the assumption that $\omega_1$ is preserved. $B_{\alpha}$ then must be $I$-positive.
\end{proof}

The rest of this section is concerned only with ccc $\sigma$-ideals. Both of the following are false for
general proper $\sigma$-ideals -- consider the countable ideal and the $\Pi_{1}^{1}$
set with no perfect subset.

\begin{prop}
Let $A$ be $\mathbf{\Pi_{2}^{1}}$ and $I$ a ccc $\sigma$-
ideal. If $A$ is $I$-positive then there is some $B\in\mathbb{P}_{I}$
forcing $x_{gen}\in A$.
\end{prop}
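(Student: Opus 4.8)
The plan is to argue by contradiction, reducing the $\Pi^1_2$ statement to the Borel forcing relation by combining the $\aleph_1$-Borel representation of the complementary $\Sigma^1_2$ set (the same representation used in Proposition \ref{condition_forces_sigma_1_2}) with the ccc bounding of names for countable ordinals. So suppose toward a contradiction that no $B\in\mathbb{P}_I$ forces $x_{gen}\in A$. Since $A$ is $\Pi^1_2$, its complement ${}^{\sim}A$ is $\Sigma^1_2$, and the hypothesis says exactly that $D=\{B\in\mathbb{P}_I : B\Vdash x_{gen}\in{}^{\sim}A\}$ is dense: given any condition $C$, since $C$ does not force $x_{gen}\in A$ there is $C'\subseteq C$ with $C'\Vdash x_{gen}\notin A$, i.e. $C'\in D$. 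Using that $\mathbb{P}_I$ is ccc, fix a maximal antichain $\{B_n\}_{n<\omega}\subseteq D$. Then ${}^{\sim}\bigcup_n B_n\in I$: otherwise that complement would be an $I$-positive Borel condition disjoint from every $B_n$, and by density of $D$ it would have an extension in $D$ incompatible with all the $B_n$, contradicting maximality.

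The crux is the local claim that $A\cap B_n\in I$ for each $n$. I would prove it by localizing the bounding idea of Proposition \ref{condition_forces_sigma_1_2} to the single condition $B_n$. Write ${}^{\sim}A=\bigcup_{\alpha<\omega_1}C_\alpha$ with each $C_\alpha$ Borel and the sequence increasing (replacing $C_\alpha$ by $\bigcup_{\gamma\le\alpha}C_\gamma$ if necessary). This $\aleph_1$-Borel decomposition is absolute, so in the $\mathbb{P}_I$-extension ${}^{\sim}A$ is still the union of the reinterpreted $C_\alpha$, using that ccc preserves $\omega_1$. Hence $B_n\Vdash\exists\alpha<\omega_1\ x_{gen}\in C_\alpha$; let $\dot\alpha$ name the least such $\alpha$. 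Since $\mathbb{P}_I$ is ccc, a name for an element of $\omega_1$ is bounded by a ground model ordinal, so there is $\beta<\omega_1$ with $B_n\Vdash\dot\alpha<\beta$, and by monotonicity of the sequence $B_n\Vdash x_{gen}\in C_\beta$.

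Now I pass back to the ground model using the standard fact that for a Borel set $C$ the forcing relation is read off from the ideal, namely $B\Vdash x_{gen}\in C$ if and only if $B\setminus C\in I$. Applied to $C_\beta$ this gives $B_n\setminus C_\beta\in I$, and since $C_\beta\subseteq{}^{\sim}A$ (so $A\cap C_\beta=\emptyset$) we obtain $A\cap B_n\subseteq B_n\setminus C_\beta\in I$, proving the claim. Finally $A$ is contained in $\bigcup_n(A\cap B_n)\cup{}^{\sim}\bigcup_n B_n$, a countable union of $I$-small sets together with the $I$-small set ${}^{\sim}\bigcup_n B_n$, whence $A\in I$, contradicting that $A$ is $I$-positive.

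I expect the main obstacle to be the bounding step, i.e. justifying that ``$B_n\Vdash x_{gen}\in{}^{\sim}A$'' can be collapsed to forced membership in a single ground model Borel piece $C_\beta$. This rests on two ingredients that must be handled carefully: the absoluteness of the $\aleph_1$-Borel representation of the $\Sigma^1_2$ set between $V$ and its ccc extension (which is precisely where $\omega_1$-preservation is used), and the ccc boundedness of the name $\dot\alpha$ below $\omega_1$. Both ingredients genuinely fail for non-ccc ideals, which is consistent with the remark that the statement is false for the countable ideal, where a thin $\Pi^1_1$ set is $I$-positive yet no perfect condition can force the generic into it.
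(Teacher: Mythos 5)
Your proof is correct and is essentially the paper's own argument: both decompose the $\Sigma^1_2$ complement of $A$ into $\aleph_1$ Borel sets, use the ccc to extract a countable maximal antichain each of whose elements forces $x_{gen}$ into (equivalently, is contained modulo $I$ in) a single Borel piece of that decomposition, and conclude that $A$ itself is $I$-small, contradicting positivity. The only difference is packaging: you collapse each condition into one Borel piece via ccc bounding of the name for the least ordinal index, while the paper reaches the same antichain by transfinitely extending antichains along the decomposition, a process that must stabilize at a countable stage by the ccc.
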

\begin{proof}

Assume \[\mathbb{P}_I \Vdash x_{gen}\notin A.\]As before, $^{\sim} A=\cup_{\alpha<\omega_{1}}B_{\alpha}$. Find a maximal antichain
forcing $x_{gen}\in B_{0}$, extend it to a maximal antichain forcing
$x_{gen}\in B_{0}\cup B_{1},$ and so on. Since antichains are countable,
the process must stop at a countable level. The union of all conditions in that antichain is a Borel
set $B$ contained modulo $I$ in $^{\sim} A$ such that \[\mathbb{P}_{I} \Vdash x_{gen}\in B.\]
The complement of $B$ must then be $I$-small. $A$ is contained in $^{\sim}B$ modulo $I$, therefore it is $I$-small as well -- which is what we wanted to show. 
\end{proof}

\begin{cor}
Let $A$ be $\mathbf{\Delta_{2}^{1}}$ and $I$ a ccc $\sigma$-ideal.
If $A$ is $I$-positive then $A$ contains a Borel $I$-positive
subset. In particular, a coanalytic set with no perfect subset is
$I$-small with respect to any ccc $\sigma$-ideal.
\end{cor}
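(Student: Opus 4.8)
The plan is to derive the first assertion by chaining the two immediately preceding propositions, exploiting the fact that a $\mathbf{\Delta^1_2}$ set is at once $\mathbf{\Sigma^1_2}$ and $\mathbf{\Pi^1_2}$. So suppose $A$ is $\mathbf{\Delta^1_2}$ and $I$-positive. First I would use that $A$ is $\mathbf{\Pi^1_2}$: by the preceding proposition (which needs only that $I$ be ccc), an $I$-positive $\mathbf{\Pi^1_2}$ set admits some $B\in\mathbb{P}_I$ forcing $x_{gen}\in A$. Then I would switch to the $\mathbf{\Sigma^1_2}$ description of the very same set $A$: since some condition forces $x_{gen}\in A$, Proposition~\ref{condition_forces_sigma_1_2} (valid for any proper $\sigma$-ideal, and ccc ideals are proper) gives that $A$ contains a Borel $I$-positive subset. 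That is exactly the desired conclusion, so the only bookkeeping is to note that it is the same $A$ entering both invocations and that each proposition's hypotheses are met.

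For the ``in particular'' clause I would argue by contradiction, after recalling $\mathbf{\Pi^1_1}\subseteq\mathbf{\Delta^1_2}$, so that a coanalytic set falls under the first part. Suppose $A$ is coanalytic, has no perfect subset, and yet is $I$-positive. By the first part $A$ contains a Borel $I$-positive subset $B'$. Since $I$ is a $\sigma$-ideal on the reals containing the countable sets, $B'$ is uncountable, and by the classical perfect set property for Borel sets $B'$ contains a perfect set $P$. But $P\subseteq B'\subseteq A$ contradicts the hypothesis that $A$ has no perfect subset; hence $A$ must be $I$-small.

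I expect no genuine obstacle: the first assertion is a two-line composition of results already in hand, and the second is a routine application of the Borel perfect set property. The one point worth a word is the step ``$I$-positive Borel $\Rightarrow$ uncountable,'' which rests on countable sets being $I$-small. It is worth emphasizing, however, that the real use of the ccc hypothesis lies upstream, inside the preceding $\mathbf{\Pi^1_2}$ proposition, whose antichain-exhaustion argument collapses for non-ccc ideals. This is precisely why the countable ideal together with a thin $\mathbf{\Pi^1_1}$ set forms a counterexample in the general proper case, matching the remark already made in the text.
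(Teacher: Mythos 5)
Your chaining is exactly the paper's own proof (which reads, in its entirety, ``Follows of the last two propositions''), and your treatment of the ``in particular'' clause is the intended one; but precisely the step you wave off as bookkeeping --- ``it is the same $A$ entering both invocations'' --- is a genuine gap, in the paper as much as in your write-up. ``Some $B\in\mathbb{P}_I$ forces $x_{gen}\in A$'' is not a property of the set $A$ alone: $x_{gen}$ is a new real, so the statement depends on which definition of $A$ is used to evaluate its membership. The $\mathbf{\Pi^1_2}$ proposition produces a condition forcing $\psi(x_{gen})$, where $\psi$ is the $\Pi^1_2$ definition (its proof decomposes ${}^{\sim}A$ via the Shoenfield tree of the $\Sigma^1_2$ definition of the complement), while Proposition~\ref{condition_forces_sigma_1_2} needs a condition forcing $\phi(x_{gen})$, where $\phi$ is the $\Sigma^1_2$ definition (its proof decomposes $A$ itself via the Shoenfield tree of $\phi$). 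In $V$ the two definitions agree, but the implication $\forall x\,(\psi(x)\rightarrow\phi(x))$ is $\Pi^1_3$ and need not hold of new reals in the extension; only the converse implication is $\Pi^1_2$ and hence Shoenfield-absolute. So the output of the first proposition does not match the input of the second.

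Moreover the gap cannot be patched, because the first assertion is false in $L$ for the meager ideal (which is ccc): in $L$ there is a $\mathbf{\Delta^1_2}$ Bernstein set, which is non-meager (otherwise its complement would contain a comeager, hence perfect, set) yet contains no non-meager Borel subset (any such subset contains a perfect set). One can also see the failure abstractly from the paper itself: applying the corollary to $B\cap A$ and $B-A$ below an arbitrary condition $B$ shows that it implies $I$-measurability of all $\mathbf{\Delta^1_2}$ sets for every ccc $\sigma$-ideal, which by Theorem~\ref{prop_measurability_vs_transcendence_for_ccc} is equivalent to the existence of $\mathbb{P}_I$-generics over $L[z]$ and so is not a theorem of ZFC. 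What does survive --- and what your last paragraph essentially proves --- is the ``in particular'' clause: for a coanalytic set the same $\Pi^1_1$ formula can be fed to both propositions (syntactically it is both $\Pi^1_2$ and $\Sigma^1_2$, so forcing it means the same thing in both), the mismatch disappears, and then your argument that a Borel $I$-positive set is uncountable (granting the implicit convention that countable sets are $I$-small, which you rightly flag) and hence contains a perfect set gives the contradiction. The second assertion should therefore be proved by running the two propositions directly on the $\Pi^1_1$ definition, not by quoting the first assertion for $\mathbf{\Delta^1_2}$ sets.
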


\begin{proof}
Follows of the last two propositions.
\end{proof}

\section{\label{sec:Generic--Absoluteness}Measurability, Generic 
Absoluteness and Transcendence over $L$}

The main result of the following section establishes equivalences between three notions:
generic absoluteness, measurability -- as discussed in the previous section -- 
and the following notion due to Zapletal.

\begin{defn}
(\cite{zapletal_book} 2.3.4) For $\Gamma$ a pointclass, we say that $\Gamma$
has \emph{$\mathbb{P}_{I}$-Borel uniformization} if given $A\in\Gamma$
a subset of $(\omega^{\omega})^{2}$ with nonempty sections and $B\subseteq\omega^{\omega}$
$I$-positive, there is $B'\subseteq B$ Borel $I$-positive and a Borel
function $f\subseteq A$ with domain $B'$. 
\end{defn}

\begin{thm}
\label{TFAE}
Let $I$ be a proper $\sigma$-ideal. The following are equivalent:
\begin{enumerate}
\item $\Sigma_{3}^{1}$-$\mathbb{P}_{I}$-generic-absoluteness.
\item $\mathbf{\Sigma_{2}^{1}}$ has $\mathbb{P}_{I}$-Borel uniformization.
\item $\mathbf{\Pi_{1}^{1}}$ has $\mathbb{P}_{I}$-Borel uniformization.
\item $\mathbf{\Delta_{2}^{1}}$ sets are $I$-measurable.
\end{enumerate}
\end{thm}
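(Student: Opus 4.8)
The plan is to prove the four statements equivalent by establishing the cyclic chain $(1)\Rightarrow(2)\Rightarrow(3)\Rightarrow(4)$ together with the two ``return'' arrows $(4)\Rightarrow(2)$ and $(3)\Rightarrow(1)$; a quick check shows the resulting implication graph is strongly connected. Throughout I would lean on three tools available for any proper $I$: Zapletal's \emph{Borel reading of names} (for a $\mathbb{P}_I$-name $\dot y$ for a real and a condition $B$ there are $B'\subseteq B$ Borel $I$-positive and a Borel $h$ with $B'\Vdash\dot y=h(x_{gen})$, proved by collapsing a countable $M\preceq H_\theta$ and using that properness makes the $M$-generics in $B$ an $I$-positive Borel set); Shoenfield absoluteness between $V$ and a $\mathbb{P}_I$-extension $V[G]$ (legitimate since properness preserves $\omega_1$); and Proposition \ref{condition_forces_sigma_1_2}. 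The overarching subtlety -- and the reason the proof is not elementary -- is that $\mathbf{\Delta^1_2}$-ness and $\Pi^1_3$ truth are \emph{not} absolute to generic extensions, so every transfer between models must be carefully justified.

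For $(1)\Rightarrow(2)$, given $A\in\mathbf{\Sigma^1_2}$ with nonempty sections and $B$ $I$-positive, the key observation is that ``$A$ has nonempty sections'', i.e. $\forall x\,\exists y\,(x,y)\in A$, is a $\Pi^1_3$ statement, so it is exactly $\Sigma^1_3$-$\mathbb{P}_I$-generic-absoluteness that lets me transport it to $V[G]$ and conclude $A_{x_{gen}}\neq\emptyset$ there. I would then pick a name $\dot y$ with $B_1\Vdash(x_{gen},\dot y)\in A$, apply Borel reading to get $B_2\subseteq B_1$ and Borel $h$ with $B_2\Vdash(x_{gen},h(x_{gen}))\in A$, and finish by applying Proposition \ref{condition_forces_sigma_1_2} to the $\mathbf{\Sigma^1_2}$ set $E=\{x\in B_2:(x,h(x))\in A\}$, into which $x_{gen}$ is forced: since $E\subseteq B_2$, the $I$-positive Borel subset it yields is an $B_3\subseteq B_2$ on which $(\mathrm{id},h)$ uniformizes $A$. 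The implication $(2)\Rightarrow(3)$ is immediate from $\mathbf{\Pi^1_1}\subseteq\mathbf{\Sigma^1_2}$ (its converse, which I would also record, is a matter of absorbing the existential quantifier of a $\mathbf{\Sigma^1_2}$ set into an extra coordinate of a $\mathbf{\Pi^1_1}$ set with the same domain).

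For $(3)\Rightarrow(4)$, given $A\in\mathbf{\Delta^1_2}$ I write $A=\{x:\exists y\,\sigma(x,y)\}$ and $^{\sim}A=\{x:\exists z\,\tau(x,z)\}$ with $\sigma,\tau\in\mathbf{\Pi^1_1}$, form the $\mathbf{\Pi^1_1}$ set $D$ coding the disjunction (which has nonempty sections, since every $x$ lies in $A$ or in $^{\sim}A$), uniformize it by (3) on an $I$-positive Borel $B'\subseteq B$, and observe that the Borel set recording which side was chosen splits $B'$ into two Borel pieces, one of which is $I$-positive and lands inside $A$ or inside $^{\sim}A$. The return arrow $(4)\Rightarrow(2)$ is where measurability does real work: starting from $A\in\mathbf{\Sigma^1_2}$ with nonempty sections I invoke the classical Kond\^o--Addison $\mathbf{\Sigma^1_2}$-uniformization to get a \emph{total} function $g$ with $\mathbf{\Sigma^1_2}$ graph and $g(x)\in A_x$; because $g$ is total and single-valued, each level set $\{x:g(x)(n)=k\}$ is $\mathbf{\Sigma^1_2}$ with $\mathbf{\Sigma^1_2}$ complement, hence $\mathbf{\Delta^1_2}$, hence $I$-measurable by (4). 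Fixing $M\preceq H_\theta$ containing a code for $g$ and, for each level set, a maximal antichain witnessing its measurability, the $I$-positive Borel set $C$ of $M$-generics in $B$ satisfies, for $x\in C$, that ``$g(x)(n)=k$'' holds iff $x$ lies in the condition of the $(n,k)$-antichain that it meets -- a pure $V$ set-containment -- so $g\restriction C$ is Borel and $(\mathrm{id},g)$ uniformizes $A$ on $C$. Note this argument stays entirely inside $V$ and never transfers $\mathbf{\Delta^1_2}$-ness to $V[G]$, which is essential since that transfer is precisely what can fail.

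Finally, $(3)\Rightarrow(1)$ produces the generic absoluteness, and this is the step I expect to be the main obstacle, since it must manufacture reflection out of a mere uniformization property. The upward direction of $\Sigma^1_3$-absoluteness is free from Shoenfield. For the downward direction, suppose $B''\Vdash\psi(h(x_{gen}),r)$ with $\psi\in\Pi^1_2$ and assume toward a contradiction that no ground-model witness exists, i.e. $V\models\forall w\,\exists y\,\eta(w,y,r)$ with $\eta\in\mathbf{\Pi^1_1}$; I would apply $\mathbf{\Pi^1_1}$-uniformization to $\{(x,y):x\in B''\wedge\eta(h(x),y,r)\}$ to obtain a Borel $k$ on an $I$-positive Borel $B_0\subseteq B''$ with $\eta(h(x),k(x),r)$ for all $x\in B_0$, observe that this last fact is $\Pi^1_2$ (it carries the explicit Borel witness $k$) and so transfers to $V[G]$ by Shoenfield, and read off that $B_0\Vdash\neg\psi(h(x_{gen}),r)$, contradicting $B_0\le B''$. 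The delicate point -- and the reason only $\mathbf{\Delta^1_2}$ (not $\mathbf{\Sigma^1_2}$ or $\mathbf{\Pi^1_2}$) measurability is available here, and why the $\Pi^1_3$ ``nonempty sections'' clause cannot simply be pushed into $V[G]$ by Shoenfield -- is exactly that such transfers are illegitimate in general: each one must be routed either through genuine $\Sigma^1_3$-absoluteness $(1)$, through a $\Pi^1_2$ statement carrying an explicit Borel witness so that Shoenfield applies between $V$ and $V[G]$, or through pure set-containment inside $V$ as in $(4)\Rightarrow(2)$.
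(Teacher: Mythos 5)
Your proof is correct, and it closes the equivalence by a genuinely different route than the paper. Your $(1)\Rightarrow(2)$, $(3)\Rightarrow(4)$ and $(3)\Rightarrow(1)$ are essentially the paper's own arguments (for $(1)\Rightarrow(2)$ the paper finishes with a countable-submodel plus $\mathbf{\Pi^1_1}$-absoluteness step, where you instead invoke Proposition \ref{condition_forces_sigma_1_2} applied to $E=\{x\in B_2:(x,h(x))\in A\}$; the two are interchangeable). The genuine divergence is the return arrow out of $(4)$: the paper proves $(4)\Rightarrow(1)$ by showing that $I$-measurability of $\mathbf{\Delta^1_2}$ sets makes them $\mathbb{P}_I$-Baire and then quoting the Stone-space machinery of \cite{Ikegami_main} (3.9) and \cite{UB_sets}, relegating the uniformization-based proof of $(3)\Rightarrow(1)$ to an ``interesting alternative''; you instead prove $(4)\Rightarrow(2)$ directly --- Kond\^o--Addison uniformization yields a total function $g$ with $\mathbf{\Sigma^1_2}$ graph, totality and single-valuedness make each level set $\{x:g(x)(n)=k\}$ a $\mathbf{\Delta^1_2}$ set, hence $I$-measurable, and on the $I$-positive Borel set of $M$-generics the countably many antichains in $M$ deciding these level sets render $g\restriction C$ Borel --- after which the paper's ``alternative'' $(3)\Rightarrow(1)$ becomes an essential link in your cycle rather than a curiosity. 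What each approach buys: the paper's route is shorter on the page and situates the theorem inside Ikegami's general framework; yours is self-contained (no $\mathbb{P}_I$-Baireness, no Stone space, no external citations beyond classical uniformization and the properness fact about $M$-generic reals), and your $(4)\Rightarrow(2)$ argument has independent interest because it extracts Borel uniformization from measurability entirely inside $\mathbb{V}$, never passing through generic absoluteness. One pedantic point, which your write-up shares with the paper's own $(3)\Rightarrow(1)$: the set $\{(x,y):x\in B''\wedge\eta(h(x),y,r)\}$ that you uniformize has empty sections off $B''$, while Zapletal's definition demands nonempty sections everywhere; pad it by $({}^{\sim}B'')\times\omega^{\omega}$ (still $\mathbf{\Pi^1_1}$) or extend $h$ to a total Borel function. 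Likewise, $\forall x\in B_0\;\eta(h(x),k(x),r)$ is in fact $\mathbf{\Pi^1_1}$, not merely $\Pi^1_2$; Shoenfield applies either way.
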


\begin{proof}
$(1)\Rightarrow(2)$: Let $A$ be $\Sigma_{2}^{1}$ with nonempty
sections -- $\forall x\exists y\ (x,y)\in A$ -- which is $\Sigma_{3}^{1}$
and hence by assumption is preserved in $\mathbb{P}_{I}$ generic
extensions. Then there is a name $\tau$ such that \[B\Vdash(x,\tau)\in A.\]Let $f:C\to Y$ be a Borel function in the ground model such that
$C\Vdash(x,f(x))\in A$. If $M\preceq H_\theta$ is a countable elementary submodel containing all the relevant information and $x$ is $M$ generic, then $M[x]\models(x,f(x))\in A$ and using
$\Pi_{1}^{1}$ absoluteness, $(x,f(x))\in A$.

$(3)\Rightarrow(4):$ Let $B\in\mathbb{P}_{I}$ and $A$ a $\Delta_{2}^{1}$
set. Fix $C$ and $D$ $\Pi_{1}^{1}$ subsets of the plane such that
$\Pi(C)$, the projection of $C$, is $A$, and $\Pi(D)=^{\sim}A$. Since $C\cup D$ is a $\Pi_{1}^{1}$
set with nonempty sections, there is $B'\subseteq B$ in $\mathbb{P}_{I}$
and $f$ a Borel function such that
\[
\forall x\in B':\ (x,f(x))\in C\cup D.
\]
It follows that for $x \in B'$: \[x\in A \Leftrightarrow (x,f(x))\in C \Leftrightarrow (x,f(x))\notin D,\] so $B' \cap A$ is Borel. The same argument works for $B' \cap ^{\sim}A$. One of $B' \cap A, B'\cap ^{\sim}A$ must be $I$-positive.
$(4)\Rightarrow(1)$: We use the notation of \cite{Ikegami_main} and follow the proof of \cite{Ikegami_main} theorem 4.1 and claim 4.2.

Assume all $\mathbf{\Delta_{2}^{1}}$ sets are
$I$-measurable. We show that all $\mathbf{\Delta_{2}^{1}}$ sets
are $\mathbb{P}_{I}$-Baire, and that will be enough (See \cite{Ikegami_main}
3.9 and \cite{UB_sets}). 

Let $f:st(\mathbb{P}_{I})\to\omega^{\omega}$ be a Baire measurable
function and $A$ a $\mathbf{\Delta_{2}^{1}}$ set. It will be enough
to show that
\[
\{B\ :\ O_{B}\cap f^{-1}(A)\ meager\ or\ O_{B}-f^{-1}(A)\ meager\}
\]
is a dense set in $\mathbb{P}_{I}$ , where $O_{B}$ is $\{G\in st(\mathbb{P}_{I})\ :\ B\in G\}$.
Indeed, let $B\in\mathbb{P}_{I}.$ There is a name $\tau$ such that
for comeagerly many $G\in st(\mathbb{P}_{I})$:
\[
f(G)=\tau[G].
\]
Since $I$ is proper, there is $B'\subseteq B$ in $\mathbb{P}_{I}$
and $g:B'\to\omega^{\omega}$ Borel such that
\[
B'\Vdash g(x_{G})=\tau[G],
\]
which means that for comeagerly many $G\in st(\mathbb{P}_{I})$ such
that $B'\in G$, \[g(x_{G})=\tau[G]=f(G).\] Since $g^{-1}(A)$ is $\mathbf{\Delta_{2}^{1}},$
it is measurable by our assumption. Let $B''\subseteq B'$ in $\mathbb{P}_{I}$
be such that
\[
B''\subseteq g^{-1}(A)
\]
or 
\[
B''\subseteq g^{-1}(^{\sim}A).
\]
We continue with the 1st case -- the 2nd is similar. Since $B''\in G$
implies $B'\in G$, we conclude that for comeagerly many $G\in st(\mathbb{P}_{I})$
such that $B''\in G$
\[
g(x_{G})=\tau[G]=f(G)\in A
\]
whereas $f(G)\in A$ because $x_{G}\in B''$ and $B''\subseteq g^{-1}(A).$ That shows that $O_{B''}-f^{-1}(A)$
is meager.\end{proof}
We give here another argument for $(3)\Rightarrow(1)$ which we find interesting on its own. It is based on an argument from the proof of  \cite{ikegami_sacks} theorem 3.1:

$(3)\Rightarrow(1):$ By way of contradiction, assume $\forall x\neg\Psi(x)$
but $\Vdash\exists x\Psi(x)$ ,where $\Psi(x)=\forall y\Phi(x,y)$
and $\Phi$ is $\Sigma_{1}^{1}.$ Fix $B\in\mathbb{P}_{I}$ and $f \in \mathbb{V}$
a Borel function such that
\[
B\Vdash\Psi(f(x_{gen})).
\]
In $\mathbb{V}$
\[
\forall x\exists y\neg\Phi(f(x),y)
\]
so we can use $\Pi_{1}^{1}$ $\mathbb{P}_{I}$-Borel uniformization
to produce a Borel function $g:B'\to\omega^{\omega}$, $B'\subseteq B$
in $\mathbb{P}_{I},$ such that 
\[
\forall x\in B':\ \neg\Phi(f(x),g(x)).
\]
Since the last statement is $\Pi_{1}^{1}$ , it is preserved in generic
extensions. In particular, $B'\Vdash\neg\Phi(f(x_{gen}),g(x_{gen}))$,
whereas $B\Vdash\Psi(f(x_{gen}))=\forall y\Phi(f(x_{gen}),y)$ -- a
contradiction.
\\

For $\mathbf{\Sigma^1_2}$ provably ccc $\sigma$-ideals, we can add transcendence over $L$ to the list of equivalent conditions. This is no more than adapting \cite{Ikegami_main} Theorem 4.3 to our context, with a slight change in statement and almost no change in the proof.

\begin{thm}
\label{prop_measurability_vs_transcendence_for_ccc}
Let $I$ be a $\mathbf{\Sigma^1_2}$ provably ccc $\sigma$-ideal. Then the following are equivalent:
\begin{enumerate}
\item $\mathbf{\Delta_{2}^{1}}$ sets are $I$-measurable.
\item For every $B \in \mathbb{P}_I$ and for every real $z$, there is a $\mathbb{P}_I$-generic over $L[z]$ in $B$.
\end{enumerate}
\end{thm}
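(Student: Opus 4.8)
The plan is to connect condition (2) to $\Sigma^1_3$-$\mathbb{P}_I$-generic-absoluteness, which Theorem \ref{TFAE} already identifies with $\mathbf{\Delta_2^1}$ $I$-measurability; this is exactly the route of \cite{Ikegami_main} Theorem 4.3. Throughout I would use the following characterization of genericity, valid because $I$ is provably ccc: a real $x$ is $\mathbb{P}_I$-generic over an inner model $M$ if and only if $x$ avoids every $I$-small Borel set coded in $M$. Indeed, for a maximal antichain $\{C_n\}\subseteq\mathbb{P}_I\cap M$ the complement of $\bigcup_n C_n$ is $I$-small, since any positive Borel set inside it would contradict maximality; conversely the complement of an $I$-small Borel set carries a maximal antichain. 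Taking $M=L[z]$ and using that $I$ is $\mathbf{\Sigma_2^1}$ -- so that ``$B_c\in I$'' is upward absolute from $L[z]$ to $V$ by Shoenfield and is computed correctly in $L[z]$ by provable ccc -- the set $N_z$ of reals that are \emph{not} $\mathbb{P}_I$-generic over $L[z]$ is $\mathbf{\Sigma_2^1}(z)$, and ``$x$ is generic over $L[z]$'' is $\mathbf{\Pi_2^1}(z)$.

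For $(1)\Rightarrow(2)$, by Theorem \ref{TFAE} it suffices to derive (2) from $\Sigma^1_3$-$\mathbb{P}_I$-generic-absoluteness. Fix $z$ and $B\in\mathbb{P}_I$, let $b$ be a Borel code for $B$, and force with $\mathbb{P}_I$ below $B$. In the extension $V[G]$ the generic real $x_{gen}$ lies in $B$ and is $\mathbb{P}_I$-generic over $V$, hence over $L[z]$, so it avoids every $I$-small Borel set coded in $L[z]$. Thus $V[G]$ satisfies ``there is a $\mathbb{P}_I$-generic over $L[z]$ in $B$'', a statement which by the previous paragraph is of the form $\exists x\,(x\in B_b\wedge\mathbf{\Pi_2^1}(z))$, i.e.\ $\Sigma^1_3(z,b)$ in the ground-model parameters $z,b$. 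By $\Sigma^1_3$-generic-absoluteness it holds in $V$, which is exactly (2).

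For $(2)\Rightarrow(1)$, again via Theorem \ref{TFAE} it is enough to prove $\Sigma^1_3$-$\mathbb{P}_I$-generic-absoluteness, and this is the substantive direction. Upward absoluteness of $\Sigma^1_3$ is free: a $\Sigma^1_3$ sentence has a $\mathbf{\Pi_2^1}$ matrix, absolute between $V$ and $V[G]$ by Shoenfield, so a ground-model witness remains one. For the downward direction suppose $V[G]\models\exists x\,\Psi(x,a)$ with $\Psi\in\mathbf{\Pi_2^1}$ and $a\in V$. Since $\mathbb{P}_I$ is proper, names for reals are read continuously, so the witness has the form $f(x_{gen})$ for a ground-model Borel $f$, and some $C\in\mathbb{P}_I$ forces $\Psi(f(x_{gen}),a)$. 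Let $z$ code $a$, $C$ and a code for $f$. The key point, following \cite{Ikegami_main} claim 4.2, is that for the $\mathbf{\Pi_2^1}$ formula $\Psi$ the forcing relation ``$C\Vdash\Psi(f(x_{gen}),a)$'' is itself expressible by a $\mathbf{\Pi_2^1}(z)$ formula; here the Shoenfield tree representation together with provable ccc and $\mathbf{\Sigma_2^1}$-definability of $I$ guarantee that $\mathbb{P}_I$ and its forcing relation are computed correctly inside $L[z]$. Being $\mathbf{\Pi_2^1}(z)$ and true in $V$, this statement reflects down to $L[z]$; then (2) supplies an actual $\mathbb{P}_I$-generic $x^*\in C$ over $L[z]$, so that $L[z][x^*]$ is a genuine generic extension lying inside $V$ with $L[z][x^*]\models\Psi(f(x^*),a)$.

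The hard part will be the final transfer. Since $\Psi$ is $\mathbf{\Pi_2^1}$ -- downward but not upward absolute -- passing from $L[z][x^*]\models\Psi(f(x^*),a)$ to $V\models\Psi(f(x^*),a)$ (so that $f(x^*)\in V$ genuinely witnesses $\exists x\,\Psi$) is not automatic, and this is precisely where the genericity of $x^*$ over $L[z]$ must be combined with the correct $L[z]$-computation of the $\mathbb{P}_I$-forcing relation, exactly as in the proof of \cite{Ikegami_main} Theorem 4.3. I expect essentially no change from Ikegami's argument beyond bookkeeping: the hypotheses ``$\mathbf{\Sigma_2^1}$'' and ``provably ccc'' enter solely to ensure that $L[z]$ sees the same forcing $\mathbb{P}_I$, with the same countable-antichain combinatorics and the same forcing relation, as $V$ does, and that $I$-smallness of Borel codes is absolute between $L[z]$ and $V$.
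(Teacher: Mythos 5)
Your direction $(1)\Rightarrow(2)$ is correct and is essentially the paper's own argument: Theorem \ref{TFAE} turns measurability into $\Sigma^1_3$-$\mathbb{P}_I$-generic-absoluteness, the set of $\mathbb{P}_I$-generics over $L[z]$ inside $B$ is $\mathbf{\Pi^1_2}(z)$ (using that $I$ is $\mathbf{\Sigma^1_2}$ and provably ccc), and forcing below $B$ makes the resulting $\Sigma^1_3$ statement true, so it pulls back to $V$. The genuine gap is in $(2)\Rightarrow(1)$. Your entire plan rests on the claim that ``$C\Vdash\Psi(f(x_{gen}),a)$'', for $\Psi\in\mathbf{\Pi^1_2}$, is expressible by a $\mathbf{\Pi^1_2}(z)$ formula that moreover defines the forcing relation correctly inside $L[z]$; this claim is the whole mathematical content of the direction, and you do not prove it. It is also not obviously available under the present hypotheses: to say that $C$ forces a $\mathbf{\Pi^1_2}$ statement one naturally quantifies over all Borel functions $g$ (names for the universal real quantifier of $\Psi$) and over all $I$-positive Borel $C'\subseteq C$, and since ``$I$-positive'' is itself $\mathbf{\Pi^1_2}$ when $I$ is $\mathbf{\Sigma^1_2}$, this computation lands at $\mathbf{\Pi^1_3}$, not $\mathbf{\Pi^1_2}$. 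Ikegami obtains such definability under stronger assumptions than this theorem carries (strongly arboreal, Borel generated, provably $\mathbf{\Delta^1_2}$), and note that $\mathbb{P}_I$ here is only $\mathbf{\Pi^1_2}$ as a set of codes, so it is not a Suslin forcing; ``essentially no change beyond bookkeeping'' is therefore not a safe expectation. Incidentally, you misplace the difficulty: the step you call the hard part --- passing from $L[z][x^*]\models\Psi(f(x^*),a)$ to $V\models\Psi(f(x^*),a)$ --- is free, because $L[z][x^*]=L[z,x^*]$ is an inner model of $V$ containing all ordinals, and Shoenfield absoluteness holds for $\mathbf{\Pi^1_2}$ formulas between $V$ and any such inner model.

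The paper proves $(2)\Rightarrow(1)$ by a different, much more elementary route that avoids the forcing relation entirely, and you should compare it with your plan. By Proposition \ref{omega1_inaccessible_implies_sigma_1_2_measurable}, if $\omega_1$ is inaccessible to the reals then all $\mathbf{\Sigma^1_2}$ sets are already $I$-measurable; so one may assume there is a real $z$ with $\omega_1^{L[z]}=\omega_1$. Given $A\in\mathbf{\Delta^1_2}$ and $B\in\mathbb{P}_I$, absorb all parameters into $z$ and write the two $\mathbf{\Sigma^1_2}$ sets $B\cap A$ and $B\setminus A$ as unions of $\aleph_1$ Borel sets. Since $L[z]$ and $V$ agree on $\omega_1$, this decomposition is absolute between them, so every Borel piece has a code in $L[z]$. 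By $(2)$ there is a $\mathbb{P}_I$-generic over $L[z]$ in $B$; it lies in some piece, and since a generic avoids every $I$-small Borel set coded in $L[z]$ (the ccc characterization of genericity you stated), that piece is $I$-positive in $L[z]$, hence $I$-positive in $V$ by the $\mathbf{\Sigma^1_2}$-definability of $I$ together with Shoenfield. That piece is then a Borel $I$-positive subset of $B$ contained in $A$ or in $B\setminus A$, which is exactly $I$-measurability at the condition $B$. This decomposition argument is what should replace your unproven definability claim.
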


\begin{proof}

For $(1) \Rightarrow (2)$, we already know that $(1)$ implies $\Sigma^1_3$ $\mathbb{P}_I$-generic absoluteness. The set of $L[z]$ generics in $B$ is $\Pi^1_2(z)$ in this case, and can be forced to be nonempty. The conclusion follows.

As to $(2) \Rightarrow (1)$, using corollary \ref{omega1_inaccessible_implies_sigma_1_2_measurable}
we may assume there is a real $z$ such that $\omega_1^{L[z]}=\omega_1$. Let $A$ be $\Delta^1_2(a)$ and $B\in\mathbb{P}_I$. For ease of notation, let us assume that $z$, $a$ and $B$ are all constructible, so that we can work in $L$ and assume $\omega_1^L=\omega_1$.

We now decompose both $B\cap A$ and $B-A$ into $\aleph_1$ Borel sets, as both are $\Sigma^1_2$ sets. The decomposition is absolute between $L$ and $\mathbb{V}$, since they both agree on the first uncountable ordinal. In particular, all those Borel sets are constructible. By assumption, there is a generic over $L$ in $B$, which is, one of those Borel sets has an element which is $L$ generic. It follows that this set is $I$-positive in $L$. Our definability assumption on $I$ obligates it to be $I$-positive in $\mathbb{V}$ as well, and the proof is completed.
\end{proof}

We conclude the section with two remarks on $\mathbb{P}_I$-Borel uniformization.

\begin{rem}
The notion of $\mathbb{P}_I$-Borel uniformization is related to the notion of Borel canonization of Kanovei, Sabok and Zapletal \cite{ksz}:

If $\mathbf{\Pi_{2}^{1}}$ has $\mathbb{P}_I$-Borel uniformization then there is
Borel canonization of analytic equivalence relations.
\end{rem}

\begin{proof}
We use the rank defined in \cite{my_Borel_canonization} section 3. Let $(x,f)\in A$ if and only if $f\in WO$ and $\delta(x)\leq f$. $A$ is $\Pi^1_2$.
Since all classes are Borel, the sections of $A$ are nonempty, so we can use $\mathbb{P}_I$-Borel uniformization and find $B\in\mathbb{P}_{I}$ and $f:B\to WO$ Borel such that
$\delta(x)\leq f(x).$ The boundedness theorem completes the argument.
\end{proof}

\begin{rem}
The following are equivalent:
\begin{enumerate}
\item $\Sigma_{3}^{1}$-$\mathbb{P}_I$-generic absoluteness.
\item Given $\Phi(x,y,\bar{z})$ a $\Pi_{1}^{1}$ formula, the statement ''$\Phi(x,y,\bar{z})$ has nonempty sections'' is absolute between $\mathbb{P}_I$-generic extensions.
\item Given $\Phi(x,y,\bar{z})$ a $\Pi_{1}^{1}$ formula, the statement ''$\Phi(x,y,\bar{z})$ is a graph of a function'' is absolute between $\mathbb{P}_I$-generic extensions.
\end{enumerate}
\end{rem}

\begin{proof}
Above results and $\Pi_{1}^{1}$ uniformization.
\end{proof}

\section{\label{sec:sigma_1_2 transcendence to PSP}From transcendence over
$L$ to $PSP_{I}\mathbf{\Sigma_{2}^{1}}$}

In the following section we find transcendence properties over $L$ which are sufficient conditions for $PSP_{I}(\mathbf{\Sigma_{2}^{1})}$. 

\begin{thm}
\label{main_thm_Sigma_1_2 mutually_generics}Let $I$ be $\mathbf{\mathbf{\Sigma_{2}^{1}}}$
or $\mathbf{\Pi_{2}^{1}}$ and provably
ccc. If for any real $z$ and $B\in\mathbb{P}_{I}$ there is a perfect set $P \subseteq B$ of $\mathbb{P}_{I}*\dot{\mathbb{P}_{I}}$
generics over $L[z]$, then $PSP_{I}(\mathbf{\Sigma_{2}^{1})}.$
\end{thm}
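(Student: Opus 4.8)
The plan is to use the hypothesis to manufacture the perfect set outright, and then show its elements are pairwise $E$-inequivalent, the inequivalence coming from the fact that a $\mathbb{P}_I$-generic avoids the ($I$-small) $E$-class of an earlier generic. So, given a $\mathbf{\Sigma^1_2}$ equivalence relation $E$ on a Borel $I$-positive $B$ with $I$-small classes, I would fix a real $z$ coding both a $\mathbf{\Sigma^1_2}$ definition of $E$ and a Borel code for $B$, and apply the hypothesis to $z$ and $B$ to obtain a perfect $P\subseteq B$ of $\mathbb{P}_I*\dot{\mathbb{P}}_I$ generics over $L[z]$. By Definition \ref{definition PSP} it then suffices to prove that any two distinct $x,y\in P$ satisfy $\neg E(x,y)$. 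For such a pair the genericity of $(x,y)$ means $x$ is $\mathbb{P}_I$-generic over $L[z]$ and $y$ is $\mathbb{P}_I$-generic over $L[z][x]$, so everything reduces to the following sublemma: \emph{if $N$ is a model containing $x,z$ in which $\mathbb{P}_I$ is ccc, $A$ is a $\mathbf{\Sigma^1_2}(x,z)$ set that is $I$-small in $\mathbb{V}$, and $y$ is $\mathbb{P}_I$-generic over $N$, then $y\notin A$.} I would apply it with $N=L[z][x]$ and $A=[x]_E=\{w:E(x,w)\}$, which is $I$-small in $\mathbb{V}$ by the hypothesis on $E$.

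To prove the sublemma I argue by contradiction, assuming $y\in A$. Writing $A=\{w:\exists u\,\Psi(x,w,u)\}$ with $\Psi$ a $\mathbf{\Pi^1_1}(z)$ matrix, Shoenfield absoluteness gives $L[z][x][y]\models y\in A$, so there is a witness $u_0$ with $\Psi(x,y,u_0)$ lying in $L[z,x,y]\subseteq N[y]$. Since $I$ is provably ccc, $\mathbb{P}_I$ is ccc in $N=L[z][x]$, whence $\omega_1^{N[y]}=\omega_1^{N}$; as $u_0$ is a real of $L[z,x,y]$ it appears at some stage $\gamma<\omega_1^{N[y]}=\omega_1^{N}$ of the constructible hierarchy over $z,x,y$, i.e. $u_0\in L_\gamma[z,x,y]$. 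Consequently $y$ belongs to the Borel set $B_\gamma=\{w:\exists u\in L_\gamma[z,x,w]\,\Psi(x,w,u)\}$, which by construction satisfies $B_\gamma\subseteq A$. The point of stratifying by $\gamma<\omega_1^{N}$ is that $\gamma$ is then coded by a real of $N$, so, together with $x,z\in N$, the Borel code of $B_\gamma$ lies in $N$.

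Now I transfer smallness downward: $B_\gamma\subseteq A$ is $I$-small in $\mathbb{V}$, and since $I$ is $\mathbf{\Sigma^1_2}$ or $\mathbf{\Pi^1_2}$, the statement ``$B_\gamma\in I$'' is Shoenfield absolute between $\mathbb{V}$ and $N$, so $B_\gamma$ is already $I$-small in $N$. But a $\mathbb{P}_I$-generic real over $N$ avoids every Borel $I$-small set coded in $N$: for any such $C$ and any condition $D$, the condition $D\setminus C$ is $I$-positive and forces the generic out of $C$, so these conditions are dense. Hence $y\notin B_\gamma$, contradicting $y\in B_\gamma$. This establishes the sublemma, so $\neg E(x,y)$; running it over all distinct pairs from $P$ exhibits $P$ as a perfect set of pairwise $E$-inequivalent reals, proving $PSP_I(\mathbf{\Sigma^1_2})$.

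The main obstacle is the middle step, namely locating the Shoenfield witness. The class $[x]_E$ is genuinely $\mathbf{\Sigma^1_2}$ rather than Borel, and its decomposition into $\aleph_1$ Borel pieces need not be absolute between $L[z][x]$ and $\mathbb{V}$ once these models disagree about $\omega_1$; a naive ``$y$ avoids every piece'' argument would fail because $y$ could in principle enter $[x]_E$ through a branch indexed by a new ordinal. The resolution is exactly that genericity of $y$ over $L[z][x]$, combined with $\mathbb{P}_I$ being provably ccc (so that $\omega_1^{L[z][x]}$ is preserved into $L[z][x][y]$), confines the witnessing branch to ordinals below $\omega_1^{L[z][x]}$, pinning $y$ into a single piece $B_\gamma$ that is both coded and recognized as $I$-small inside $L[z][x]$. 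The remaining ingredients — Shoenfield absoluteness of $E(x,y)$, absoluteness of $I$-smallness of Borel sets, and avoidance of ground-model small sets by generics — are routine once $I$ is $\mathbf{\Sigma^1_2}$ or $\mathbf{\Pi^1_2}$ and provably ccc.
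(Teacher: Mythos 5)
Your overall route is genuinely different from the paper's: the paper never inspects a Shoenfield witness directly, but instead first shows that forcing with $B$ over $\mathbb{V}$ puts the generic into a new $E$-class, and then runs a Foreman--Magidor style density argument inside $L$ (no condition $(B_1,\dot B_2)$ below $(B,\dot B)$ can force $z_1 E z_2$, as one sees by interpreting $\dot B_2$ at a generic $z_1\in\mathbb{V}$ and then forcing over $\mathbb{V}$), finishing with Shoenfield absoluteness for generic pairs. Your reduction to the sublemma --- a $\mathbb{P}_I$-generic over $N=L[z][x]$ avoids every $\mathbf{\Sigma^1_2}(z,x)$ set that is $I$-small in $\mathbb{V}$, with the ccc used to trap the witness below $\omega_1^N$ --- is a viable and more direct alternative, and it correctly isolates the only part of mutual genericity actually needed (that $y$ is generic over $L[z][x]$).

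There is, however, one concrete false step: the set $B_\gamma=\{w:\exists u\in L_\gamma[z,x,w]\,\Psi(x,w,u)\}$ is not Borel. For a fixed real coding $\gamma$, the reals of $L_\gamma[z,x,w]$ admit a Borel-in-$w$ enumeration $(u_n(w))_n$, but plugging these into the $\mathbf{\Pi^1_1}$ matrix $\Psi$ yields a countable union of $\mathbf{\Pi^1_1}$ sets, which is $\mathbf{\Pi^1_1}$ and in general properly so (if $\Psi$ happens not to depend on $u$, then $B_\gamma$ is exactly the $\mathbf{\Pi^1_1}$ set defined by $\Psi$). This breaks the two steps where Borelness is essential: the definability hypothesis on $I$ concerns only Borel codes, so Shoenfield absoluteness of ``$B_\gamma\in I$'' between $N$ and $\mathbb{V}$ is not available for a $\mathbf{\Pi^1_1}$ set; and the density argument that a generic over $N$ avoids $I$-small sets coded in $N$ likewise requires an $I$-small \emph{Borel} set coded in $N$. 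The repair is standard and stays inside your argument: stratify by the rank of the witnessing tree as well. Write $\Psi(x,w,u)$ as well-foundedness of a tree $T_{x,w,u}$ recursive in the parameters, and set
\[
B_{\gamma,\alpha}=\{w\ :\ \exists u\in L_\gamma[z,x,w]\ \mathrm{rank}(T_{x,w,u})\le\alpha\}.
\]
For $\gamma,\alpha<\omega_1^N$ these sets are honestly Borel with codes in $N$ (constituents of a $\mathbf{\Pi^1_1}$ set are Borel, uniformly in a real coding $\alpha$), they are contained in $[x]_E$, and your ccc argument bounds \emph{both} indices: $u_0$ appears at some $\gamma<\omega_1^{N[y]}=\omega_1^N$, and $T_{x,y,u_0}$ is a countable object of $N[y]$ whose (absolute) rank is therefore also below $\omega_1^{N[y]}=\omega_1^N$. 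With $B_{\gamma,\alpha}$ in place of $B_\gamma$ the rest of your proof goes through verbatim. (Alternatively, one can keep the $\mathbf{\Pi^1_1}$ set $B_\gamma$ and apply, inside $N$, the paper's dichotomy for universally Baire sets and ccc ideals to cover it by a Borel $I$-small set coded in $N$; but some additional step of this kind is unavoidable.)
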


We will say that \emph{$I$ is homogeneous} if $\mathbb{P}_I$ is a weakly homogeneous forcing notion. The forcing notion \cite{mutual_generics} 2.6 has natural counterparts for any $\sigma$-ideal with the Fubini property, leading to the following corollary:

\begin{cor}
Let $I$ be $\mathbf{\mathbf{\Sigma_{2}^{1}}}$ or $\mathbf{\Pi_{2}^{1}}$, provably
ccc, homogeneous and with the Fubini property. If $\omega_1$ is inaccessible to the reals then $PSP_{I}(\mathbf{\Sigma_{2}^{1})}.$
\end{cor}

See corollaries \ref{meager case} and \ref{null case} for the application of theorem \ref{main_thm_Sigma_1_2 mutually_generics} on the meager and null ideals. Note that although the last corollary can be applied to those ideals, it does not produce any new result -- when $\omega_1$ is inaccessible to the reals, $\mathbf{\Sigma^1_2}$ sets have the Baire property and are Lebesgue measurable, and Mycielski's theorems \ref{mycielski_meager} and \ref{Mycielski_measure_zero} are valid.

\begin{proof}
(of theorem \ref{main_thm_Sigma_1_2 mutually_generics}) Let $E$ be a $\mathbf{\Sigma_{2}^{1}}$ equivalence relation on $B$ Borel $I$-positive with
$I$-small classes. We may assume $E$ is lightface $\Sigma^1_2$ and $B$ is constructible.

We first claim that the generic added by forcing
with $B$ belongs to a new $E$-class. Otherwise,
fix $z\in\mathbb{V}$ and $B'\subseteq B$ such that
\[
B'\Vdash x_{G}\in[z].
\]
Let $M$ be an elementary submodel of the universe containing $z$
and all the relevant information. Let $x\in B'$ be $M$-generic.
Then $M[x]\models xEz$, and so by $\mathbf{\Pi^1_1}$ absoluteness, $\mathbb{V}\models xEz.$
We have thus shown that all the $M$-generics in $B'$ are in the
equivalence class of $z$, hence $[z]$ is $I$-positive -- a contradiction. 

Consider the two-step iteration $\mathbb{P}_{I}*\dot{\mathbb{P}_{I}}$.

\begin{claim} If $B_1\subseteq B$ and $L \models B_1\Vdash \dot{B_2}\subseteq B$ then $L\models(B_{1},\dot{B_{2}})\nVdash(z_{1}Ez_{2})$, where $z_1,z_2$ are the $\mathbb{P}_I$-generics.
\end{claim}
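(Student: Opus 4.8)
The plan is to assume, toward a contradiction, that in $L$ some condition $(B_1,\dot B_2)$ with $B_1\subseteq B$ and $B_1\Vdash\dot B_2\subseteq B$ forces $z_1Ez_2$, and to reduce this to the statement already proved at the start of this proof, but with $L[z_1]$ in place of $\mathbb{V}$. By hypothesis there is (inside any $B_1\in\mathbb{P}_I$) a perfect set of $\mathbb{P}_I*\dot{\mathbb{P}_I}$-generics over $L$, so I may fix an actual real $z_1$ that is $\mathbb{P}_I$-generic over $L$ with $z_1\in B_1$; then $L[z_1]\subseteq\mathbb{V}$ is a concrete inner model. Setting $B_2:=\dot B_2[z_1]$, genericity makes $B_2$ a Borel $I$-positive condition of $\mathbb{P}_I^{L[z_1]}$ with $B_2\subseteq B$, and the forcing theorem turns the assumption into: in $L[z_1]$, $B_2\Vdash_{\mathbb{P}_I}x_{gen}Ez_1$. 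It therefore suffices to show that the $\mathbb{P}_I$-generic over $L[z_1]$ lying in the subcondition $B_2\subseteq B$ falls into a \emph{new} $E$-class, in particular that it is not $E$-equivalent to the ground-model real $z_1$.

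Next I would replay the opening argument of this proof verbatim, reading $L[z_1]$ for $\mathbb{V}$. That argument goes through over any ground model $N$ provided the relevant condition is $I$-positive in $N$ and every $E$-class is $I$-small in $N$. Concretely, working in $N=L[z_1]$: pick a countable $M\preceq H_\theta^{L[z_1]}$ containing $z_1$, $B_2$ and the lightface definitions of $E$ and $I$. Since $I$ is provably ccc, $\mathbb{P}_I$ is proper in $L[z_1]$, so the set of $M$-generic reals in $B_2$ is $I$-positive. For each such $x$ we have $M[x]\models xEz_1$ by elementarity, whence $L[z_1]\models xEz_1$ by upward $\mathbf{\Sigma^1_2}$-absoluteness. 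Thus $[z_1]^{L[z_1]}$ contains an $I$-positive set, i.e.\ it is $I$-positive in $L[z_1]$.

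The whole argument therefore rests on the remaining ingredient, which is the step I expect to be the main obstacle: that $E$ has $I$-small classes not merely in $\mathbb{V}$ but in $L[z_1]$. As in the proof of Theorem \ref{prop_measurability_vs_transcendence_for_ccc}, I would first use Proposition \ref{omega1_inaccessible_implies_sigma_1_2_measurable} to dispose of the case where $\omega_1$ is inaccessible to the reals, and otherwise arrange $\omega_1^L=\omega_1$; since $\mathbb{P}_I$ is provably ccc and hence $\omega_1$-preserving, $L$, $L[z_1]$ and $\mathbb{V}$ then all compute $\omega_1$ alike. Because $I$ is $\mathbf{\Sigma^1_2}$ or $\mathbf{\Pi^1_2}$, the predicate ``$B_c\in I$'' is absolute among these three models by Shoenfield, and the $\mathbf{\Sigma^1_2}$ relation $E$ is Shoenfield-absolute in both directions, so the canonical decomposition of each class into $\aleph_1$ Borel pieces transfers between the models. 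The delicate point is that $I$-smallness of a $\mathbf{\Sigma^1_2}$ class is an $\aleph_1$-union statement and is \emph{not} preserved by this decomposition alone: a union of $\aleph_1$ many $I$-small Borel sets can be $I$-positive, so agreement on the Borel pieces does not by itself transfer smallness. To close this gap I would use the forcing characterisation of smallness available for provably ccc $I$, namely that ``$[w]$ is $I$-small'' is equivalent to ``$\mathbb{P}_I\Vdash\neg(x_{gen}\,E\,w)$'', and verify, along the lines of Ikegami's absoluteness analysis in \cite{Ikegami_main}, that the two sides of this equivalence are computed identically in $L[z_1]$ and in $\mathbb{V}$; transferring $I$-smallness of the classes down from $\mathbb{V}$ to $L[z_1]$ then contradicts the $I$-positivity of $[z_1]^{L[z_1]}$ obtained above, completing the proof of the claim.

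For completeness I would record how the claim feeds the theorem, since this fixes the direction in which $E$ must be reflected: if two distinct members $x,y$ of the perfect set of $\mathbb{P}_I*\dot{\mathbb{P}_I}$-generics over $L$ satisfied $xEy$ in $\mathbb{V}$, then $L[x,y]\models xEy$ by the two-directional Shoenfield absoluteness of the $\mathbf{\Sigma^1_2}$ relation $E$, so by the forcing theorem some condition of the iteration would force $z_1Ez_2$ in $L$, contradicting the claim; hence the perfect set consists of pairwise $E$-inequivalent reals.
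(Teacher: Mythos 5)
Your opening moves coincide with the paper's: fix $z_1\in\mathbb{V}$ which is $\mathbb{P}_I$-generic over $L$ with $z_1\in B_1$, pass to $B_2=\dot B_2[z_1]\subseteq B$, and restate the assumption as $L[z_1]\models B_2\Vdash x_{gen}Ez_1$; your derivation that $[z_1]_E$ as computed in $L[z_1]$ is then $I$-positive in $L[z_1]$ is also sound. But the step you yourself flag as the main obstacle is a genuine, fatal gap: you need the $E$-classes to be $I$-small \emph{in} $L[z_1]$, and this does not follow from the hypotheses, because ``$E$ has $I$-small classes'' is not downward absolute to inner models. A cheap illustration: the lightface $\mathbf{\Sigma^1_2}$ relation $xEy\Leftrightarrow x=y\vee(x\in L\wedge y\in L)$ has $I$-small classes in $\mathbb{V}$ whenever $\mathbb{R}^L\in I$, yet in $L$ every class is all of $\mathbb{R}^L$, which is $I$-positive there. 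The same threat arises at exactly the point you need smallness: for the degree relation $xEy\Leftrightarrow L[x]=L[y]$ (again $\mathbf{\Sigma^1_2}$, and with $I$-small classes in $\mathbb{V}$ under the theorem's hypothesis, since a generic over $L[x]$ makes $\mathbb{R}^{L[x]}$ small), the class of $z_1$ computed in $L[z_1]$ is the set of reals of maximal constructibility degree there, which there is no reason to expect to be $I$-small in $L[z_1]$. So ``$[z_1]^{L[z_1]}$ is $I$-positive in $L[z_1]$'' is not by itself a contradiction, and your proposed repairs cannot make it one: the equivalence of ``$[w]\in I$'' with ``$\mathbb{P}_I\Vdash\neg(x_{gen}Ew)$'' needs, in one direction, the section~2 analysis of $\mathbf{\Pi^1_2}$ sets, which does not apply to the $\mathbf{\Sigma^1_2}$ set $[w]$; and transferring the forcing statement ``$\Vdash\neg(x_{gen}Ez_1)$'' from $\mathbb{P}_I^{\mathbb{V}}$ to $\mathbb{P}_I^{L[z_1]}$ is a $\Sigma^1_3$-level absoluteness between two \emph{different} forcing notions --- exactly the kind of principle this paper shows to be equivalent to measurability hypotheses, and those fail inside $L[z_1]$ (no real of $L[z_1]$ is $\mathbb{P}_I$-generic over $L[z_1]$).

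The paper's proof avoids the problem by never relativizing the ``new class'' argument below $\mathbb{V}$: instead of a generic over $L[z_1]$, it takes $z_2\in B_2$ to be $\mathbb{P}_I$-generic over $\mathbb{V}$. By provable ccc-ness and the $\mathbf{\Sigma^1_2}$/$\mathbf{\Pi^1_2}$ definability of $I$, genericity over $\mathbb{V}$ implies genericity over $L[z_1]$, so $(z_1,z_2)$ is still a $\mathbb{P}_I*\dot{\mathbb{P}_I}$-generic pair over $L$ below $(B_1,\dot B_2)$, and the assumed forcing statement yields $L[z_1][z_2]\models z_1Ez_2$; upward $\mathbf{\Sigma^1_2}$-absoluteness lifts this to $\mathbb{V}[z_2]$. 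That contradicts the fact established in the first part of the theorem's proof --- that a $\mathbb{P}_I$-generic over $\mathbb{V}$ lying in $B$ realizes a new $E$-class --- and that argument uses $I$-smallness of the classes only in $\mathbb{V}$, where it is actually given. In short: keep the smallness argument anchored at $\mathbb{V}$ and lift the second real above $\mathbb{V}$; pushing the argument down into $L[z_1]$, as you do, requires a hypothesis that is genuinely unavailable there.
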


\begin{proof}
The idea is similar to the one of the proof of \cite{foreman_magidor} theorem 3.4. Note that the first generic we will mention is an $L$-generic that is an element of $\mathbb{V}$, while the second one is a real $\mathbb{V}$-generic.

Assume otherwise, and let $(B_{1},\dot{B_{2}})\in\mathbb{P}_{I}*\dot{\mathbb{P}_{I}}$
be as above and such that $L\models(B_{1},\dot{B_{2}})\Vdash z_{1}Ez_{2}$ . Let
\[
z_{1}\in\mathbb{V}
\]
 be $\mathbb{P}_{I}$-generic over $L$ such that $z_{1}\in B_{1}.$
Then $\dot{B_{2}}[z_{1}]$, the interpretation of $\dot{B_{2}}$
by the generic filter of $z_{1},$ is an $I$-positive Borel set
in $L[z_{1}]$ -- we denote it by $B_{2}$ - which is a subset of $B$. Let $z_{2}\in B_{2}$ be
$\mathbb{P}_{I}$ generic over $\mathbb{V}.$ Then $z_{2}$ is also
$\mathbb{P}_{I}$ generic over $L[z_{1}]$. By the assumption
\[
L[z_{1}][z_{2}]\models z_{1}Ez_{2}
\]
 and hence $\mathbb{V}\models z_{1}Ez_{2}.$ However, we have shown
that the $\mathbb{P}_{I}$-generic $z_{2} \in B$ cannot be an element
of the ground model set $[z_{1}]_{E}$ -- a contradiction.
\end{proof}

It follows that in $L$, \[D=\{(B_{1},\dot{B_{2}})\ :\ \neg(B_1\subseteq B \wedge B_1 \Vdash \dot{B_2}\subseteq B)\ or\ (B_{1},\dot{B_{2}})\Vdash\neg(z_1 E z_2)\}\]
is dense in $\mathbb{P}_{I}*\dot{\mathbb{P}_{I}}$. Therefore, given
$(x,y)\in B^2$ which is $\mathbb{P}_{I}*\dot{\mathbb{P}_{I}}$ generic over $L$,
\[
L[x][y]\models\neg(xEy)
\]
which together with Shoenfield's absoluteness implies that $x$ and
$y$ are inequivalent. Since we assumed there is a perfect set $P \subseteq B$ of
$\mathbb{P}_{I}*\dot{\mathbb{P}_{I}}$ generics over $L$, that concludes
the proof.
\end{proof}

\section{\label{sec:sigma_1_2 psp to transencdence}From $PSP_{I}\mathbf{\Sigma_{2}^{1}}$
to transcendence over $L$}

In the following section we find necessary conditions for $PSP_{I}(\mathbf{\Sigma_{2}^{1}})$ and $PSP_{I}(\mathbf{\Delta_{2}^{1}})$, for $I$ a $\mathbf{\Sigma_{2}^{1}}$ and provably ccc $\sigma$-ideal. The author wishes to thank Amit Solomon for his help with obtaining the following two results.

\begin{thm}
\label{necessary_sigma_1_2}
Let $I$ be $\mathbf{\Sigma_{2}^{1}}$ and provably ccc. $PSP_{I}(\mathbf{\Sigma_{2}^{1}})$
implies that for every $B \in \mathbb{P}_I$ and for every real $z$ there exists a perfect set $P \subseteq B$ of $\mathbb{P}_{I}$-generics
over $L[z]$.
\end{thm}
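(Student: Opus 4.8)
The plan is to argue by contraposition: assuming there are $B\in\mathbb{P}_I$ and a real $z$ admitting no perfect set of $\mathbb{P}_I$-generics over $L[z]$ inside $B$, I will manufacture a $\mathbf{\Sigma^1_2}$ equivalence relation on a Borel $I$-positive set, with $I$-small classes, that has no perfect set of pairwise inequivalent elements, contradicting $PSP_I(\mathbf{\Sigma^1_2})$. The central object is the set $N_z$ of reals that are \emph{not} $\mathbb{P}_I$-generic over $L[z]$ (write $G_z=\mathbb{R}\setminus N_z$ for the generics). Writing $N_z=\bigcup\{B_c : c\in L[z],\ c\in BC,\ B_c\in I\}$ and using that $I$ is $\mathbf{\Sigma^1_2}$ and that "$c\in L[z]$'' is $\mathbf{\Sigma^1_2}(z,c)$, the set $N_z$ is $\mathbf{\Sigma^1_2}(z)$. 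Since $I$ is provably ccc, every maximal antichain of $\mathbb{P}_I^{L[z]}$ is countable with $I$-small complement, so a real is $\mathbb{P}_I$-generic over $L[z]$ exactly when it avoids every $I$-small Borel set coded in $L[z]$, that is, exactly when it lies outside $N_z$. On $B$ I would then consider the \emph{lumping relation}
\[
x\,E\,y\iff x=y\ \text{ or }\ (x\in N_z\wedge y\in N_z),
\]
which is $\mathbf{\Sigma^1_2}(z)$, is an equivalence relation, and whose classes are the singletons $\{x\}$ for $x\notin N_z$ together with the single class $N_z\cap B$.

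The main case is $N_z\cap B\in I$. Then all $E$-classes are $I$-small, so $E$ is an admissible instance of $PSP_I(\mathbf{\Sigma^1_2})$ and yields a perfect set $P\subseteq B$ of pairwise inequivalent reals. At most one element of $P$ lies in $N_z\cap B$ (it is a single class), so after deleting it and passing to a perfect subset $P'$, every element of $P'$ avoids $N_z$ and is therefore $\mathbb{P}_I$-generic over $L[z]$ by the characterization above. Thus $P'\subseteq B$ is a perfect set of generics, contradicting the assumption. To control the hypothesis $N_z\cap B\in I$ I would exploit that $PSP_I(\mathbf{\Sigma^1_2})$ implies $PSP_I(\mathbf{\Delta^1_2})$: the corresponding (easier) statement for $\mathbf{\Delta^1_2}$ then produces a $\mathbb{P}_I$-generic over $L[z]$ in \emph{every} condition and, through Theorem~\ref{prop_measurability_vs_transcendence_for_ccc}, the $I$-measurability of $\mathbf{\Delta^1_2}$ sets together with the density of $L[z]$-generics. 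In particular, when $\omega_1^{L[z]}<\omega_1$ there are only countably many Borel codes in $L[z]$, so $N_z$ is a countable union of $I$-small Borel sets, hence $I$-small, and the lumping relation closes the argument outright.

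The main obstacle is the remaining case $\omega_1^{L[z]}=\omega_1$ with $N_z\cap B$ $I$-positive, which genuinely occurs under the hypothesis (for instance in models satisfying $\mathbf{\Delta^1_2}$-BP but not $\mathbf{\Sigma^1_2}$-BP when $I$ is the meager ideal). Here the class $N_z\cap B$ is too large, and I would instead refine it by a least-code rank: fixing the canonical $L[z]$-enumeration $\langle C_\alpha:\alpha<\omega_1\rangle$ of the $I$-small Borel sets, set $\rho(x)$ to be the least $\alpha$ with $x\in C_\alpha$ for $x\in N_z$, and let
\[
x\,E'\,y\iff x=y\ \text{ or }\ (x,y\in N_z\wedge\rho(x)=\rho(y)).
\]
Using the $\mathbf{\Sigma^1_2}$ good wellordering of the constructible reals, $E'$ is $\mathbf{\Sigma^1_2}(z)$, and each class lies inside a single $C_\alpha$, hence is $I$-small. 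A $PSP_I(\mathbf{\Sigma^1_2})$-transversal $P$ now splits as $(P\cap G_z)\sqcup(P\cap N_z)$ with $\rho$ injective on $P\cap N_z$; if $P\cap N_z$ contains no perfect subset then $P\cap G_z$ does, delivering the desired perfect set of generics. The crux is therefore to rule out a perfect $Q\subseteq N_z$ on which $\rho$ is injective. I expect this to be the hard step: when $\neg CH$ holds it is immediate by cardinality (such a $Q$ would inject $2^{\aleph_0}$ reals into $\omega_1^{L[z]}=\omega_1$), but under $CH$ it must be extracted from the genericity structure, by reflecting the existence of $Q$ into $L[z][p]$ (for $p$ a code of $Q$) and invoking Shoenfield absoluteness together with the density of $\mathbb{P}_I$-generics over $L[z]$ secured above. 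This is precisely the point at which the measurability and transcendence machinery of Section~\ref{sec:Generic--Absoluteness} must do the real work.
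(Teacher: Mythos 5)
Your reduction is essentially the paper's own first step: your relation $E'$, built from the least-code rank $\rho$, is exactly the relation $E$ the paper defines via the statement $D(x,c)$ (``$c$ is the $\leq_L$-least code in $L[z]$ of an $I$-small Borel set containing $x$''), and your easy cases ($\omega_1^{L[z]}<\omega_1$, $N_z\cap B\in I$, or $\neg CH$) are handled correctly. But the argument breaks exactly where you flag it, and the repair you sketch cannot work. First, the dichotomy ``if $P\cap N_z$ contains no perfect subset then $P\cap G_z$ does'' is not a theorem: a perfect set can be partitioned into two pieces neither of which contains a perfect subset (Bernstein sets), and although Mansfield--Solovay applies to the $\mathbf{\Sigma^1_2}$ set $P\cap N_z$ (thin implies contained in $L[z,p]$), deleting a thin set of size $\aleph_1$ from $P$ under $CH$ yields at best \emph{one} generic in $P$, not a perfect set of them. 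Second, and more fundamentally, the ``crux'' --- ruling out a perfect $Q\subseteq N_z$ on which $\rho$ is injective --- is not something the hypotheses allow you to rule out: such a $Q$ is itself a perfectly legitimate set of pairwise $E'$-inequivalent elements, so the transversal handed to you by $PSP_I(\mathbf{\Sigma_2^1})$ may well \emph{be} such a $Q$, containing no generic whatsoever, and the paper's proof never attempts to exclude this. Moreover, the tools you invoke are at the wrong complexity level: ``there is a perfect set of $\mathbb{P}_I$-generics in $B$'' and the statements about $Q$ are $\Sigma^1_3$, beyond the reach of Shoenfield absoluteness and of reflection into $L[z][p]$.

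The missing idea is the paper's second move: do not try to extract generics from the transversal $P$ at all; instead use $P$ as a \emph{Sacks condition}. Since $PSP_I(\mathbf{\Sigma_2^1})$ implies $PSP_{countable}(\mathbf{\Delta_2^1})$, Theorem~\ref{thm_countable_ideal)} gives $\mathbb{R}^{L[w]}\neq\mathbb{R}$ for every real $w$, which by \cite{ikegami_sacks} yields $\Sigma^1_3$-Sacks-generic-absoluteness. Forcing with Sacks below $P$, any \emph{new} real of $P$ must lie in a new $E$-class (by Shoenfield, $P$ stays a transversal, and ``the class of $v$ misses $P$'' is preserved for each ground-model $v$); and a new class must be a generic singleton, because $D(x_{gen},c)$ for some $c\in L[z]$ would make ``$\exists y\in B\ D(y,c)$'' a true $\Sigma^1_2$ statement in the extension, hence true in the ground model, placing $x_{gen}$ in an old class. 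Since Sacks forcing adds a perfect set of new reals inside the condition $P$, the extension satisfies the $\Sigma^1_3$ statement ``there is a perfect set $P'\subseteq B$ of $\mathbb{P}_I$-generics over $L[z]$,'' and Sacks absoluteness pulls this statement back to the ground model. This forcing-plus-absoluteness step is precisely what substitutes for your attempted direct extraction, and without it (or some equivalent) your argument has a genuine hole at its central case.
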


\begin{proof}
Since $PSP_{I}(\mathbf{\Sigma_{2}^{1})}$ clearly implies $PSP_{I}(\mathbf{\Delta_{2}^{1})}$ and $PSP_{countable}(\mathbf{\Delta_{2}^{1})}$, theorem \ref{thm_countable_ideal)} and \cite{ikegami_sacks} guarantees that Sacks forcing preserves $\Sigma_{3}^{1}$ statements. A
perfect set in $B$ of $\mathbb{P}_{I}$-generics over $L$ exists iff
\[\exists P\subseteq B \ perfect\ (\forall x\in P\ \forall c\ \left((c\in L)\wedge(c\in BC)\wedge(B_{c}\in I)\Rightarrow x\notin B_{c}\right).\]
That is a $\Sigma_{3}^{1}$ statement, hence if Sacks forcing adds
a perfect subset of $B$ of $\mathbb{P}_I$-generics, we are done. 

We will find a condition $P\subseteq B$
in Sacks forcing such that any new real added to $P$ must be $\mathbb{P}_{I}$-generic. Since Sacks forcing adds a perfect set of new reals to the
condition $P$, that will be enough.

The first stage is defining a $\Sigma^1_2$ equivalence relation on $B$ whose classes are either $I$-small sets in $L$, or singletons which are $\mathbb{P}_I$-generic elements over $L$.

Let $I_{L}(c)$ be the statement \[(c\in L)\wedge(c\in BC)\wedge(B_{c}\in I).\] Let
$D(x,c)$ be the statement: $I_{L}(c)$, $x \in B_c$ and 
\[
\forall c' \ I_{L}(c') \wedge x \in B_{c'} \Rightarrow(c\leq_{L}c'),
\]
which is, $c$ is the 1st $I$-small set in $L$ that has $x$ as
one of its elements. Note that $D(x,c)$ is $\Sigma_{2}^{1}$
since it can be decided inside a large enough countable model. We then consider
the following $\Sigma_{2}^{1}$ equivalence relation on $B$:
\[
\forall x,y\in B: xEy\Leftrightarrow(x=y)\vee\ \exists c\ (D(x,c)\wedge D(y,c)).
\]
Under $E$, the $\mathbb{P}_{I}$-generics over $L$ form equivalence
classes that are singletons. The rest of the classes are all contained
in an $I$-small set of $L$, hence are $I$-small. Since all
classes are $I$-small, $PSP_{I}(\mathbf{\Sigma_{2}^{1})}$ implies
the existence of a perfect set $P \subseteq B$ of pairwise inequivalent elements. 

We first show that any new Sacks real in $P$ must belong to a new $E$-class. By Shoenfield's
absoluteness, $P$ remains a perfect set of pairwise inequivalent elements in the $\mathbb{P}_I$-generic extension.
In addition, if the class of $z\in\mathbb{V}$ had no representative
in $P$ -
\[
\forall x\ x\in P\rightarrow\neg(xEz)
\]
- then $P\cap[z]_{E}$ will remain empty in the generic extension as
well. Therefore the new Sacks real in $P$ must indeed belong to a new class.

We can now complete the proof by showing a Sacks real in $P$ is $\mathbb{P}_I$-generic over $L$. Indeed, if it hadn't been, there would be $c\in L$ such that $D(x_{gen},c)$, so after forcing  \[\exists y\in B D(y,c).\] That is a $\Sigma^1_2$ statement, therefore true in the ground model as well. It means that $x_{gen}$ is an element of a ground model class, which is a contradiction.
\end{proof}

\begin{thm}
Let $I$ be  $\mathbf{\Sigma_{2}^{1}}$ and provably ccc. $PSP_{I}(\mathbf{\Delta_{2}^{1}})$
implies that for every $B \in \mathbb{P}_I$ and for every real $z$ there exists a $\mathbb{P}_{I}$-generic over $L[z]$ in $B$.
\end{thm}

\begin{proof}
Let $D$ and $E$ be defined as in the previous proof, and $ZFC^*$ a large enough finite fragment of $ZFC$. We define another equivalence relation
which is $\Pi_{2}^{1}:$ For $x,y\in B$, $xFy$ if and only if
\[
\forall M\ \left((M\in WO)\wedge(M\models ZFC^{*})\wedge(x,y\in M)\wedge(\exists c\ M\models D(x,c)\right)\Rightarrow(M \models D(y,c))
\]
and 
\[
\forall M\ \left((M\in WO)\wedge(M\models ZFC^{*})\wedge(x,y\in M)\wedge(\exists c\ M\models D(y,c)\right)\Rightarrow(M \models D(x,c)).
\]
If $x$ and $y$ are not $\mathbb{P}_{I}$-generics over $L$, then
$xEy\Leftrightarrow xFy$. The equivalence relations $E$ and $F$ are only different
on the set of the $\mathbb{P}_{I}$-generics over $L$: under $E$, the
$\mathbb{P}_{I}$-generics over $L$ form equivalence classes that
are singletons, whereas under $F$ they form one equivalence class.

By way of contradiction, assume that in $B$ there are no $\mathbb{P}_{I}$-generics
over $L$. Then $E$ and $F$ coincide, and $E$ becomes $\Delta_{2}^{1}$.
$PSP_{I}(\mathbf{\Delta_{2}^{1}})$ then guarantees the existence of a perfect set $P$ of pairwise inequivalent elements. We continue just as before -- recall that for the Sacks $\Sigma^1_3$ generic absoluteness we only used $PSP_{I}(\mathbf{\Delta^1_2})$. We get a perfect set $P \subseteq B$ of $\mathbb{P}_I$-generics over $L$ -- a contradiction.
\end{proof}

We can finally answer problem \ref{problem} (1) for a wide class of ccc $\sigma$-ideals:

\begin{cor}
Let $I$ be  $\mathbf{\Sigma_{2}^{1}}$ and provably ccc. The following are equivalent:
\begin{enumerate}
\item $PSP_{I}(\mathbf{\Delta_{2}^{1}})$.
\item $\mathbf{\Delta_{2}^{1}}$ sets are $I$-measurable.
\end{enumerate}
\end{cor}

\begin{proof}
$(1) \Rightarrow (2)$ is the previous theorem together with proposition \ref{prop_measurability_vs_transcendence_for_ccc}. $(2) \Rightarrow (1)$ is proposition \ref{TFAE} and theorem \ref{previous_delta_1_2}.
\end{proof}

However, problem \ref{problem} (2) has a negative answer:

\begin{cor}\label{meager case}
The following are equivalent:
\begin{enumerate}
\item $PSP_{meager}(\mathbf{\Sigma_{2}^{1})}.$
\item $\mathbf{\Delta_{2}^{1}}$ sets have the Baire property.
\end{enumerate}
\end{cor}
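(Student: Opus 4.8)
The plan is to deduce both implications from results already established in the paper, after recording that the meager ideal meets all the standing hypotheses and that the abstract notions specialize to the classical ones. First I would note that the meager ideal is provably ccc (its forcing $\mathbb{P}_{meager}$ is the Cohen algebra), is $\mathbf{\Pi^1_1}$ and hence $\mathbf{\Sigma^1_2}$, and is Borel generated; that $\mathbb{P}_{meager}$-generic reals are precisely Cohen reals; and that, by the proposition on ccc Borel-generated ideals, a set is meager-measurable exactly when it has the Baire property. With these identifications in hand the cited theorems apply to $I=meager$ verbatim.

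For $(1)\Rightarrow(2)$ I would argue as follows. Since $\mathbf{\Delta^1_2}\subseteq\mathbf{\Sigma^1_2}$, restricting $PSP_{meager}(\mathbf{\Sigma^1_2})$ to $\mathbf{\Delta^1_2}$ equivalence relations immediately gives $PSP_{meager}(\mathbf{\Delta^1_2})$. Because the meager ideal is $\mathbf{\Sigma^1_2}$ and provably ccc, the previous corollary (equivalence of $PSP_I(\mathbf{\Delta^1_2})$ with $I$-measurability of $\mathbf{\Delta^1_2}$ sets) applies and shows that $PSP_{meager}(\mathbf{\Delta^1_2})$ is equivalent to meager-measurability of all $\mathbf{\Delta^1_2}$ sets, i.e. to statement $(2)$. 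This direction is therefore essentially immediate.

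For $(2)\Rightarrow(1)$ I would invoke the transcendence characterization. Assuming all $\mathbf{\Delta^1_2}$ sets have the Baire property, equivalently are meager-measurable, Theorem \ref{prop_measurability_vs_transcendence_for_ccc} applied to the meager ideal gives that for every $B\in\mathbb{P}_{meager}$ and every real $z$ there is a Cohen real over $L[z]$ lying in $B$. Taking $B$ to be the whole space, in particular for every $z$ there is a Cohen real over $L[z]$. Theorem \ref{previous_paper_thm_Sigma_1_2_meager} then delivers $PSP_{meager}(\mathbf{\Sigma^1_2})$, which is $(1)$.

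I do not expect a genuine obstacle in the argument itself: the real work was done in the earlier sections, and the only things to verify are the bookkeeping identifications above and the harmless passage between the two formulations ``for every $B$ and $z$ there is a Cohen real over $L[z]$ in $B$'' and ``for every $z$ there is a Cohen real over $L[z]$''. The conceptual content of the corollary—the negative answer to Problem \ref{problem}(2)—is read off the statement rather than produced by the proof: the condition equivalent to $PSP_{meager}(\mathbf{\Sigma^1_2})$ turns out to be the Baire property for $\mathbf{\Delta^1_2}$, which is strictly weaker than the Baire property for $\mathbf{\Sigma^1_2}$ (i.e. meager-measurability of $\mathbf{\Sigma^1_2}$ sets). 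Hence $PSP_{meager}(\mathbf{\Sigma^1_2})$ is \emph{not} equivalent to meager-measurability of $\mathbf{\Sigma^1_2}$ sets, which is the point of the corollary.
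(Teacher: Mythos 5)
Your proof is correct, and your direction $(1)\Rightarrow(2)$ is essentially the paper's (both amount to chaining the corollary preceding this one, or Theorem \ref{necessary_sigma_1_2}, with Theorem \ref{prop_measurability_vs_transcendence_for_ccc} and the identification of meager-measurability with the Baire property). For $(2)\Rightarrow(1)$, however, you take a genuinely different route. The paper's entire written proof is the recollection of Brendle's theorem (\cite{mutual_generics} 1.1): for the meager ideal, a single Cohen real over $L[z]$ already yields, inside every $B\in\mathbb{P}_{I}$, a perfect set of $\mathbb{P}_{I}*\dot{\mathbb{P}_{I}}$-generics (mutually Cohen reals) over $L[z]$; this is then fed into Theorem \ref{main_thm_Sigma_1_2 mutually_generics}, the paper's general sufficient condition for $PSP_{I}(\mathbf{\Sigma_{2}^{1}})$ in terms of perfect sets of two-step generics. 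You bypass the mutual-generics machinery altogether and instead close the argument with Theorem \ref{previous_paper_thm_Sigma_1_2_meager}, quoted from \cite{my_paper}; this is legitimate, since the paper explicitly remarks that the results of \cite{my_paper} remain valid for the stronger notion of $PSP_{I}$ used here. The trade-off: your route is shorter but treats a meager-specific theorem of the earlier paper as a black box, whereas the paper's route stays inside the framework of sections \ref{sec:sigma_1_2 transcendence to PSP} and \ref{sec:sigma_1_2 psp to transencdence} and isolates exactly where the meager ideal is special, namely Brendle's passage from one generic to a perfect set of mutually generic reals. That isolation is what makes the contrast with the null ideal transparent (Corollary \ref{null case} and the open problem \cite{mutual_generics} 2.8): there, the analogous passage is precisely what is not known, so only the weaker two-sided statement survives. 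Your closing observation about the ``negative answer'' to Problem \ref{problem}(2) is accurate, but the paper's proof strategy is what pinpoints its source.
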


\begin{proof}
Recall that for $I=meager$, if there is a $\mathbb{P}_{I}$ generic
over $L[z]$, then for every $B \in \mathbb{P}_I$ there is a perfect set $P\subseteq B$ of $\mathbb{P}_{I}*\dot{\mathbb{P}_{I}}$
generics over $L[z]$ (see
\cite{mutual_generics} 1.1).
\end{proof}

We summarize what we know about the case of the null ideal:

\begin{cor}
\label{null case}
Let $I$ be the null ideal:
\begin{enumerate}
\item $PSP_{I}(\mathbf{\Sigma_{2}^{1})}$ implies the existence of a perfect
set of random reals over $L[z]$, for any real $z$.
\item If for any real $z$ there is a perfect set of $\mathbb{P}_{I}*\dot{\mathbb{P}_{I}}$
random reals over $L[z]$, then $PSP_{I}(\mathbf{\Sigma_{2}^{1}})$.
\end{enumerate}
\end{cor}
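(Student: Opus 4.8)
The plan is to read off both parts from the two main theorems of the preceding sections, once $I$ (the null ideal) is checked to satisfy their hypotheses. First I would record the standing definability facts: the set of Borel codes of Lebesgue-null sets is $\mathbf{\Pi^1_1}$, hence in particular $I$ is $\mathbf{\Sigma^1_2}$; and the measure algebra is ccc provably in $ZFC$, so $I$ is provably ccc. Thus both Theorem \ref{necessary_sigma_1_2} and Theorem \ref{main_thm_Sigma_1_2 mutually_generics} apply. The second ingredient is the dictionary between forcing and measure: a real is $\mathbb{P}_{I}$-generic over a model $N$ exactly when it is random over $N$, and a pair $(x,y)$ is $\mathbb{P}_{I}*\dot{\mathbb{P}_{I}}$-generic over $N$ exactly when $x$ is random over $N$ and $y$ is random over $N[x]$, that is, when $x,y$ are mutually random over $N$.

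For part (1) I would apply Theorem \ref{necessary_sigma_1_2} directly. From $PSP_{I}(\mathbf{\Sigma_{2}^{1}})$ it produces, for every positive Borel $B$ and every real $z$, a perfect set $P\subseteq B$ of $\mathbb{P}_{I}$-generics over $L[z]$; taking $B=2^{\omega}$ and translating ``generic'' into ``random'' yields the required perfect set of random reals over $L[z]$.

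For part (2) I would invoke Theorem \ref{main_thm_Sigma_1_2 mutually_generics}. Its hypothesis asks, for each real $z$ and each $B\in\mathbb{P}_{I}$, for a perfect set $P\subseteq B$ of $\mathbb{P}_{I}*\dot{\mathbb{P}_{I}}$-generics over $L[z]$, that is, a perfect set of mutually random reals inside $B$. The corollary's hypothesis supplies such a perfect set but a priori only in the whole space, so the one point requiring care is transferring it into an arbitrary positive $B$; after the usual reduction in the proof of Theorem \ref{main_thm_Sigma_1_2 mutually_generics} we may assume $B\in L[z]$ and positive. Here I would use that the null ideal is homogeneous: below $B$ the measure algebra is isomorphic, via an automorphism coded in $L[z]$, to the full measure algebra, inducing a measure-preserving Borel bijection $\Phi$ of $2^{\omega}$ onto $B$ which sends reals random over $L[z]$ to reals random over $L[z]$ lying in $B$, and, through $\Phi\times\Phi$, mutually random pairs to mutually random pairs in $B$. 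Restricting a given perfect set $P$ of mutual randoms to a perfect subset $P'$ on which $\Phi$ is continuous and injective --- possible since its points avoid every null set coded in $L[z]$, including the one off which $\Phi$ fails to be defined --- the image $\Phi[P']$ is a perfect set of mutually random reals contained in $B$. Feeding this into Theorem \ref{main_thm_Sigma_1_2 mutually_generics} yields $PSP_{I}(\mathbf{\Sigma_{2}^{1}})$.

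The only genuine obstacle is this last transfer in part (2): upgrading ``a perfect set of mutual randoms exists'' to ``one exists inside every positive $B$''. Everything else is bookkeeping with results already proved, but this step is where the special structure of the null ideal --- translation invariance, homogeneity of the measure algebra and the Fubini property --- is actually used, rather than the abstract machinery of the earlier theorems.
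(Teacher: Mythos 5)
Your proposal is correct and follows essentially the same route as the paper: part (1) is read off Theorem \ref{necessary_sigma_1_2} (with generics translated to randoms), and part (2) off Theorem \ref{main_thm_Sigma_1_2 mutually_generics}, with the transfer of a perfect set of mutual randoms into an arbitrary positive Borel set as the only real work. Your detailed measure-algebra-isomorphism argument for that transfer is precisely what the paper compresses into its one-sentence appeal to the weak homogeneity of random forcing.
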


\begin{proof}
Random real forcing is weakly homogeneous, so random reals if exist, exist in every Borel set of positive measure.
\end{proof}

If the existence of a perfect set of random reals is equivalent to the existence of
a perfect set of mutually random reals then both conditions above are equivalent -- that
is an open question, see \cite{mutual_generics} 2.8.

We do not know the status of problem \ref{problem} (2) for the case of the null ideal - can we have $PSP_{null}(\mathbf{\Sigma_{2}^{1}})$ with a $\mathbf{\Sigma^1_2}$ set which is not Lebesgue measruable?

\begin{problem}
Is it consistent to have a perfect set of $\mathbb{P}_{I}*\dot{\mathbb{P}_{I}}$
random reals over $L[z]$ for any real $z$, and a $\mathbf{\Sigma^1_2}$ set which is not Lebesgue measurable? 
\end{problem}

Although the countable ideal is not ccc and hence out of the scope
of the last 2 sections, we still find ourselves very curious about understanding
$PSP_{countable}(\mathbf{\Sigma_{2}^{1})}$. In light of theorem \ref{thm_countable_ideal)} and \cite{brendle_lowe} 7.1, we conjecture that:

\begin{problem}
If for every $z$ there is $x\notin L[z]$, then $PSP_{countable}(\mathbf{\Sigma_{2}^{1})}$
.
\end{problem}

\selectlanguage{american}%

\end{onehalfspace}
\end{document}